\newtheorem{theorem}{Theorem}[section]
\newtheorem{lemma}[theorem]{Lemma}
\newtheorem{fact}[theorem]{Fact}
\newtheorem{defn}[theorem]{Definition}
\newtheorem{cor}[theorem]{Corollary}
\newcommand{\paren}[1]{\left(#1\right)}
\newcommand{\floor}[1]{\left\lfloor{#1}\right\rfloor}
\newcommand{\set}[1]{\left\{#1\right\}}
\newcommand{\setsize}[1]{\left|#1\right|}
\newcommand{\mc}[1]{\mathcal{#1}}
\newcommand{\mdrho}{\rho^{m, \Delta}} 
\newcommand{\merho}{\rho^{m, e}} 
\newcommand{\sdrho}{\rho^{s, \Delta}} 
\newcommand{\dF}{\mc F^{\Delta}_D} 
\newcommand{\ndF}[1]{\mc F^{\Delta}_{#1}} 
\newcommand{\dFF}{(\dF, \mc F)} 
\newcommand{\ndFF}[1]{(\ndF{#1}, \mc F)} 
\newcommand{\eF}{\mc F^{e}_D} 
\newcommand{\neF}[1]{\mc F^{e}_{#1}} 
\newcommand{\eFF}{(\eF, \mc F)} 
\newcommand{\neFF}[1]{(\neF{#1}, \mc F)}
\title{Partition of Sparse Multigraphs into a Forest and a Forest with Restrictions}
\author{
{{Ilkyoo Choi}}\thanks{
\footnotesize {Hankuk University of Foreign Studies, Yongin-si, Gyeonggi-do, Republic of Korea.
 E-mail: \texttt {ilkyoo@hufs.ac.kr}
 and  Discrete Mathematics Group, Institute for Basic Science (IBS), Daejeon, Republic of Korea. 
 Research 
is supported in part by the Hankuk University of Foreign Studies Research Fund and by the Institute for Basic Science (IBS-R029-C1).
}}
\and
{{Alexandr Kostochka}}\thanks{
\footnotesize {University of Illinois at Urbana--Champaign, Urbana, IL, USA.
 E-mail: \texttt {kostochk@illinois.edu}.
 Research 
is supported in part by  NSF  Grant DMS-2153507 and by NSF RTG Grant DMS-1937241.
}}
\and{{Matthew P. Yancey}}\thanks{Institute for Defense Analyses - Center for Computing Sciences, Bowie, MD, USA. 
 E-mail: \texttt{mpyancey1@gmail.com}.
}}
\begin{document}
\maketitle

\begin{abstract} 
The following measure of sparsity of multigraphs refining the maximum average degree: For $a>0$ and an arbitrary real $b$,
a multigraph $H$ is \emph{$(a,b)$-sparse} if it is  loopless and for every $A\subseteq V(H)$ with $|A|\geq 2$,
the induced subgraph $H[A]$ has at most $a|A|+b$ edges.
Forests are exactly $(1,-1)$-sparse multigraphs. It is known that the vertex set of any $(2,-1)$-sparse multigraph can be partitioned into two parts each of which induces a forest. 

For a given parameter $D$ we study for which pairs $(a,b)$ every $(a,b)$-sparse multigraph $G$ admits
a vertex partition $(V_1, V_2)$ of $V(G)$ such that $G[V_1]$ and $G[V_2]$ are forests, and in addition either (i) $\Delta(G[V_1])\leq D$ or (ii) every component of $G[V_1]$ has at most $D$ edges.
We find exact bounds on $a$ and $b$ for both types of problems (i) and (ii). 
We also consider problems of type (i) in the class of simple graphs and find exact bounds for all $D\geq 2$.
%
\end{abstract}

\section{Introduction}

\subsection{History and results}
All multigraphs in this paper are loopless.
For a multigraph $G$, we use $V(G), E(G), |G|,
\Delta(G)$, and $\delta(G)$ to denote its vertex set, edge set, order, maximum degree, and minimum degree, respectively. 
A {\em $k$-vertex} is a vertex of degree $k$. 
A {\em forest} is an acyclic graph. 

We consider 
partitions of vertex sets of  multigraphs and simple graphs
into two forests. Recall that the {\em vertex arboricity} $va(G)$ of a  multigraph $G$ is the minimum number $k$ such that there is a vertex partition $(V_1,\ldots,V_k)$ of $V(G)$ such that the subgraph $G[V_i]$ of $G$ induced by $V_i$
has no cycles for each $1\leq i\leq k$. This parameter was introduced in 1968 by Chartrand, Kronk, and Wall~\cite{1968ChKrWa}, who called it {\em point-arboricity}.

Hakimi and Schmeichel~\cite{1989HaSc} proved that the decision problem whether a given planar graph has vertex arboricity at most $2$ is NP-complete. In a series of papers~\cite{2009BoIv,2012ChRaWa,2012HuShWa,2013HuWa,2008RaWa,1971Stein,2024WaWaLi} it was proved  that  broad subclasses of planar graphs have  vertex arboricity 
 at most $2$. Mostly, these classes do not contain graphs with  high average degree. Other sufficient conditions for $va(G)\leq 2$ were proved for more general classes of sparse graphs. 
Meaningful measures of sparseness of a (multi)graph $G$ are maximum degree,{\em maximum average degree}, 
$mad(G)=\max_{H\subseteq G}\frac{2|E(H)|}{|V(H)|}$, and its refinement, 
$(a,b)$-sparseness.
For $a>0$ and an arbitrary real $b$, call
a multigraph $H$  $(a,b)$-{\em sparse} if  for every $A\subseteq V(H)$
with $|A|\geq 2$, the induced subgraph $H[A]$ has at most $a|A|+b$ edges.
By definition, forests are exactly $(1,-1)$-sparse multigraphs.

 A result of Borodin~\cite{1976Borodin} from 1976, later reproved by Bollob\' as and Manvel~\cite{1979BoMa} and Catlin and Lai~\cite{1995CaLa}, implies that every simple  graph $G$ with $\Delta(G)\leq 4$ not containing $K_5$  satisfies $va(G)\leq 2$. A corollary of this result  is that every graph $G$  with $mad(G)\leq 4$ (i.e. $(2,0)$-sparse) not containing $K_5$
 satisfies $va(G)\leq 2$. 

 A number of papers also consider partitions of multigraphs into two forests in which one or both of the forests has restrictions.
 For an integer $D$, an {\em $\dFF$-coloring} of a multigraph $G$ is a partition $(M, F)$ of $V(G)$ such that   $G[M]$ is a forest with maximum degree at most $D$ and $G[F]$ is a forest. 
  
  The case of $D=0$ (i.e. $M$ is an independent set) of $\dFF$-coloring has  attracted much attention. It was proved (see e.g.~\cite{2013BrBrKl,2017DrMoPi,2006YaYu}) that deciding whether a graph has an $\ndFF{0}$-coloring  is an NP-complete problem even in some  classes of graphs with moderate average degree. In particular, Dross,  Montassier, and  Pinlou~\cite{2017DrMoPi} proved this for the class of planar graphs, and Yang and  Yuan~\cite{2006YaYu} proved this for the class of graphs with maximum degree at most $4$.

On the other hand, for several versions of sparseness it was proved that ``sparse'' graphs admit an $\ndFF{0}$-coloring. 
Borodin and Glebov~\cite{2001BoGl} proved that every planar graph of girth at least 5
has an $\ndFF{0}$-coloring.  Dross,  Montassier, and  Pinlou~\cite{2017DrMoPi} conjectured that this holds already for planar graphs of girth at least $4$.

The above mentioned result by Borodin~\cite{1976Borodin}, Bollob\' as and Manvel~\cite{1979BoMa}, and Catlin and Lai~\cite{1995CaLa} yields that every simple graph $G$ with $\Delta(G)\leq 3$ not containing $K_4$ has an $\ndFF{0}$-coloring.

 Cranston and Yancey~\cite{2020CrYa} proved that every $(1.5,0.5)$-sparse multigraph  containing neither $K_4$ nor a special $7$-vertex graph called the {\em Moser Spindle} has an $\ndFF{0}$-coloring. They proved further that there is a finite set $\cal H$ of graphs such that every $(1.6,0.8)$-sparse simple graph not containing a subgraph in $\cal H$ also has an $\ndFF{0}$-coloring. 
Both bounds are exact.

In this paper we consider $\dFF$-colorings for $D\geq 1.$ 
Our results on such colorings are:

\begin{theorem}\label{thm:multi-degreeS}
For each $D\geq 1$, every $\paren{\frac{4D+3}{2(D+1)},\frac{1}{2(D+1)}}$-sparse
   multigraph has an $\dFF$-coloring.
\end{theorem}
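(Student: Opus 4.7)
The plan is to argue by contradiction: let $G$ be a vertex-minimal counterexample to Theorem~\ref{thm:multi-degreeS} for the given $D$. Writing $a = \frac{4D+3}{2(D+1)}$ and $b = \frac{1}{2(D+1)}$, the multigraph $G$ is $(a,b)$-sparse, admits no $\dFF$-coloring, and every $(a,b)$-sparse multigraph on fewer vertices admits one.

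\emph{Step 1 (low-degree reducibility).} I would first establish $\delta(G) \geq 3$. A $1$-vertex $v$ is reducible: apply minimality to $G - v$ to obtain $(M, F)$, then place $v \in F$ as a leaf. For a $2$-vertex $v$, I would separate the cases where $v$ is incident to a parallel edge or to two distinct simple-edge neighbors; in each case, an $\dFF$-coloring of $G - v$ extends, because whenever placing $v$ in $F$ would create a cycle or exploit a parallel edge, placing $v$ in $M$ is safe ($v$'s $M$-degree is at most $1 \leq D$, its at-most-one $M$-neighbor lies in a distinct component, and no parallel edge is activated).

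\emph{Step 2 ($3$-vertex analysis).} A $3$-vertex $v$ fails to extend directly only in narrow configurations --- for example, when a parallel-edge pattern forces both assignments to close a short cycle, or when two $F$-neighbors of $v$ share a component of $G[F]$ while $v$'s only $M$-neighbor is already saturated at $M$-degree $D$. For each failing configuration I would apply a local modification: either swap the saturated $M$-neighbor from $M$ to $F$ before inserting $v$, or delete a slightly larger neighborhood $\{v, u\}$ for a suitable neighbor $u$ and reinsert both vertices carefully. These arguments yield a list of forbidden local configurations, in particular bounding how many $3$-vertices can share a common neighbor and restricting which parallel-edge patterns may touch a $3$-vertex.

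\emph{Step 3 (discharging).} Assign initial charge $\mu(v) = d(v)$, so the total is $2|E(G)| \leq 2a|V(G)| + 2b = \frac{(4D+3)|V(G)| + 1}{D+1}$ by $(a,b)$-sparsity applied to $V(G)$. Relative to the target $2a = \frac{4D+3}{D+1}$, each $3$-vertex has deficit $\frac{D}{D+1}$, while each $k$-vertex with $k \geq 4$ has surplus at least $\frac{1}{D+1}$. I would design rules that move charge from higher-degree vertices to their $3$-vertex neighbors so that, under the Step~2 restrictions, every vertex ends with charge at least $2a$ and the total excess strictly exceeds $2b$, yielding a contradiction to $(a,b)$-sparsity. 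The main obstacle I expect is the tightness of the budget: the coefficient $a = 2 - \frac{1}{2(D+1)}$ forces a $3$-vertex to pull nearly $\frac{1}{D+1}$ from every non-$3$-vertex neighbor, so the Step~2 restrictions must be strong enough to guarantee deliverability uniformly in $D$ while keeping the parallel-edge case analysis bounded.
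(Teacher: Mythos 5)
Your Step~1 is fine: in a vertex-minimal unweighted counterexample, $1$- and $2$-vertices are indeed reducible. The fatal gap is in Step~2, and your Step~3 budget already reveals why. A $3$-vertex carries deficit $\frac{D}{D+1}$ but has only $3$ neighbors, and a $4$-vertex has surplus only $\frac{1}{D+1}$. So for $D\geq 4$ a $3$-vertex whose neighbors all have degree $4$ cannot be rescued by any transfer rule whatsoever, and for general $D$ the deliverability condition you need in Step~2 is effectively ``every $3$-vertex has a neighbor of degree on the order of $D$'' (or $3$-vertices must be nearly absent). That is a very strong global statement, and the two local repairs you propose — swapping the saturated $M$-neighbor into $F$, or deleting $\{v,u\}$ and reinserting — do not come close to proving it. The swap can create a cycle in $F$, and deleting $\{v,u\}$ just moves the same obstruction one step away: $u$ may have degree up to $D+1$ and its reinsertion faces exactly the same saturated/$F$-cycle conflict, now with no bound on how many layers you must peel. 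You never close this loop.

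The paper avoids this wall by strengthening the induction hypothesis rather than the case analysis. It proves the theorem for weighted multigraphs, assigning each vertex a \emph{capacity} $c(v)\in\{0,\ldots,D+1\}$ and a potential $\mdrho(v)=2D+1+2c(v)$, $\mdrho(e)=2(D+1)$, with the inductive claim that every critical weighted multigraph on $\geq 3$ vertices has $\mdrho(V(G))\leq -2$. The point of the capacity is that when you delete a vertex and recolor, you can ``charge'' its neighbors by lowering their capacities, so constraints propagate to the smaller instance; this is exactly the mechanism your ``reinsert carefully'' hand-wave is missing. Combined with a gap lemma (a lower bound on $\mdrho(S)$ for all proper $S$ with $|S|\geq 2$, itself proven by this capacity-passing trick), one gets Lemma~\ref{max degree d3}: in a weighted critical graph every $3$-vertex has $c(u)\leq 1$. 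That bound instantly flips the sign of the $3$-vertex charge, and the discharging in Section~\ref{sec-dFF-coloring} becomes trivial — no transfer rules at all, every vertex is already nonpositive. In the unweighted framework this structural lemma has no analogue (unweighted vertices all have $c=D+1$), which is why your plan cannot go through as written.

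Two smaller omissions: you need to justify that a vertex-minimal counterexample inherits $(a,b)$-sparsity for the final contradiction (it does, since induced submultigraphs stay sparse, but it should be said), and the paper in fact handles $D=1$ separately via the $\eFF$-coloring argument (an $\ndFF{1}$-coloring equals an $\neFF{1}$-coloring); your proposal is silent on $D=1$, which happens to be where the discharging bind is mildest but the structural lemma of Section~\ref{sec-dFF-coloring} is unavailable.
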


\begin{theorem}\label{thm:simple-degreeS}
For each $D\geq 2$, every $\paren{\frac{6D+5}{3(D+1)},\frac{2}{3(D+1)}}$-sparse
   simple graph has an $\dFF$-coloring.
\end{theorem}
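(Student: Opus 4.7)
The plan is to argue by minimum counterexample combined with discharging, leveraging the simple-graph hypothesis to obtain the improved sparsity constant compared with Theorem~\ref{thm:multi-degreeS}. Let $G$ be a counterexample of minimum order, with ties broken by minimum $|E(G)|$. Set $a=\frac{6D+5}{3(D+1)}=2-\frac{1}{3(D+1)}$ and $b=\frac{2}{3(D+1)}$; since $a<2$, the sparsity bound forces average degree strictly less than $4$, so low-degree vertices are plentiful.

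I would first establish the easy reducibility $\delta(G)\geq 3$. A vertex $v$ with $d(v)\leq 2$ admits an extension of any $\dFF$-coloring of $G-v$: place $v$ in $F$ when it has at most one neighbor per $F$-component, and otherwise place $v$ in $M$, where it is nearly isolated, so that the $M$-degree-$D$ condition is automatically met (since $v$ contributes at most one new edge to each of its at most two $M$-neighbors). Next, I would derive structural restrictions on how 3-vertices can cluster. The target reducibility statements are roughly: an adjacent pair---or more generally a short chain---of 3-vertices is reducible; delete the offending subset $S$, apply the minimality hypothesis to $G-S$ (which is still $(a,b)$-sparse), and extend the coloring by a case analysis on the $M$/$F$-memberships of the bounded number of $S$-to-$V\setminus S$ neighbors, occasionally recoloring an $M$-neighbor of $M$-degree exactly $D$ over to $F$ to free up the $M$-side. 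The simple-graph hypothesis enters crucially in this extension: in a multigraph, parallel edges between 3-vertices would obstruct the placement, and ruling these out is what upgrades the sparsity constant beyond Theorem~\ref{thm:multi-degreeS}.

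For the discharging phase, give each vertex $v$ charge $w(v)=d(v)-2a$, so $\sum_v w(v)=2|E(G)|-2a|V(G)|\leq 2b$. A 3-vertex has $w(v)=-\frac{3D+1}{3(D+1)}<0$, whereas any vertex of degree at least $4$ has $w(v)\geq 4-2a>0$ (and in fact $4-2a=b$). Design local discharging rules---with rates of denominator $3(D+1)$---so that high-degree vertices send charge along edges to their 3-vertex neighbors, and the structural constraints established above force every vertex to end with charge at least some fixed $\epsilon>0$ independent of $|V(G)|$. This bounds $|V(G)|$ from above by $2b/\epsilon$, reducing the proof to finitely many small graphs that can be checked directly; alternatively, a strengthened accounting produces an immediate contradiction.

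The main obstacle is pinning down a sufficiently tight family of reducible configurations around 3-vertices (and perhaps near 4-vertices saturated on the $M$-side) so that the discharging closes with essentially no slack: the per-vertex slack in the sparsity bound is only $\frac{2}{3(D+1)}$, and it shrinks with $D$. The most delicate part is extending an $\dFF$-coloring when a potential $M$-neighbor already has $M$-degree exactly $D$, which requires carefully choreographed local recolorings---swapping a vertex between $M$ and $F$ without destroying either forest or the degree cap. The simple-graph hypothesis should repeatedly play a role in ruling out coincidences among neighborhoods that would break these recoloring moves, aligning with the fact that Theorem~\ref{thm:simple-degreeS} enjoys a strictly larger sparsity constant than Theorem~\ref{thm:multi-degreeS}.
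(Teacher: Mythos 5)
Your plan shares the paper's skeleton---minimum counterexample, reducibility around $3$-vertices, discharging---but it is missing the central device that makes the reducibility phase sound, and the discharging plan is underspecified in exactly the places where the constants are tight. The paper reasons about $\dFF$-\emph{critical} graphs (\Cref{thm:simple-degreeSc}) and formulates everything for \emph{weighted} graphs carrying a vertex capacity $c(v)$ and a potential $\sdrho(A)=\sum_{v\in A}\bigl(3D+2+3c(v)\bigr)-3(D+1)\setsize{E(G[A])}$. The capacity is what makes ``delete $S$ and extend'' sound. When you remove a cluster $S$ of $3$-vertices and color $G-S$ by minimality, the coloring may place boundary vertices in $M$ at $M$-degree exactly $D$, and the local recoloring you suggest (moving such a vertex to $F$) can create an $F$-cycle or cascade; nothing in your framework controls this. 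The paper's fix is to \emph{lower capacities on the boundary} when building the smaller instance, forcing any coloring of the shrunk graph to be automatically extendable; the gap lemmas (\Cref{prelim gap lemma}, \Cref{common gap lemma}) guarantee these capacity reductions are always affordable, and that is where the sparsity hypothesis actually enters.

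With capacities in hand, the discharging is almost an afterthought: \Cref{max degree d3} shows every $3$-vertex in a minimum weighted critical graph has $c(u)\leq 1$, and then $ch(u)=\sdrho(u)-d(u)\sdrho(e)/2\leq -1$ for \emph{every} vertex, so $\sdrho(V(G))\leq -\setsize{V(G)}\leq -3$ with no discharging rules at all. Your unweighted charges $w(v)=d(v)-2a$ have the opposite sign convention and far less slack: a $3$-vertex starts near $-1$ while a $4$-vertex starts at only $b=\frac{2}{3(D+1)}$, so a $3$-vertex whose three neighbors have degree $4$ can receive at most $3b=\frac{2}{D+1}<1$ and cannot be brought to positive charge. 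Ruling out that and similar configurations (or enabling long cascades of charge) would require precisely the reducibility machinery your proposal leaves at the sketch stage. The fallback of bounding $\setsize{V}\leq 2b/\epsilon$ and ``checking small graphs'' is also not a complete argument, since both the bound and the candidate small graphs depend on $D$. The paper's potential/capacity framework is what converts ``$3$-vertices are structurally constrained'' into ``$3$-vertices have small capacity, hence charge $\leq -1$,'' sidestepping all of these difficulties at once.
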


Both bounds are exact. Note that when $D=1$, the corresponding sparseness  of Theorem~\ref{thm:simple-degreeS} does not hold as there are more sparse ``critical'' graphs.

Dross,  Montassier, and  Pinlou~\cite{2017DrMoPi} showed that planar graphs of girth at least $4$ admit an $\ndFF{5}$-coloring.
This was improved by Feghali and \v{S}\'amal~\cite{2024FeSa} to an $\ndFF{3}$-coloring.
Liu and Wang~\cite{2022LiWa} showed that planar graphs of girth at least $4$ and no chorded $6$-cycle admit an $\ndFF{2}$-coloring.
We use the above result to give Corollary \ref{LW-reproved}, which is an independent proof of Liu and Wang's result.

\bigskip
We also consider a similar problem with stronger restrictions on one of the forests.
An {\em $\eFF$-coloring} of a multigraph $G$ is a partition $(M, F)$ of $V(G)$ such that $G[M]$ is a forest in which every component has at most $D$ edges and $G[F]$ is a forest. For $D\in \{0,1\}$, an $\eFF$-coloring is simply an
 $\dFF$-coloring, but for $D\geq 2$, an $\eFF$-coloring is significantly more restrictive than an $\dFF$-coloring.

 Cranston and Yancey~\cite{2021CrYa} considered the similar problem when not only the components of $G[M]$ are bounded, but also $F$ must be an independent set. They found an exact bound on the maximum average degree of a (simple) graph that guarantees the existence of such a partition.
  
Our  result on $\eFF$-coloring is:

\begin{theorem}\label{thm:multi-orderS}
(i) For each odd $D\geq 1$, every $\paren{\frac{4D+3}{2D+2},\frac{1}{2(D+1)}}$-sparse
   multigraph has an $\eFF$-coloring.

(ii) For each even $D\geq 2$, every $\paren{\frac{4D+1}{2D+1},\frac{1}{2D+1}}$-sparse
   multigraph has an $\eFF$-coloring.  
\end{theorem}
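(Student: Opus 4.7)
The plan is to argue by minimum counterexample, following the architecture that drives Theorem~\ref{thm:multi-degreeS}: first establish a list of reducible configurations via local extension arguments, then close the argument by discharging. Let $G$ be an $(a,b)$-sparse multigraph (with $(a,b)$ as in part (i) or (ii)) of minimum order that does not admit an $\eFF$-coloring, and, subject to this, of maximum edge count. Clearing denominators in the sparseness inequality gives $2(D+1)|E(H)|\leq(4D+3)|A|+1$ for odd $D$ and $(2D+1)|E(H)|\leq(4D+1)|A|+1$ for even $D$, so assigning each vertex an initial charge that is a linear combination of its degree and a constant, calibrated to these two inequalities, makes the total charge at most a tiny constant, and it suffices to redistribute charge so that every vertex ends up nonnegative.

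The bulk of the work lies in the reducibility step. Standard extension arguments should give the usual easy facts: $G$ is connected, $\delta(G)\geq 2$, every edge has multiplicity at most $2$, no $2$-vertex is incident to a multi-edge, and long threads of consecutive $2$- or $3$-vertices do not occur. Each such reduction follows by deleting the offending piece, taking an $\eFF$-coloring of the rest by minimality, and extending. The extension is where the component-size condition on $M$ plays the central role: placing a vertex $v$ into $M$ merges the $M$-components at all $M$-coloured neighbours of $v$ into one component whose edge count must remain at most $D$. Thus whether a configuration is reducible depends not only on the colours at its boundary but on the sizes of the $M$-components touching it, and I anticipate that this is where the parity split arises --- the extremal bad subgraphs are expected to be short cycles whose internal edges fit perfectly into an $M$-component of size $D$, and this packing differs between odd and even $D$, producing the shift between $\tfrac{4D+3}{2(D+1)}$ and $\tfrac{4D+1}{2D+1}$.

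With a sufficient list of reducible configurations in hand, the discharging phase should be comparatively short: send a prescribed amount of charge from each vertex of degree $\geq 4$ (or $\geq 5$) to each adjacent $2$- or $3$-vertex, the amount chosen to exactly offset the deficit of a low-degree vertex in the densest configuration still permitted by the reducibility list. The absence of the forbidden configurations then forces every vertex to end with nonnegative charge, contradicting the global charge bound. The main obstacle, and the technical heart of the proof, is the reducibility analysis for $\eFF$-colorings: because the component-size constraint couples colour choices at non-adjacent vertices, verifying reducibility of even moderately sized configurations requires a case analysis over the sizes of $M$-components at the boundary; moreover, since the two bounds in parts (i) and (ii) are claimed to be tight, the list must be pushed exactly far enough for the discharging to close without any slack.
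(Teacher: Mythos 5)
Your high-level architecture (minimum counterexample, reducibility, discharging) matches the paper's, but the proposal is missing the two devices that actually make the argument go through, and its explanation of the parity split is not the one the paper uses.

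The first missing idea is the reformulation in terms of critical graphs together with a \emph{potential function}. The paper does not directly prove the sparseness statement; it proves that every $\eFF$-critical multigraph $G$ satisfies $\merho(V(G))\leq -2$, where $\merho$ is a carefully chosen linear functional on vertex subsets, and the result follows. To make this induction work one must prove \emph{gap lemmas}: lower bounds on $\merho(S)$ for \emph{every} proper subset $S$ of a minimum counterexample. This is precisely the tool needed to handle the difficulty you correctly flag, namely that putting $v\in M$ merges $M$-components at several nonadjacent boundary vertices. Your plan to ``delete the offending piece, take an $\eFF$-coloring of the rest by minimality, and extend'' does not go through as stated, because after deletion the remaining graph need not be $(a,b)$-sparse in a way that yields a usable coloring, and because the component-size constraint couples the coloring of the rest to the coloring of the deleted piece. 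The paper handles this coupling by passing to \emph{weighted} multigraphs: each vertex carries a weight $w(v)\in\{1,\dots,D+2\}$ (equivalently a \emph{capacity} $c(v)=D+2-w(v)$), and an $\eFF$-coloring must satisfy $\sum_{v\in C}w(v)\leq D+1$ on every component $C$ of $G[M]$. When a configuration is deleted, the boundary vertices have their capacities reduced by the amount of budget that the deleted part would consume, and the gap lemmas guarantee the smaller weighted instance still satisfies the inductive hypothesis. Without this bookkeeping mechanism, the reducibility step is not rigorous.

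The second gap concerns the parity split. Your heuristic — short cycles that pack ``perfectly'' into an $M$-component of size $D$ — is not what is happening. The split arises because for even $D$ there is a strictly more efficient gadget for forcing high weight: attaching a $K_4$ with vertex weights $1,\tfrac{D}{2}+1,\tfrac{D}{2}+1,\tfrac{D}{2}+1$ to a vertex $x$ effectively raises $w(x)$ by $\tfrac{D}{2}+2$, and this is cheaper (in the sparseness sense) than $\tfrac{D}{2}+2$ applications of the $2$-fundamental gadget since $2D+3<2(\tfrac{D}{2}+2)$. This forces a differently shaped potential function $\merho$ for even $D$ (piecewise linear in $c(v)$ with a break at $c(v)=D/2$), which in turn is why the even case needs a separate weak gap lemma, a cut-edge structural analysis, and a strengthened gap lemma. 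Finally, the discharging for odd $D$ is not the usual ``send charge from high-degree vertices to low-degree vertices'' scheme: the paper identifies a set $T$ of $3$-vertices of minimal weight that have \emph{positive} initial charge, sends charge $\tfrac{1}{3}$ from each $T$-vertex to its neighbors, and then, when the resulting charges are only marginally nonpositive, runs a secondary structural analysis of $G[V(G)\setminus T]$ that \emph{directly constructs} an $\eFF$-coloring, rather than deriving a numeric contradiction. Your proposed discharging rule is pointed in the opposite direction and would not close the argument.
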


Both parts of Theorem~\ref{thm:multi-orderS} are sharp. 
Note that  the bound on sparseness for odd $D$ is the same as in Theorem~\ref{thm:multi-degreeS}, but
for even $D$ it is different.

At the Barbados 2019 Graph Theory Workshop, Hendrey, Norin, and Wood asked for the largest value of $f_{a,b}$ such that any graph $G$ with $mad(G) < f_{a,b}$ has a vertex  partition $(A, B)$ of $V(G)$ such that $mad(G[A]) < a$ and $mad(G[B]) < b$.
Observe that $mad(G[A]) < \frac{2D}{D+1}$ if and only if every component of $G[A]$ has at most $D-1$ edges. 
Thus, Theorem \ref{thm:multi-orderS} answers the analogous question for multigraphs when $b=2$ and $a < 2$.

\subsection{Proof ideas and structure of the paper}

We will prove our results in terms of the sizes of critical multigraphs, which are formally stronger than
Theorems~\ref{thm:multi-degreeS}--\ref{thm:multi-orderS}.
We call  a multigraph  $\dFF$-{\em critical} (respectively, $\eFF$-{\em critical}) if it does not admit an $\dFF$-coloring (respectively, an $\eFF$-coloring), but every proper subgraph does. In these terms, the counterparts of Theorems~\ref{thm:multi-degreeS}--\ref{thm:multi-orderS} are as follows.

\begin{theorem}\label{thm:multi-degreeSc}
For each $D\geq 1$, every $\dFF$-{critical} multigraph $G$ on at least three vertices satisfies $|E(G)|\geq \frac{(4D+3)|V(G)|+2}{2(D+1)}$.
\end{theorem}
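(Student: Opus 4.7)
My plan is to use a minimum-counterexample argument driven by a potential function. Set
\[
\rho(H) := 2(D+1)|E(H)| - (4D+3)|V(H)|,
\]
so the conclusion becomes $\rho(G) \geq 2$. I will suppose $G$ is a $\dFF$-critical multigraph with $|V(G)| \geq 3$ and $\rho(G) \leq 1$, chosen with $|V(G)|$ minimum. The first step is to show $\delta(G) \geq 3$: if $v$ has $d(v) \leq 2$, take an $\dFF$-coloring $(M, F)$ of $G - v$ (which exists by criticality) and extend it. A vertex with at most one edge can always be placed as an isolated or leaf vertex in $F$; for $d(v)=2$, a brief case split (double edge to one neighbor versus two single edges; membership of neighbors in $M$ or $F$) shows that $v$ always fits into at least one part without creating a cycle or violating the $\Delta \leq D$ bound on $M$. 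This contradicts criticality.

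Next, I will analyze how $\dFF$-colorings of $G - v$ for $d(v) = 3$, and of $G - uv$ for a multi-edge $uv$, can fail to extend. Blocking such an extension at a degree-$3$ vertex $v$ requires that every placement of $v$ either creates a cycle (in $F$ or in $M$) or pushes some $M$-neighbor over the degree bound $D$; I will enumerate these blocking patterns, using Kempe-type swaps along $F$-paths and $M$-paths to kill all but the most rigid ones. Analogously, a multi-edge $uv$ of multiplicity $\geq 2$ forces $u$ and $v$ onto opposite sides of every coloring, which together with the $\Delta \leq D$ bound and the forest constraint on $M$ pushes both endpoints to have large degree. Together these should give structural constraints on $G$ of the form: every degree-$3$ vertex has a neighbor of degree $\geq 5$ (or satisfies an even stronger local condition), and multi-edges live only among high-degree vertices.

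Finally, I will set up a discharging argument with initial charges $ch(v) := (D+1) d(v) - (4D+3)$, so that $\sum_v ch(v) = \rho(G)$. These charges equal $-D$, $1$, and $\geq D+2$ for vertices of degree $3$, $4$, and $\geq 5$ respectively. The structural constraints from the previous step should justify rules like: each vertex of degree $\geq 5$ sends a fixed amount of its charge to its degree-$3$ neighbors, with extra refinements around multi-edges and $D$-saturated $M$-vertices, so that after discharging every vertex has nonnegative final charge and at least two units of positive charge survive globally. This would contradict $\rho(G) \leq 1$ and finish the proof.

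The main obstacle is Step~2, the structural characterization. Multi-edges and the $\Delta \leq D$ constraint interact in subtle ways (a multi-edge is simultaneously a forbidden ``cycle'' in $F$ and an obstruction to placement in $M$), and the Kempe recolorings must be set up with care so that the resulting forbidden configurations are tight enough for the discharging in Step~3 to close---especially because the bound $\frac{4D+3}{2(D+1)}$ is claimed to be sharp for every $D \geq 1$, leaving essentially no slack in the argument.
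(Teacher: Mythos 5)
Your overall skeleton (minimum counterexample, potential function, $\delta(G)\geq 3$, then discharging) matches the paper's, and your $\delta(G) \geq 3$ argument and charge computations are correct. However, Step~2 is where your proposal breaks down, and I don't think the route you sketch can be pushed through.

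The structural claim you hope to prove---``every degree-$3$ vertex has a neighbor of degree $\geq 5$''---is too weak and also not the right target. Your own charge values make the problem visible: a degree-$3$ vertex has charge $-D$, a degree-$4$ vertex has charge $+1$, and a degree-$5$ vertex has charge only $D+2$. The sharpness construction $H^{m,\Delta}_k(D)$ is built from $2$-fundamental gadgets, and in that construction each high-degree hub $x_{2i}$ is adjacent (with multiplicity) to roughly $2D$ gadget vertices of degree $3$, each needing to receive charge $D$; the hub's charge is only barely enough, and a single leftover deficit at the chain vertices $x_{2i+1}$ has to be absorbed along a path. So even if you prove the claim, no local rule sending a ``fixed amount'' from degree-$\geq 5$ vertices can close the books without tracking a much finer invariant. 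Moreover, Kempe-style recolorings across $F$-paths and $M$-paths are not the tool the situation calls for: the obstructions in this problem are not alternating-path obstructions but ``accumulated-degree-in-$M$'' obstructions, and what you actually need is a way to summarize the effect of a pendant substructure (a gadget) on the coloring problem at its attachment vertex.

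The missing idea is precisely that summarization. The paper reformulates the problem on \emph{weighted} multigraphs: each vertex carries a weight $w(v)\in\{1,\dots,D+2\}$ (equivalently a capacity $c(v)=D+2-w(v)$), and an $\dFF$-coloring must satisfy $w(v)+|N(v)\cap M|\leq D+1$ for $v\in M$. The potential is then vertex-dependent, $\rho(v)=2D+1+2c(v)$, $\rho(e)=2(D+1)$, and the theorem is the statement $\rho(V(G))\leq -2$ for critical weighted $G$. This one move replaces your intended ``enumerate blocking patterns / Kempe swap'' analysis with a clean induction: reductions that delete a vertex or contract a set of vertices are compensated by lowering capacities of their neighbors, so the induction lives entirely inside the weighted class. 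The heart of the proof is a \emph{gap lemma} (Lemma~4.1 of the paper): every proper subset $S$ with $2\leq|S|\leq |V(G)|-1$ has $\rho(S) > \rho_0 + (2+\rho_*)$. Proving this requires a delicate construction where one colors $G[S]$ after lowering the capacity of a boundary vertex by about $\lceil \rho(S)/\alpha\rceil$, colors the complement after lowering the capacity of its boundary neighbor by the complementary amount, and patches the two colorings along the unique crossing edge; this is where the sharp constants come from. With the gap lemma in hand, the reducibility lemmas (for instance: a degree-$3$ vertex with capacity $\geq 1$ is not in a multi-edge; for $D\geq 2$, a degree-$3$ vertex has capacity $\leq 1$) become routine subset-potential computations, and the final discharging is trivial---there are \emph{no} discharging rules at all; one just checks that every vertex of a weighted minimum counterexample has $ch(u)<-1$, forcing $\rho(V(G))< -|V(G)|\leq -3$, a contradiction.

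Two smaller points. First, your proposal doesn't distinguish $D=1$ from $D\geq 2$: the paper proves the $D\geq 2$ case in Section~6 but defers $D=1$ to Section~7, where $\ndFF{1}$-coloring is handled via the $\eFF$-machinery because the ``$c(u)\leq 1$'' conclusion of Lemma~6.1 gives no slack when $D=1$. Second, your potential $\rho(H)=2(D+1)|E(H)|-(4D+3)|V(H)|$ has the opposite sign from the paper's, which is harmless, but note that the paper's sign convention makes ``$\rho(A)$ large'' mean ``$A$ is easy to color'' and makes the gap lemma and submodularity manipulations more transparent.

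In short: your Step~1 and the charge arithmetic in Step~3 are fine, but Step~2 as described (Kempe swaps, degree-threshold structure) is not the right mechanism, and the proposal leaves the real work undone. The key missing ingredients are the capacity reformulation and the gap lemma on subset potentials.
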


\begin{theorem}\label{thm:simple-degreeSc}
For each $D\geq 2$, every $\dFF$-{critical} simple graph $G$ on at least three vertices satisfies $|E(G)|\geq \frac{(6D+5)|V(G)|+3}{3(D+1)}$.
\end{theorem}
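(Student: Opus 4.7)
The plan is a minimum-counterexample argument followed by discharging, in parallel with the strategy I would follow for Theorem~\ref{thm:multi-degreeSc} but with the added subtlety forced by the simple-graph hypothesis. Suppose the theorem fails and let $G$ be a counterexample minimizing $|V(G)|+|E(G)|$. Then $G$ is $\dFF$-critical, has at least three vertices, and
$$3(D+1)\,|E(G)| \;\leq\; (6D+5)\,|V(G)| + 2.$$
Assign each vertex $v$ the initial charge $\mu(v):=3(D+1)d(v)-2(6D+5)$; the displayed inequality gives $\sum_{v\in V(G)} \mu(v) \leq 4$. Note $\mu(v)=-3D-1$ if $d(v)=3$, $\mu(v)=2$ if $d(v)=4$, and $\mu(v)\geq 3D+5$ if $d(v)\geq 5$, so the task of discharging is to redistribute enough positive charge onto the $3$-vertices, using structural constraints, to push $\sum_{v}\mu(v)$ strictly above $4$ and obtain a contradiction.

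I would first develop a battery of reducibility lemmas from the criticality of $G$. For a vertex $v\in V(G)$, or more generally a small connected set $S\subseteq V(G)$, minimality yields an $\dFF$-coloring $(M',F')$ of $G-S$, and one must show that no such coloring extends to $S$. Consequences I would aim to establish include: $\delta(G)\geq 3$; every $3$-vertex has all neighbors of degree at least $4$; no two $3$-vertices share a common low-degree neighbor; and more refined bounds on the ``cluster'' of $3$- and $4$-vertices around any $3$-vertex. Each of these is proved by starting from $(M',F')$ and exploiting the flexibility between placing an uncolored vertex into $M$ (requiring both the forest condition and $\Delta\leq D$) and into $F$ (requiring only the forest condition), together with local swap moves between the two classes when a direct extension fails.

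With the reducibility in hand, I would design discharging rules that route positive charge from vertices of degree at least $4$ (and especially from vertices of degree at least $5$, which carry charge $\Omega(D)$) to their $3$-vertex neighbors, with amounts calibrated so that after discharging every vertex has non-negative final charge and the total strictly exceeds $4$, contradicting the inequality above. The calibration must be done so the configurations excluded in Step~1 correspond exactly to the cases where a $3$-vertex cannot collect enough charge from its surroundings under the rules.

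The main obstacle will be Step~1. Because $M$ must simultaneously be a forest and satisfy $\Delta\leq D$, and because the simple-graph hypothesis precludes the parallel-edge tricks available in the proof of Theorem~\ref{thm:multi-degreeSc}, ruling out a low-degree configuration $S$ requires careful case analysis that tracks, for each $v\in S$ and each neighbor $u$ of $S$, whether $u\in M'$ and, if so, the $M'$-degree of $u$ (which controls whether $u$ can absorb an extra edge while staying within the $\Delta\leq D$ constraint). Matching the precise list of forbidden configurations to the bookkeeping of the discharging rules so that the very small slack of $4$ on the right-hand side is consumed is, in my estimation, the crux of the argument.
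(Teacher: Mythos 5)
Your plan is a high-level outline that identifies the correct general strategy---minimum counterexample, reducibility lemmas, discharging---but it misses the structural machinery that actually makes the proof go through, and one of your proposed reducibility lemmas is unlikely to be provable by the means you describe.

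The paper's proof is not a discharging argument in the traditional sense you envision. It works throughout with a \emph{weighted} relaxation of the problem: each vertex $v$ carries a weight $w(v)\in\{1,\ldots,D+2\}$ (equivalently, a capacity $c(v)=D+2-w(v)$), and an $\dFF$-coloring must satisfy $w(v)+|N(v)\cap M|\le D+1$ for $v\in M$. The minimum counterexample is taken over these weighted graphs. This is crucial for exactly the obstacle you flag: when you delete a configuration $S$ containing a $3$-vertex $x$ and then try to put $x$ back into $M$, you may raise the $M$-degree of a neighbor $u$ to $D+1$; the delete-and-extend argument you sketch simply fails there, and ``local swap moves'' will not rescue it in general. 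The paper instead \emph{lowers the capacity of $u$ before deleting $S$}, so the residual graph already ``remembers'' that $u$ must leave room for $x$. The price of this relaxation is that one needs a \emph{gap lemma} (Lemma~4.1, $\rho(S)>\rho_0+(2+\rho_*)$ for every proper $S$ with $|S|\ge 2$) controlling the potential of subsets, because the reducibility arguments may return a ``bad'' subgraph rather than an extendable coloring.

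Concretely, the paper's analogue of your lemma ``every $3$-vertex has all neighbors of degree at least $4$'' is Lemma~5.2, which reads: if $xy\in E$ with $d(x)=d(y)=3$, then $w(x)+w(y)\ge D+2$. This \emph{allows} adjacent $3$-vertices in the weighted minimum counterexample (provided their weights are large), and indeed the tight constructions $H^{s,\Delta}_k(D)$ are riddled with mutually adjacent $3$-vertices coming from the pendant $K_4$ gadgets. So the unweighted statement you propose would have to be proved about the unweighted minimum counterexample specifically, and the $\Delta\le D$ constraint blocks the delete-and-extend proof as noted. With weights, the key reducibility lemma becomes Lemma~6.1: every $3$-vertex $u$ has $c(u)\le 1$, i.e.\ $w(u)\ge D+1$. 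In the paper's potential accounting ($ch(u)=\rho(u)-d(u)\rho_e/2$), this immediately forces $ch(u)<-1$ for $3$-vertices, and $ch(u)\le -1$ for all other vertices by the degree computation---so the final discharging step is \emph{trivial}: there are no charge-transfer rules at all, just a per-vertex bound summing to $\le \rho_*$. Your plan to calibrate transfer rules from $4$- and $5$-vertices to $3$-vertices is chasing a much harder version of the problem: in the unweighted picture a $3$-vertex has deficit $3D+1$, a $4$-vertex has surplus only $2$, and a $3$-vertex sitting in a pendant $K_4$ gadget has two degree-$3$ neighbors from which it can receive nothing; I do not see how your rules would balance.

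Finally, a minor misdiagnosis: you attribute the added difficulty for simple graphs to losing ``parallel-edge tricks.'' The paper treats $\mdrho$ and $\sdrho$ with identical arguments (Sections~5 and~6 are simultaneous); the only changes are the constant $\alpha$ ($2$ vs.\ $3$) and $\rho_*$ ($-2$ vs.\ $-3$). The simple-graph hypothesis is not used to avoid parallel edges in the reductions---rather it enters only in verifying that the reduced graph $G'$ constructed in Lemma~3.5 remains simple. The difficulty you anticipate is real, but its resolution is the weighted/potential framework, not a simple-graph-specific device.
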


\begin{theorem}\label{thm:multi-orderSc}
(i) For each  odd $D\geq 1$, every $\eFF$-{critical} multigraph $G$ on at least three vertices satisfies
$|E(G)|\geq \frac{(4D+3)|V(G)|+2}{2(D+1)}$.

(ii) For each even $D\geq 2$,  every $\eFF$-{critical} multigraph $G$ on at least three vertices satisfies
$|E(G)|\geq \frac{(4D+1)|V(G)|+2}{2D+1}$.
\end{theorem}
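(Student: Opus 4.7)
The plan is to prove both parts simultaneously by the discharging method, with the parity of $D$ entering only in the final arithmetic. Suppose toward contradiction that $G$ is an $\eFF$-critical multigraph on at least three vertices that violates the claimed edge lower bound. Equivalently, the average degree of $G$ is strictly less than $\frac{4D+3}{D+1}$ in the odd case and less than $\frac{2(4D+1)}{2D+1}$ in the even case; both thresholds sit just below $4$, so the reducibility phase should force $\delta(G)\ge 4$ and strongly constrain the local structure around degree-$4$ vertices.

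The core of the proof is the reducibility analysis. For each vertex $v$, criticality yields an $\eFF$-coloring $(M,F)$ of $G-v$ that fails to extend to $v$. Non-extendability forbids simultaneously that (a) placing $v\in F$ creates a cycle, so $v$ has two neighbors inside a single tree of $G[F]$, and (b) placing $v\in M$ either creates a cycle in $G[M\cup\{v\}]$ or produces a component containing $v$ of more than $D$ edges. First I would leverage this to show $\delta(G)\ge 4$, using short swap arguments that recolor one or two vertices to absorb a vertex of degree $\le 3$. Then I would derive strong local constraints for degree-$4$ vertices: either they are incident to a multi-edge, or their neighborhoods contain many high-degree vertices, or adjacent degree-$4$ vertices appear only in a short list of restricted configurations. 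The parity of $D$ enters here: when $D$ is odd, swaps that move a degree-$2$ path between $M$ and $F$ consume an even amount of the component-size budget and leave no residual slack, whereas when $D$ is even one extra edge of slack is available via a differently shaped swap; this single edge accounts for the drop in extremal density from $\frac{4D+3}{2D+2}$ to $\frac{4D+1}{2D+1}$.

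The discharging step should then be routine. Assign each vertex $v$ the initial charge $\mu(v)=d(v)$, so that $\sum_v\mu(v)=2|E(G)|$. Using the forbidden configurations obtained above, I would design rules in which each vertex of degree $\ge 5$ transfers a carefully tuned amount of charge to each adjacent degree-$4$ vertex, and possibly to certain degree-$4$ vertices at distance $2$. The target is that every vertex ends with charge at least $\frac{4D+3}{D+1}$ in part (i) and at least $\frac{2(4D+1)}{2D+1}$ in part (ii), with a global surplus of $2$ reproducing the additive $+2$ in the statement. Summing contradicts the assumed upper bound on $|E(G)|$.

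The main obstacle I anticipate is the non-local nature of the component-size constraint during reducibility. Unlike in the $\dFF$-problem treated in Theorem~\ref{thm:multi-degreeSc}, a neighbor of $v$ in $M$ may lie in a component of $G[M]$ already saturated at exactly $D$ edges, in which case even a simple merge is infeasible and more elaborate multi-vertex recolorings are required. Correctly cataloguing when and where saturation arises, and matching the resulting case split to the parity of $D$, is where the bulk of the technical work should lie.
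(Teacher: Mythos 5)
Your high-level plan (reducibility followed by discharging) is the right genre, but there is a fatal structural error at the very start: the claim that $\delta(G)\ge 4$ cannot hold. The edge bounds you are trying to prove give average degree just \emph{below} $4$ (namely $\tfrac{4D+3}{D+1}$ and $\tfrac{2(4D+1)}{2D+1}$), whereas $\delta(G)\ge 4$ would force $|E(G)|\ge 2|V(G)|$, which is strictly stronger and is not attained by the sharpness examples $H^{m,e}_k(D)$. Those extremal multigraphs are in fact full of $3$-vertices (every $2$-fundamental gadget contributes two of them), so any proof must accommodate $3$-vertices rather than eliminate them. Consequently your discharging plan, which transfers charge only to degree-$4$ vertices, would break immediately: $3$-vertices already sit below the target and you have no rule that feeds them.

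The deeper issue, which you gesture at in your last paragraph but do not resolve, is that the component-size constraint of $\eFF$-colorings is non-local, so the usual ``swap one or two vertices and re-extend'' reducibility arguments do not suffice. The paper handles this by lifting the whole problem to a weighted setting (vertex weights/capacities that encode how much of the $D+1$ budget a vertex already carries) and by proving \emph{gap lemmas} — lower bounds $\merho(S)$ for every proper vertex subset $S$, not just $S=V(G)$. These gap lemmas are what make the reducibility step work: when one deletes a small configuration and the smaller graph turns out to have no coloring, the induced critical subgraph has a too-low potential, contradicting the gap lemma. Your proposal has no analogue of this machinery, and the potential function is also where the parity of $D$ genuinely enters: for even $D$ the paper defines a \emph{different} potential $\merho$ (see \eqref{nn1}), justified by a $K_4$-based gadget that is ``more efficient'' at consuming budget, and then proves separate weak and strong gap lemmas for that case. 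Your explanation of the parity gap in terms of ``degree-$2$ path swaps consuming an even amount of budget'' does not match this and I do not see how to make it rigorous without the potential framework. Finally, even granting the structural lemmas, the odd-$D$ discharging in the paper is not ``routine'': it requires identifying the set $T$ of light $3$-vertices, proving $T$ is independent and that light $4$-vertices have at most three $T$-neighbors (Lemma~\ref{multi-4vx-3333o}), and then a delicate endgame analysis of short cycles inside $G[T']$.
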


Theorem \ref{thm:simple-degreeSc} provides a short proof of Liu and Wang's result.

\begin{cor}[\cite{2022LiWa}]\label{LW-reproved}
Planar graphs of girth at least $4$ and no chorded $6$-cycle admit a $\ndFF{2}$-coloring.
\end{cor}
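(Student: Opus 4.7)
The plan is to derive Corollary \ref{LW-reproved} by combining Theorem \ref{thm:simple-degreeSc} (with $D = 2$) with a Euler-type upper bound that exploits the hypothesis of no chorded $6$-cycle.

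I would argue by contradiction. Suppose $G_0$ is a planar graph of girth at least $4$ with no chorded $6$-cycle that admits no $\ndFF{2}$-coloring, and let $G \subseteq G_0$ be a $\ndFF{2}$-critical subgraph. Planarity, girth at least $4$, and the absence of chorded $6$-cycles are all preserved under taking subgraphs, so $G$ inherits all three properties. Graphs on at most two vertices trivially have a $\ndFF{2}$-coloring, so $|V(G)| \geq 3$, and Theorem \ref{thm:simple-degreeSc} gives
\[
|E(G)| \;\geq\; \frac{17\,|V(G)| + 3}{9}.
\]

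The main step is to prove a matching upper bound from the planar embedding. Fix an embedding and let $f_4$ denote the number of $4$-faces. The key observation is that no two $4$-faces share a boundary edge: if $4$-faces bounded by cycles $u, v, a, b$ and $u, v, c, d$ shared the edge $uv$, then girth at least $4$ would force $\{a, b\} \cap \{c, d\} = \emptyset$ (any coincidence creates a triangle through $u$ or $v$), and the sequence $u, b, a, v, c, d, u$ would be a genuine $6$-cycle with chord $uv$, contradicting the hypothesis. Hence each edge lies on at most one $4$-face, so $4 f_4 \leq |E(G)|$. Combining this with $2 |E(G)| \geq 4 f_4 + 5(|F(G)| - f_4)$ (every face has length at least $4$) yields $|F(G)| \leq 9\,|E(G)|/20$. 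Plugging into Euler's formula $|F(G)| = |E(G)| - |V(G)| + 2$ gives
\[
|E(G)| \;\leq\; \frac{20\,(|V(G)| - 2)}{11}.
\]
Chaining the two bounds yields $187\,|V(G)| + 33 \leq 180\,|V(G)| - 360$, i.e., $7\,|V(G)| \leq -393$, which is absurd.

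The main obstacle I anticipate is the tacit assumption that face boundaries are simple cycles, which holds automatically if $G$ is $2$-connected with $\delta(G) \geq 3$ but may fail otherwise: two $4$-faces sharing more than one edge would require a degree-$2$ vertex on the shared boundary, evading the chorded-$6$-cycle argument. I would handle this through standard reductions on the critical graph $G$: $G$ is connected, and bridges or cut-vertices yield component-wise reductions contradicting criticality; degree-$2$ vertices in a $\ndFF{2}$-critical graph similarly admit short reductions (suppressing $v$ and recoloring, with a small case analysis on whether the two neighbors lie in $M$ or $F$) that rule them out. Once $G$ is reduced to the $2$-connected case with minimum degree at least $3$, the face-count argument above applies verbatim.
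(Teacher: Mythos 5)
Your proposal is essentially the paper's proof: pass to a $\ndFF{2}$-critical subgraph, invoke \Cref{thm:simple-degreeSc} with $D=2$ for the lower bound $|E| \geq \tfrac{17|V|+3}{9}$, show no two $4$-faces share an edge (via girth $\geq 4$ and no chorded $6$-cycle), derive $|F|\leq 9|E|/20$ and hence $|E| \leq \tfrac{20}{11}(|V|-2)$ from Euler's formula, and obtain the contradiction $1.88 < 1.82$. The only cosmetic difference is that you obtain $|F| \leq 9|E|/20$ by a direct edge/face count ($4f_4 \leq |E|$ plus the sum-of-face-lengths inequality) whereas the paper phrases the same bound as a one-rule discharging on faces; and you are somewhat more explicit about needing $\delta(G)\geq 3$ (and ruling out bridges/cut vertices) so that $4$-faces are bounded by genuine $4$-cycles, whereas the paper establishes $\delta(G)\geq 3$ and then takes the "no adjacent $4$-faces" claim as standard.
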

\begin{proof}
Suppose $G$ is a simple, planar graph with neither a $3$-cycle nor a chorded $6$-cycle that is $\ndFF{2}$-critical.
If $G$ has a vertex $v$ of degree at most $2$, then an $\ndFF{2}$-coloring of $G-v$ can be extended to $G$ by adding $v$ to $M$ if all neighbors of $v$ are in $F$, and adding $v$ to $F$ otherwise.
This contradicts the criticality of $G$, so $\delta(G) \geq 3$.
Because $G$ is a simple graph without $3$-cycles, chorded $6$-cycles, or $2$-vertices, there are no adjacent faces with length $4$.
We now apply discharging.
Give each face an initial charge equal to its length. 
Each face with length at least $5$ gives charge $1/9$ to each face that shares an edge with it (with multiplicity, should it share more than one edge).
By the above, each face ends with charge at least $40/9$.
Let $n,e,f$ denote the number of vertices, edges, and faces of $G$.
By Euler's formula and the above we have $n-e+f=2$ and $f \leq 9e/20$, which implies $e \leq \frac{20}{11}(n-2) < 1.82n$.
This contradicts Theorem \ref{thm:simple-degreeSc}, which states that $e \geq \frac{17n+3}{9} > 1.88n$.    
\end{proof}

To prove Theorems~\ref{thm:multi-degreeSc}--\ref{thm:multi-orderSc}, it will be convenient to use the language of
{\em potentials}. For example, in Theorem~\ref{thm:multi-degreeSc} every nonempty $A\subseteq V(G)$ has potential
$\rho(A)=(4D+3)|A|-(2D+2)|E(G[A])|$ and the statement in these terms is that for every $\dFF$-{critical} multigraph $G$,
$\rho(V(G))\leq -2$. Moreover, to facilitate induction, it will be convenient to introduce concepts such as capacities and weights of  vertices in order to modify potentials of these vertices. 

In the next section we present constructions showing sharpness of all our bounds. In \Cref{sec-premise} we set up the proofs of lower bounds in (refined versions of) Theorems~\ref{thm:multi-degreeSc}--\ref{thm:multi-orderSc}
and discuss the common steps in their proofs.
We gather the gap lemmas in Section~\ref{sec-gaplems} and collect common reducible configurations in \Cref{sec-common-reducibles}.
In \Cref{sec-dFF-coloring} we prove \Cref{thm:multi-degreeSc} when $D\geq 2$ and \Cref{thm:simple-degreeSc} at the same time. 
The case when $D=1$ for \Cref{thm:multi-degreeSc} is postponed to \Cref{sec-eFF-coloring}. 
In \Cref{sec-eFF-coloring} we prove \Cref{thm:multi-orderSc}; the proof for odd $D$ needs an additional argument so it is in its own subsection.





\section{Constructions}\label{constructions section}

\def\xspace{1}
\def\yspace{0.75}

\newcommand{\twoFunGadget}[2]{
\begin{scope}[rotate=#2]
    \draw (#1)++(20:1) node[graynode] (w1){};
    \draw (#1)++(-20:1) node[graynode] (w2){};
    \foreach \x [count=\xi from 1] in {1,...,2}                  \draw  (#1)--(w\xi);
    \draw (w1) to [bend left] (w2);
    \draw (w1) to [bend right] (w2);
\end{scope}
}

\def\figMultiDegree{
\draw (0,0) node [graynode,label=below:$x_0$] (c){};
 \foreach \i [count=\xi from 1] in {1,...,7}
     \draw (c)++(\xi*1.5,0) node[graynode] (c\xi){};
\foreach \i [count=\xi from 1] in {1,...,4}
     \draw (c)++(\xi*1.5,0) node[graynode, label=below:$x_\xi$] (){};
\draw (c)++(5*1.5,0) node[graynode, label=below:$x_{2k-1}$] (){};
\draw (c)++(6*1.5,0) node[graynode, label=below:$x_{2k}$] (){};
\draw (c)++(7*1.5,0) node[graynode, label=below:$x_{2k+1}$] (){};

 \foreach \i/\j in {c/c1, c2/c3, c6/c7}{
    \draw (\i) to [out=20, in=180-20] (\j);
    \draw (\i) to [out=-20, in=180+20] (\j);
 }
 \foreach \i/\j in {c1/c2, c3/c4, c5/c6}{
    \draw (\i) to (\j);
 }
  \draw (c4)++(1.5/2,0) node[](){$\cdots$};

 \foreach \i in {2,4,6}{
    \twoFunGadget{c\i}{90+45}
    \draw (c\i) ++ (0,1) node[](){$\ldots$};
    \draw (c\i) ++ (0,1.5) node[](){$D$};
    \twoFunGadget{c\i}{90-45}
 }
\twoFunGadget{c}{90+90}
\draw (c) ++ (90+45-2.5:1) node[]() {\reflectbox{$\ddots$}};
\draw (c) ++ (90+45+2.5:1.75) node[](){$D+1$};
\twoFunGadget{c}{90}

\twoFunGadget{c7}{90-90}
\draw (c7) ++ (90-45+2.5:1) node[](){$\ddots$};
\draw (c7) ++ (90-45-2.5:1.75) node[](){$D+1$};
\twoFunGadget{c7}{90}

}

\def\Xx{1}

\def\figMultiEdges{
\foreach\z in {0,5,13}{
\draw (0,0) node [graynode] (c){};
 \foreach \i in {0,\Xx}{
    \foreach \j in {0,\Xx}{
        \draw (c)++(\i+\z,\j) node[graynode] (x\z\i\j){};
     }
 }
 \foreach \i in {0,\Xx}{
 \foreach \j in {0,\Xx}{
    \foreach \ii in {0,\Xx}{
    \foreach \jj in {0,\Xx}{
        \draw (x\z\i\j) to (x\z\ii\jj);
 }}}}

 \twoFunGadget{x\z\Xx\Xx}{-45+90+45}
 \draw (x\z\Xx\Xx) ++ (-45+90+2.5:1) node[](){$\ddots$};
 \draw (x\z\Xx\Xx) ++ (-20+90:1.75) node[](){$\floor{\frac{D+1}{2}}$};
 \twoFunGadget{x\z\Xx\Xx}{-45+90-45}

 \twoFunGadget{x\z0\Xx}{135+45}
 \draw (x\z0\Xx) ++ (135-2.5:1) node[](){\reflectbox{$\ddots$}};
 \draw (x\z0\Xx) ++ (20+90:1.75) node[](){$\floor{\frac{D+1}{2}}$};
 \twoFunGadget{x\z0\Xx}{135-45}

 \twoFunGadget{x\z00}{45+180+45}
 \draw (x\z00) ++ (45+180-2.5:1) node[](){$\ddots$};
 \twoFunGadget{x\z00}{45+180-45}

 \twoFunGadget{x\z\Xx0}{45+270+45}
 \draw (x\z\Xx0) ++ (45+270+2.5:1) node[](){\reflectbox{$\ddots$}};
 \twoFunGadget{x\z\Xx0}{45+270-45}
}
\foreach\z in {5,13}{
 \draw (x\z00) ++ (45+180+20:1.75) node[](){$\floor{\frac{D-2}{2}}$};
}
 \draw (x000) ++ (45+180+20:1.75) node[](){$\floor{\frac{D}{2}}$};

\foreach\z in {0,5}{
 \draw (x\z\Xx0) ++ (45+270-20:1.75) node[](){$\floor{\frac{D-1}{2}}$};
}
 \draw (x13\Xx0) ++ (45+270-20:1.75) node[](){$\floor{\frac{D+1}{2}}$};

 \draw (3-0.35,\Xx/2+0.03) node[graynode] (x1){};
 \draw (3+0.35, \Xx/2+0.03) node[graynode] (x2){};
 \draw (x1) to [bend left] (x2);
 \draw (x1) to [bend right] (x2);
 \draw (x0\Xx0) to [out=45, in=180](x1);
 \draw (x2) to [out=0, in=135] (x500);

 \draw (8-0.35,\Xx/2+0.03) node[graynode] (x1){};
 \draw (8+0.35, \Xx/2+0.03) node[graynode] (x2){};
 \draw (x1) to [bend left] (x2);
 \draw (x1) to [bend right] (x2);
 \draw (x5\Xx0) to [out=45, in=180](x1);
 \draw (x2) -- ++ (0.7,0);
 
 \draw (11-0.35,\Xx/2+0.03) node[graynode] (x1){};
 \draw (11+0.35, \Xx/2+0.03) node[graynode] (x2){};
 \draw (x1) to [bend left] (x2);
 \draw (x1) to [bend right] (x2);
 \draw (x1300) to [out=90+45, in=0](x2);
 \draw (x1) -- ++ (-0.7,0);

 \draw (19/2, \Xx/2+0.03) node[] () {$\cdots$};
 \draw (x1310) node[graynode,label=below right:$v_k$](){};
 \draw (x510) node[graynode,label=below right:$v_2$](){};
 \draw (x010) node[graynode,label=below right:$v_1$](){};

}

\newcommand{\threeFunGadget}[2]{
\begin{scope}[rotate=#2]
    \draw (#1)++(20:1) node[graynode] (w1){};
    \draw (#1)++(-20:1) node[graynode] (w2){};
    \draw (#1)++(0:1.5) node[graynode] (w3){};
    \foreach \x [count=\xi from 1] in {1,...,3}                  \draw  (#1)--(w\xi);
    \draw (w1) -- (w2) -- (w3) -- (w1);
\end{scope}
}

\def\figSimpleDegrees{
\draw (0,0) node [graynode,label=below:$z_1$] (z1){};
 \draw (z1) 
 -- ++ (180-15:2) node[graynode,label=right:$x_1$](x1){} ;
 \draw (z1) 
 -- ++ (180+15:2) node[graynode,label=right:$y_1$](y1){} -- (x1);

 \draw (z1) 
 -- ++ (15:2) node[graynode,label=above:$x_2$](x2){} ;
 \draw (z1) 
 -- ++ (-15:2) node[graynode,label=below:$y_2$](y2){} -- (x2) -- ++ (-15:2) node[graynode,label=below:$z_2$](z2){} -- (y2);

 \draw (z2) -- ++ (15:1);
 \draw (z2) -- ++ (-15:1);

 \draw (6.6,0) node[graynode,label=below:$z_{k-1}$] (zk-1){} -- ++ (15:2) node[graynode,label=above:$x_k$](xk){} ;
 \draw (zk-1) 
 -- ++ (-15:2) node[graynode,label=below:$y_k$](yk){} -- (xk) -- ++ (-15:2) node[graynode,label=below:$z_k$](zk){} -- (yk);

 \draw (zk-1) -- ++ (180+15:1);
 \draw (zk-1) -- ++ (180-15:1);

  \draw (z2) ++ (1.4,0) node[](){$\cdots$};

    \threeFunGadget{x1}{135+45}
    \draw (x1) ++ (135-2.5:1) node[](){\reflectbox{$\ddots$}};
    \draw (x1) ++ (135:1.75) node[](){$D+1$};
    \threeFunGadget{x1}{135-45}

    \threeFunGadget{y1}{-135+45}
    \draw (y1) ++ (-135-2.5:1) node[](){$\ddots$};
    \draw (y1) ++ (-135:1.75) node[](){$D+1$};
    \threeFunGadget{y1}{-135-45}

    \threeFunGadget{z1}{90+35}
    \draw (z1) ++ (90-2.5:1.2) node[](){$\cdots$};
    \draw (z1) ++ (90:1.75) node[](){$D$};
    \threeFunGadget{z1}{90-35}

    \threeFunGadget{z2}{90+35}
    \draw (z2) ++ (90-2.5:1.2) node[](){$\cdots$};
    \draw (z2) ++ (90:1.75) node[](){$D$};
    \threeFunGadget{z2}{90-35}

    \threeFunGadget{zk-1}{90+35}
    \draw (zk-1) ++ (90-2.5:1.2) node[](){$\cdots$};
    \draw (zk-1) ++ (90:1.75) node[](){$D$};
    \threeFunGadget{zk-1}{90-35}

    \threeFunGadget{zk}{0+45}
    \draw (zk) ++ (1,0.1) node[](){$\vdots$};
    \draw (zk) ++ (0:1.75) node[](){$D+1$};
    \threeFunGadget{zk}{0-45}

}

In this section, we present families of graphs demonstrating the sharpness of Theorems~\ref{thm:multi-degreeSc}, \ref{thm:simple-degreeSc}, and \ref{thm:multi-orderSc}.

The operation of ``adding a $2$-fundamental gadget'' to a vertex $v$ means to add two vertices $x$ and $y$ and  four edges $vx, vy, xy, xy$. 
Note that $x$ and $y$ cannot be in the same color class in an $\dFF$-coloring or an $\eFF$-coloring. 
In particular, given either a $\dFF$-coloring or a $\eFF$-coloring  $(M, F)$ of the graph, if $D+1$ $2$-fundamental gadgets have been added to $v$, then $v\in F$ and if $D$ $2$-fundamental gadgets have been added to $v$ and $v\in M$, then the neighbors of $v$ not in those $D$ $2$-fundamental gadgets must be in $F$. 

\noindent
{\bf Sharpness example for \Cref{thm:multi-degreeSc}: multigraphs and $\dFF$-coloring}

For an integer $k \geq 0$, construct a multigraph $H^{m, \Delta}_k(D)$ as follows.
Start with $2k+2$ vertices ${x_0, x_1, \ldots, x_{2k+1}}$ where $g(x_i)=\begin{cases}
    D+1 & \mbox{if $i\in\set{0, 2k+1}$,}\\
    D & \mbox{if $i$ is even and $i\in\set{1, \ldots, 2k}$,}\\
    0 & \mbox{if $i$ is odd and $i\in\set{1, \ldots, 2k}$.}
\end{cases}$

For each $i\in\set{0, \ldots, 2k+1}$, add $g(x_i)$ $2$-fundamental gadgets to $x_i$.
Note that a total of $Dk+2(D+1)$ $2$-fundamental gadgets have been introduced. 
For each $i \in \set{0, \ldots, 2k}$, $x_ix_{i+1}$ is a multi-edge if $i$ is even and a simple edge if $i$ is odd. 
See \Cref{fig-multi-degree}.
Observe that $H^{m, \Delta}_k(D)$ has $2k+2+2(Dk+2D+2)$ vertices and $3k+2+4(Dk+2D+2)$ edges; in other words, $H^{m, \Delta}_k(D)$ has $\frac{(4D+3)\setsize{V(H^{m, \Delta}_k(D))}+2}{2(D+1)}$ edges.  

We prove that $H^{m, \Delta}_k(D)$ has no $\dFF$-coloring by contradiction; suppose there is an $\dFF$-coloring $(M, F)$.
If $x_{2i} \in F$, then by its incident multi-edge  $x_{2i+1}$ must be in $M$.
If $x_{2i+1} \in M$ and $i < k$, then because $D$ $2$-fundamental gadgets have been added to $x_{2i+2}$, 
we have $x_{2i+2} \in F$.
As $D+1$ $2$-fundamental gadgets have been added to $x_0$, 
we have $x_0 \in F$.
Applying the above logic for $i\in\set{0,1,\ldots,k-1}$ we get that $x_{2k} \in F$.
As $D+1$ $2$-fundamental gadgets have been added to $x_{2k+1}$, 
we have $x_{2k+1} \in F$.
But then $F$ contains a multi-edge $x_{2k}x_{2k+1}$, which is a contradiction.


\begin{figure}
\centering
\begin{tikzpicture}
[scale=1,auto=left, 
blacknode/.style={circle,draw,fill=black,minimum size = 6pt,inner sep=0pt}, 
graynode/.style={circle,draw,fill=gray!30,minimum size = 6pt,inner sep=0pt}
]
 \begin{scope}[xshift=0cm]\figMultiDegree\end{scope}
\end{tikzpicture}
\caption{Sharpness example for \Cref{thm:multi-degreeSc}.}
\label{fig-multi-degree}
\end{figure}

\noindent
{\bf Sharpness example for \Cref{thm:multi-orderSc}: multigraphs and $\eFF$-coloring}

An {\em $M^*$-gadget} on $v_1$ is a complete graph on four vertices $v_1, v_2, v_3, v_4$ where every vertex has $\floor{\frac{D}{2}}$ $2$-fundamental gadgets, and if $D$ is even then remove a $2$-fundamental gadget from $v_1$ and if $D$ is odd then add an additional $2$-fundamental gadget on each of $v_3$ and $v_4$. 
Note that an $M^*$-gadget has $4D+2$ vertices and $8D+2$ edges when $D$ is even and $4D+4$ vertices and $8D+6$ edges when $D$ is odd. 
An {\em $(M^*, M^*)$-gadget} from $u$ to $v$ is obtained by starting with an $M^*$-gadget on $v$, removing a $2$-fundamental gadget on some vertex $w\neq v$, and adding a $2$-cycle $t_1t_2$ and two more edges $t_1u,t_2w$. 

Construct a multigraph $H^{m,e}_k(D)$ as follows. 
Start with $k$ vertices $v_1, \ldots, v_k$ where there is an $M^*$-gadget on $v_1$ and an $(M^*, M^*)$-gadget from $v_i$ to $v_{i+1}$ for each $i\in\set{1, \ldots, k-1}$, and add a $2$-fundamental gadget to $v_k$. 
See \Cref{fig-multi-edges}.
Observe that $H_k^{m, e}(D)$ has $(4D+4)k+2$ vertices and $(8D+6)k+4$ edges when $D$ is even and $(4D+2)k+2$ vertices and $(8D+2)k+4$ edges when $D$ is odd; in other words, $H_k^{m, e}(D)$ has $\frac{(4D+1)\setsize{V(H_k^{m, e}(D))}+2}{2D+1}$ edges when $D$ is even and $\frac{(4D+3)\setsize{V(H_k^{m, e}(D))}+2}{2D+2}$ when $D$ is odd. 




We claim that $H^{m,e}_k(D)$ has no $\eFF$-coloring $(M, F)$. 
We remark that in every copy of $K_4$, there must be two vertices in each of $M$ and $F$.
The $M^*$-gadget on $v_1$ forces $v_1$ in $M$; moreover, all neighbors of $v_1$ not in the $M^*$-gadget must be in $F$. 
For $i\in\set{1, \ldots, k-1}$, since $v_i$ is in $M$ the $(M^*, M^*)$-gadget forces $v_{i+1}$ to be in $M$.
Yet, the component containing $v_k$ has $D+1$ edges, which is a contradiction, so $H^{m,e}_k(D)$ has no $\eFF$-coloring. 
Note that $D+1=\min\set{\frac{D}{2}+\frac{D}{2}+1,\frac{D-1}{2}+\frac{D+1}{2}+1}$.






\begin{figure}
\centering
\begin{tikzpicture}
[scale=1,auto=left, 
blacknode/.style={circle,draw,fill=black,minimum size = 6pt,inner sep=0pt}, 
graynode/.style={circle,draw,fill=gray!30,minimum size = 6pt,inner sep=0pt}
]
 \begin{scope}[xshift=0cm]\figMultiEdges\end{scope}
\end{tikzpicture}
\caption{Sharpness example for \Cref{thm:multi-orderSc}.}
\label{fig-multi-edges}
\end{figure}

\noindent
{\bf Sharpness example for \Cref{thm:simple-degreeSc}: simple graphs and $\dFF$-coloring}

The operation of ``adding a $3$-fundamental gadget'' to a vertex $v$ means to introduce a completely new $3$-cycle $xyz$ and add edges $vx,vy,vz$. 
(This is the same as introducing a new $K_4$ and identifying $v$ with a vertex in the new $K_4$.)
Note that $x,y,z$ cannot all be in the same color class in an $\dFF$-coloring (or an $\eFF$-coloring), so some vertex in $x,y,z$ must be in the same color class with $v$. 
In particular, given a ($\dFF$- or $\eFF$-)coloring  $(M, F)$ of the graph, if $D+1$ $3$-fundamental gadgets have been added to $v$, then $v\in F$ and if $D$ $3$-fundamental gadgets have been added to $v$ and $v\in M$, then the neighbors of $v$ not in those $D$ $3$-fundamental gadgets must be in $F$. 

Construct a (simple) graph $H^{s,\Delta}_k(D)$ as follows. 
Start with vertices $\set{x_i, y_i, z_i: i\in\set{1, \ldots, k}}$ where $x_i, y_i, z_i$ is a $3$-cycle for all $i$ and $z_ix_{i+1}, z_iy_{i+1}$ are edges for $i\in\set{1, \ldots, k-1}$.
Let $g(v)=\begin{cases}
    D+1 & \mbox{if $v\in\set{x_1, y_1, z_k}$,}\\
    D & \mbox{if $v\in\set{z_1, \ldots, z_{k-1}}$,}\\
    0 & \mbox{otherwise.}
\end{cases}$
For each $v\in\set{x_1, y_1, z_1, \ldots, z_{k}}$, add $g(v)$ $3$-fundamental gadgets to $v$. 
Note that a total of $D(k-1)+3(D+1)$ $3$-fundamantal gadgets have been introduced. 
Observe that $H_k^{s, \Delta}(D)$ has $3k+3(Dk+2D+3)$ vertices and $5k-2+6(Dk+2D+3)$ edges; in other words, $H_k^{s, \Delta}(D)$ has $\frac{(6D+5)\setsize{V(H_k^{s, \Delta}(D))}+3}{3D+3}$ edges. 

We claim $H^{s,\Delta}_k(D)$ has no $\dFF$-coloring: suppose to the contrary an $\dFF$-coloring $(M, F)$ exists. 
For $i\in\set{1, \ldots, k-1}$,
if $x_i, y_i$ are in $F$, then $z_i$ must be in $M$, which forces $x_{i+1}, y_{i+1}$ to be in $F$ since $D$ $3$-fundamental gadgets have been added to $z_i$. 
We know $x_1, y_1$ are in $F$ because $D+1$ $3$-fundamental gadgets have been added to each of $x_1$ and $y_1$, 
so $x_k, y_k$ are forced to be in $F$.
Yet $z_k$ is also forced to be in $F$ since $D+1$ $3$-fundamental gadgets have been added to $z_k$,
so $x_k, y_k, z_k$ is a $3$-cycle in $F$.
Therefore $H^{s,\Delta}_k(D)$ has no $\dFF$-coloring. 

\begin{figure}
\centering
\begin{tikzpicture}
[scale=1,auto=left, 
blacknode/.style={circle,draw,fill=black,minimum size = 6pt,inner sep=0pt}, 
graynode/.style={circle,draw,fill=gray!30,minimum size = 6pt,inner sep=0pt}
]
 \begin{scope}[xshift=0cm]\figSimpleDegrees\end{scope}
\end{tikzpicture}
\caption{Sharpness example for \Cref{thm:simple-degreeSc}.}
\label{fig-simple-degrees}
\end{figure}

\section{Premise}
\label{sec-premise}

We generalize the prior concept of $\dFF$-coloring and $\eFF$-coloring to weighted graphs, which will make our arguments easier.
For a graph $G$, let $w:V(G) \rightarrow \{1, \ldots, D+2\}$ denote a weighting of the vertices.
For a weighted (multi-)graph $G$, we say that a vertex partition $(M, F)$ of $V(G)$ is an \emph{$\dFF$-coloring} if $G[M]$ and $G[F]$ are forests and for each $v \in M$ we have $w(v) + |N(v) \cap M| \leq D+1$.
We consider multi-edges as a 2-cycle, and therefore multi-edges cannot exist in $G[M]$ or $G[F]$.
Similarly, 
$(M, F)$ is an \emph{$\eFF$-coloring} if $G[M]$ and $G[F]$ are forests and each component $C$ of $G[M]$ satisfies $\sum_{v \in C} w(v) \leq D+1$.
Note that both definitions reduce to the original definitions when $w \equiv 1$.

Observe that if $w(v) = D+2$, then in any $\dFF$-coloring or $\eFF$-coloring of $G$ it must be that $v \in F$ (a fact that we will use many times below), so it does not make sense to allow larger weights.
We define the \emph{capacity} of a vertex as $c(v) = D+2 - w(v)$.
While this notation is redundant, some of our future statements can be expressed using capacity without reference to $D$, and therefore they are easier to read.
For example, the above definition of a $\dFF$-coloring translates to $|N[v] \cap M| \leq c(v)$ for each $v \in M$ (where $N[v] = \{v\} \cup N(u)$).

Our proofs of Theorems \ref{thm:multi-degreeSc}, \ref{thm:simple-degreeSc}, \ref{thm:multi-orderSc} all begin by considering some minimum counterexample $G$.
Some of our lemmas that describe the structure of $G$ use the exact same argument for each of Theorem \ref{thm:multi-degreeSc}, Theorem \ref{thm:simple-degreeSc}, and Theorem \ref{thm:multi-orderSc}.
Instead of repeating those words, we will condense this manuscript by presenting those general arguments in this section.
First, observe that since $G$ is a minimum counterexample, $G$ is ($\dFF$- or $\eFF$-)critical and therefore connected.
Next we establish a condition on the minimum degree.

\begin{lemma}\label{2-ve}
If $v \in V(G)$, then $d(v) \geq 2$.
Moreover, if $d(v) = 2$, then $c(v) = 0$.
\end{lemma}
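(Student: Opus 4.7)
The plan is to contradict criticality of $G$ by extending a coloring of $G - v$ whenever $v$ violates the claimed bound. Suppose for contradiction that some $v \in V(G)$ has either $d(v) \leq 1$, or $d(v) = 2$ with $c(v) \geq 1$. Since $G$ is critical (for whichever of $\dFF$- or $\eFF$-coloring is at issue), the subgraph $G - v$ admits a valid coloring $(M', F')$, and the goal is to place $v$ into one of the two parts so that the resulting partition of $V(G)$ is still valid, contradicting the fact that $G$ has no such coloring.

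If $d(v) \leq 1$, I put $v \in F'$: with at most one neighbor, $G[F' \cup \set{v}]$ stays acyclic, the lone incident edge (if any) is simple so no multi-edge enters $G[F]$, and placing $v$ on the $F$-side carries no weight or component-size obligation. For $d(v) = 2$ with $c(v) \geq 1$, I split on whether the two edges at $v$ form a multi-edge or go to two distinct neighbors. In the multi-edge case $vu$, the requirement that multi-edges cannot lie inside $G[M]$ or $G[F]$ forces $v$ to the side opposite $u$; then $\set{v}$ is an isolated component on that side, so the only nontrivial check is either $\setsize{N[v] \cap M} = 1 \leq c(v)$ for an $\dFF$-coloring or $w(v) = D+2-c(v) \leq D+1$ for an $\eFF$-coloring, and both hold by $c(v) \geq 1$. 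If $v$ has two distinct neighbors $u_1, u_2$, I first attempt $v \in F$; the only obstruction is that $u_1, u_2$ already lie in a common component of $G[F']$, but in that case both belong to $F'$ and neither to $M'$, so putting $v \in M$ leaves $\set{v}$ an isolated component in $G[M]$ and the same singleton-component calculation validates the extension.

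The argument is a routine case analysis, and the only mild obstacle is to verify that every branch works simultaneously for $\dFF$- and $\eFF$-coloring. This is smooth because in each non-trivial case $v$ ends up as a singleton component on its chosen side, at which point both coloring constraints reduce to $w(v) \leq D+1$, i.e. $c(v) \geq 1$. The case $d(v) = 0$ is also automatically excluded, either by connectivity of the critical graph $G$ or directly by the $F$-placement argument above.
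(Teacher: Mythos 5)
Your proof is correct and uses essentially the same idea as the paper: delete $v$, take a coloring $(M,F)$ of $G-v$, and extend it back to $G$ to contradict criticality. The paper streamlines the case analysis by first observing in one stroke that $v$ must have at least two neighbors (counted with multiplicity) in $F$ — which immediately gives $d(v)\geq 2$ and, when $d(v)=2$, that $v$ has no neighbors in $M$, so $(M\cup\{v\},F)$ fails only if $c(v)=0$ — whereas you recover the same information through the explicit multi-edge/distinct-neighbors subcases.
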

\begin{proof}
By criticality, $G-v$ has a ($\dFF$- or $\eFF$-)coloring $(M, F)$.
If $d_F(v)\leq 1$, then we can extend the coloring to $G$ by adding $v$ to $F$.
As $G$ is critical, the coloring does not extend, so $v$ has at least two neighbors (with multiplicity) in $F$.
This proves the first part of the lemma.
If $d(v) = 2$, then $v$ has no neighbors in $M$.
Because $(M \cup \{v\}, F)$ is not a ($\dFF$- or $\eFF$-)coloring of $G$, it follows that $c(v) = 0$.
\end{proof}

Our proofs of Theorems \ref{thm:multi-degreeSc}, \ref{thm:simple-degreeSc}, \ref{thm:multi-orderSc} will involve manipulation of a potential function.
A potential function $\rho$ represents a weighting of the vertices and edges, from which a weighting of vertex subsets is inherited.  
That is, for $A \subseteq V(G)$, we define $\rho(A) = \sum_{v \in A} \rho(v) - \sum_{e \in E(G[A])} \rho(e)$.

\begin{defn}
The specific potential functions that we will use are the following.
\begin{enumerate}
    \item We define vertex weighting $\mdrho(u)=2D+1+2c(u)$ and edge weighting $\mdrho(e) = 2(D+1)$.  
    \item We define vertex weighting $\sdrho(u)=3D+2+3c(u)$ and edge weighting $\sdrho(e) = 3(D+1)$.  
    \item For odd $D$, we define $\merho \equiv \mdrho$. 
    \item For even $D$, we define vertex and edge weightings 
    \begin{equation}\label{nn1}
\merho(v)=\begin{cases}
    2D+2c(v)  & \mbox{if  $c(v)\leq \frac{D-2}{2},$}\\
 2D+2c(v)-1  & \mbox{if  $c(v)\geq D/2,$} 
\end{cases}
\quad\mbox{and}\quad
\merho(e)=
    2D+1.
\end{equation}
\end{enumerate}
    
\end{defn}

Recall that the weighted coloring reduces to the unweighted version when we set $c \equiv D+1$.
The technical version of what we will prove is the following:

\begin{theorem}\label{potential theorem}
\begin{enumerate}
    \item  For each $D\geq 1$, every $\dFF$-{critical} multigraph $G$ on at least three vertices satisfies $\mdrho(V(G)) \leq -2$; this implies Theorem \ref{thm:multi-degreeSc}.
    \item   For each $D\geq 2$, every $\dFF$-{critical} simple graph $G$  on at least three vertices satisfies $\sdrho(V(G)) \leq -3$; this implies Theorem \ref{thm:simple-degreeSc}.
    \item  For each odd $D\geq 1$, every $\eFF$-{critical} multigraph $G$  on at least three vertices satisfies $\merho(V(G)) \leq -2$; this implies part (i) of Theorem \ref{thm:multi-orderSc}.
    \item For each even $D\geq 2$, every $\eFF$-{critical} multigraph $G$  on at least three vertices satisfies $\merho(V(G)) \leq -2$; this implies part (ii) of Theorem \ref{thm:multi-orderSc}.
\end{enumerate}    
\end{theorem}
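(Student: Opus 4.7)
First, each of the four potential inequalities in Theorem~\ref{potential theorem} translates directly into the corresponding edge-count inequality in Theorems~\ref{thm:multi-degreeSc}--\ref{thm:multi-orderSc} by specializing $w\equiv 1$, so $c\equiv D+1$. For instance in part~1 this gives $\mdrho(v)=4D+3$ and $\mdrho(e)=2(D+1)$, hence $\mdrho(V(G))\leq -2$ rewrites as $|E(G)|\geq \frac{(4D+3)|V(G)|+2}{2(D+1)}$, which is Theorem~\ref{thm:multi-degreeSc}; parts~2,~3,~and~4 are identical arithmetic. Thus it suffices to prove the four potential bounds.

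For each of the four bounds, the plan is to take a vertex-minimum ($\dFF$- or $\eFF$-)critical weighted (multi-)graph $G$ violating the bound and derive a contradiction. By \Cref{2-ve}, $\delta(G)\geq 2$ and every $2$-vertex has capacity $0$. The central strategy is to locate a proper subset $S\subsetneq V(G)$ and apply minimality in one of two ways. If $\rho(S)$ is small enough, contract $G[S]$ to a single weighted vertex whose capacity records the ``room'' remaining in the relevant forest; the resulting smaller weighted (multi-)graph is critical, so admits a coloring by minimality, and this coloring extends back to $G$. If instead $\rho(S)$ lies in a ``forbidden gap'' identified by the gap lemmas of Section~\ref{sec-gaplems}, a direct extension already exists, contradicting criticality.

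The reducible configurations of Section~\ref{sec-common-reducibles} implement these reductions concretely (low-degree vertices, short paths of capacity-$0$ vertices, small dense clusters, and in the simple case triangles). Once none of them occur in $G$, one bounds $\rho(V(G))=\sum_v \rho(v)-\sum_e \rho(e)$ by accounting for the contribution of each vertex via its degree and capacity; combined with $\sum_v d(v)=2|E(G)|$, this shows that the structural restrictions on $G$ force $\rho(V(G))\leq -2$ (or $-3$ in part~2), the desired contradiction. In each of the four parts, the local reducibility analysis is where the real work lies; the global accounting is a short arithmetic check.

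The main obstacle will be part~4 (even $D$) together with the odd~$D$ subcase of part~3: in the $\eFF$-setting, contracting $S$ to a single weighted vertex must encode not just a degree limit but the \emph{total weight} of the host component of $M$, so the replacement capacity depends on both $\rho(S)$ and on the internal structure of a coloring to be extended. For even $D$ in particular, the two-case potential~\eqref{nn1}---in which a high-capacity vertex costs one less than one would naively expect, and an edge costs $2D+1$ rather than $2(D+1)$---forces the gap lemmas to be tight on both sides, so any slack in the reducibility analysis collapses the bound. Managing this bookkeeping cleanly is the technical heart of the proofs in Sections~\ref{sec-dFF-coloring} and~\ref{sec-eFF-coloring}.
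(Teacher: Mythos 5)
Your proposal correctly identifies the global shape of the argument: specialize $w\equiv 1$ to recover the original theorems, take a vertex-minimum weighted counterexample $G$, establish gap lemmas bounding the potential of proper subsets from below, prove local reducibility lemmas, and finish by discharging with $ch(u)=\rho(u)-d(u)\rho_e/2$. You also correctly flag the even-$D$ potential~\eqref{nn1} as requiring extra care and needing the sharper (two-tier) gap lemma machinery. That much matches the paper.

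The genuine gap is in your treatment of part~3 (odd $D$, $\eFF$-colorings). You describe the final step uniformly as ``a short arithmetic check'' after reducibility, but for odd $D$ the discharging does \emph{not} close the argument: after the rule that vertices of $T$ (3-vertices of weight $\frac{D+1}{2}$) distribute their charge $1$ to their neighbors, the best uniform bound is $ch^*(v)\leq 0$ for every vertex, and equality can actually hold. One must then assume $\sum_{v\in A}ch^*(v)\geq -1$ for every $A$ (else done), read off severe structural restrictions on $G$ (only three vertex types survive, each vertex in $T'=V(G)\setminus T$ has $j(v)\leq 3$ neighbors in $T$, etc.), and perform a case analysis on cycles in $G[T']$ to either exhibit an $\eFF$-coloring of $G$ (contradicting criticality) or locate a subset violating the gap lemma. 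This needs two further reducible configurations beyond the common ones — a ban on $4$-vertices of weight $\leq\frac{D+1}{2}$ with all four neighbors in $T$, and a ban on $2$-vertices not in a multi-edge — neither of which appears in your sketch of Section~\ref{sec-common-reducibles} (your list of ``low-degree vertices, short paths of capacity-$0$ vertices, small dense clusters, triangles'' does not match the paper's two common reducibles, which are specifically about $3$-vertices: not in a multi-edge if positive capacity, and adjacent $3$-vertices have combined weight $\geq D+2$). So while the plan is right for parts~1,~2,~and~4, part~3 requires a qualitatively different finish that your proposal does not anticipate.
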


Some of our arguments require precise calculations of potential, and we will present those lemmas in future sections.
However, there are common features among $\mdrho,\sdrho$, and $\merho$ that we can prove. 
Let $\rho \in \{\mdrho, \sdrho, \merho\}$ denote a general potential function that we will establish some remarks about.
{ Also, everywhere below} {\em a coloring} will refer to a $\dFF$-{coloring} or a $\eFF$-{coloring}, as appropriate.
Let $\rho_0$ denote the potential of a vertex with capacity $0$, let $\rho_+$ denote the potential of a vertex with maximum capacity, and let $\rho_e$ denote the potential of an edge.
Let $\rho_*$ denote the maximum potential of a critical graph according to Theorem \ref{potential theorem} (so $\rho_* = -3$ if $\rho = \sdrho$ and $\rho_* = -2$ otherwise).
As $G$ is a counterexample, we have $\rho(V(G)) \geq \rho_* + 1$.
Let $\alpha$ denote the coefficient of the capacity term in the potential of a vertex (so $\alpha=2$ for multigraphs and $\alpha=3$ for simple graphs).

\begin{fact}\label{relative potentials}
For any $\rho \in \{\mdrho, \sdrho, \merho\}$, the following are true by the definitions of the functions: 
\begin{enumerate}[(i)]
    \item $\rho_e = \rho_+ - \rho_0$.
    \item $\rho_0 - \rho_e = -1$.
    \item $2 \rho_e - \rho_+ = 1$.
    \item $\rho_0/\alpha < D+1$.
    \item $\alpha = -\rho_*$.
\end{enumerate}
\end{fact}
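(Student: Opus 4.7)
The plan is to verify each of the five identities by direct computation. Since each of the symbols $\rho_0$, $\rho_+$, and $\rho_e$ is defined by plugging a specific value of $c(v)$ into the vertex formula (or reading off the edge weight), I would first tabulate these quantities together with $\alpha$ and $\rho_*$ for all three potential functions, and only afterwards substitute into (i)--(v).

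Concretely, for $\mdrho$ I would read off $\rho_0 = 2D+1$ (set $c=0$), $\rho_+ = 2D+1+2(D+1) = 4D+3$ (set $c=D+1$), $\rho_e = 2D+2$, with $\alpha = 2$ and $\rho_* = -2$. For $\sdrho$ the analogous values are $\rho_0 = 3D+2$, $\rho_+ = 6D+5$, $\rho_e = 3D+3$, with $\alpha = 3$ and $\rho_* = -3$. For $\merho$ with odd $D$, the definition forces the same numbers as $\mdrho$. The only slightly delicate case is $\merho$ for even $D\geq 2$: here I would note that $c=0$ satisfies $0 \leq (D-2)/2$, so the first branch of \eqref{nn1} gives $\rho_0 = 2D$, while $c=D+1$ satisfies $D+1 \geq D/2$, so the second branch gives $\rho_+ = 2D + 2(D+1) - 1 = 4D+1$; thus $\rho_e = 2D+1$ and $\alpha = 2$, $\rho_* = -2$.

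With these numbers in hand, each of (i)--(v) reduces to arithmetic. Parts (i)--(iii) become the three linear identities $\rho_+ - \rho_0 = \rho_e$, $\rho_e - \rho_0 = 1$, and $2\rho_e - \rho_+ = 1$, which one checks for each of the three tables in turn (noting that for the even-$D$ branch of $\merho$ the common offset by $1$ in the vertex formula is exactly what preserves these identities). Part (iv) is the strict inequality $\rho_0 < \alpha(D+1)$, which holds with slack $1/2$, $2/3$, and $1$ in the three cases respectively. Part (v) is immediate from the tabulated values.

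I do not expect a real obstacle: this is a bookkeeping lemma. The only place where one must be careful is the piecewise definition of $\merho$ for even $D$, where it is essential to check that the first branch governs $c=0$ (using $D\geq 2$ so that $(D-2)/2 \geq 0$) and the second branch governs $c=D+1$; it is precisely the $-1$ offset in the second branch, paired with the reduction of the constant term from $2D+1$ to $2D$, that keeps all five identities intact relative to the odd-$D$ case.
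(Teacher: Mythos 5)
Your proposal is correct and matches the paper's intent exactly: the paper states the fact is "true by the definitions of the functions" with no further argument, and your tabulation of $\rho_0,\rho_+,\rho_e,\alpha,\rho_*$ for each of $\mdrho$, $\sdrho$, and $\merho$ (odd and even $D$ separately), followed by arithmetic checks of (i)--(v), is precisely the direct verification the paper leaves to the reader. You also correctly identify the one subtle point — that for $\merho$ with even $D\geq 2$ the value $c=0$ falls in the first branch of \eqref{nn1} and $c=D+1$ in the second, with the $-1$ offset exactly compensating the drop in the constant term.
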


The terms in the potential function are inspired by the extremal graphs presented in Section \ref{constructions section}.
For simple graphs, each additional weight on a vertex after the first weight corresponds to an implied pendant $K_4$.
For multigraphs, each additional weight on a vertex after the first weight corresponds to an implied pendant $K_3$ with a multi-edge between the non-cut vertices.
We use weights so that these gadgets are only implied; 
this will simplify our arguments in regards to $G$ being a minimum counterexample.
We set $\alpha$ to be the overall affect on potential from those gadgets; the fact that $\alpha = -\rho_*$ is a coincidence.
There is another gadget that is not as obvious.
When $D$ is even and we consider $\eFF$-colorings, we can increase the weight of a vertex $x$ by $D/2+2$ by adding a $K_4$ with vertex set $\{x',x_1,x_2,x_3\}$ where $w(x') = 1$ and $w(x_i) = D/2+1$ for each $i$, and adding edge $xx'$.
This construction is more efficient in the sense $2D+3 < \alpha(D/2+2)$, which explains the more complicated formula for $\merho$ when $D$ is even.

Our proof of \Cref{potential theorem}, which implies Theorems \ref{thm:multi-degreeSc}, \ref{thm:simple-degreeSc}, \ref{thm:multi-orderSc}, 
will conclude by discharging.
Observe that $ \rho(V(G)) = \sum_{u \in V(G)} \left(\rho(u) - d(u)\rho_e / 2\right)$.
We denote the initial charge of a vertex as $ch(u) = \rho(u) - d(u)\rho_e / 2$.
If $d(u) \geq 4$, then by Fact~\ref{relative potentials}(iii), $ch(u) \leq \rho_+ - 2\rho_e = -1$.
By Lemma \ref{2-ve} we have that $d(u) \geq 2$.
Moreover, if $d(u) = 2$, then $c(u) = 0$, so $ch(u) = \rho_0 - \rho_e = -1$.
So to prove Theorem \ref{potential theorem}, we will only need to prove that $3$-vertices have small capacity.
In some cases we will also need to show that the neighbors of $3$-vertices have small charge.

We now prove a lemma on sets with minimum potential. It will be used to prove the gap lemmas in \Cref{sec-gaplems}. 
For $A\subset V(G)$, let the {\em boundary} of $A$, denoted $\Gamma(A)$, be the set of vertices in $A$ that have neighbors outside of $A$.

\begin{lemma}\label{prelim gap lemma}
Suppose $\rho \in \{\mdrho, \sdrho, \merho\}$ and
a set $ S \subsetneq V(G)$ with $2 \leq |S| \leq |V(G)|-1$
 has minimum potential among such sets. If $\rho(S) \leq \rho_0 + (2 + \rho_*)$, then

(i) $|S|\neq |V(G)|-1$,

(ii) for each $v\in V(G)-S$, $|E(v,S)|\leq 1$,

(iii)
 for each $x\in \Gamma(S) $  and any coloring  $(M', F')$ of $G[S]$, $x\in M'$.
\end{lemma}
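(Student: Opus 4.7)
For part (i), the plan is to assume for contradiction that $|S| = |V(G)| - 1$, so that $V(G) \setminus S = \{v\}$ for a single vertex $v$ with $|E(v, S)| = d(v)$. Since $G$ is a counterexample to Theorem~\ref{potential theorem}, we have $\rho(V(G)) \geq \rho_* + 1$. By Lemma~\ref{2-ve}, $d(v) \geq 2$, and if $d(v) = 2$ then $c(v) = 0$, so $\rho(v) = \rho_0$; otherwise $d(v) \geq 3$ and $\rho(v) \leq \rho_+$. A short case analysis using Facts~\ref{relative potentials}(ii)--(iii) gives $\rho_e\, d(v) - \rho(v) \geq \rho_e + 1$ in both cases. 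Substituting into $\rho(V(G)) = \rho(S) + \rho(v) - \rho_e\, d(v)$ yields $\rho(S) \geq \rho_* + \rho_e + 2$, while the hypothesis together with Fact~\ref{relative potentials}(ii) gives $\rho(S) \leq \rho_0 + 2 + \rho_* = \rho_* + \rho_e + 1$, a contradiction.

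Part (ii) then follows quickly from (i) together with the minimality of $\rho(S)$. Suppose $v \in V(G) \setminus S$ has $|E(v, S)| \geq 2$. Set $T = S \cup \{v\}$; then $|T| \geq 3$, and by (i) also $|T| \leq |V(G)| - 1$, so $T$ is a proper subset of $V(G)$ of size at least $2$. Using Fact~\ref{relative potentials}(iii), we compute $\rho(T) = \rho(S) + \rho(v) - \rho_e\, |E(v,S)| \leq \rho(S) + \rho_+ - 2\rho_e = \rho(S) - 1$, contradicting the minimality of $\rho(S)$ among such proper subsets.

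For part (iii), I would argue by contradiction: suppose some coloring $(M', F')$ of $G[S]$ places a boundary vertex $x \in \Gamma(S)$ into $F'$. The plan is to produce a coloring of $G$, contradicting its criticality. Since $|S \setminus \{x\}| \geq 1$, the induced subgraph $G - (S \setminus \{x\})$ is a proper subgraph of $G$ and hence admits a coloring $(M^*, F^*)$ by criticality. The idea is to merge $(M', F')$ with $(M^*, F^*)$ into a partition $(M, F)$ of $V(G)$; part (ii), which guarantees that every $v \in V(G) \setminus S$ has at most one neighbor in $S$, implies that the cross edges between $S$ and $V(G) \setminus S$ form a very restricted ``near-matching'' structure that should permit a clean merge.

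The main obstacle is verifying that the combined partition yields forests on both sides and preserves the $M$-constraints. Even with each $v \in V(G) \setminus S$ contributing at most one cross edge, two such cross edges together with a path in $G[M']$ and a path in $G[V(G) \setminus S] \cap M^*$ could a priori close a cycle in $G[M]$, and analogously for $G[F]$. To rule this out, I expect to need to select $(M^*, F^*)$ with $x$ on the same side as in $(M', F')$ (namely $x \in F^*$), possibly via local recoloring near $x$ if the coloring handed up by criticality places $x$ on the wrong side, so that the two colorings agree at $x$ and every cross edge runs between the $M$- and $F$-parts of the merged partition; this makes the union of the two forests on each side a forest and preserves the weight or component constraints on $M$.
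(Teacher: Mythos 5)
Your proofs of parts (i) and (ii) are correct and essentially the same as the paper's: for (i) you use Lemma~\ref{2-ve} together with Fact~\ref{relative potentials}(ii)--(iii) exactly as the paper does, and for (ii) you move $v$ into $S$ and use Fact~\ref{relative potentials}(iii) to drop the potential, again matching the paper.

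Part (iii) has a genuine gap, and the obstacle you flag is indeed fatal to the merge approach as you outline it. Taking a coloring $(M^*,F^*)$ of $G-(S\setminus\{x\})$ and gluing it to $(M',F')$ along $x$ does not control cycles that use \emph{several} cross-edges: if $s_1,s_2\in\Gamma(S)\cap F'$ with $s_1,s_2\neq x$, each with a neighbor $v_1,v_2\in F^*\cap(V(G)-S)$, and if $s_1,s_2$ are joined by a path in $G[F'\cap S]$ while $v_1,v_2$ are joined by a path in $G[F^*\cap(V(G)-S)]$, the union is a cycle in $F$. Part~(ii) only says each $v_i$ has a single neighbor in $S$; it does nothing to prevent two distinct cross-edges from closing a cycle via internal paths. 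Local recoloring at $x$ cannot help because the bad cycle may not pass near $x$ at all, and no choice of $(M^*,F^*)$ on $\{x\}\cup(V(G)-S)$ can anticipate the connectivity structure of $F'$ inside $S$, which $G-(S\setminus\{x\})$ does not see. The same issue arises for $M$: a cross-edge from $M'$ into $M^*$ can enlarge a component or raise a degree beyond the allowed bound even though each of $(M',F')$ and $(M^*,F^*)$ is individually valid.

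The paper's argument is structurally different and designed precisely to encode this global information. Instead of coloring $G-(S\setminus\{x\})$, it builds an auxiliary graph $G'$ from $G-S$ by contracting \emph{all} of $F'$ to a single vertex $w$ of capacity $0$, zeroing out the capacities of the outside neighbors of $M'$ (except $y$, the fixed neighbor of $x$), and deleting $M'$. The contraction forces every coloring of $G'$ to place $w$ in $F''$, and any would-be cycle in $F'\cup(F''-w)$ would lift to a cycle through $w$ in $G'[F'']$, which is excluded. Zeroing capacities on $N(M')-S-y$ forces those vertices into $F''$, so no $(M,M)$ edge crosses the boundary. The contradiction then comes not from directly producing a coloring of $G$, but from the potential bookkeeping: $G'$ has no coloring, hence (by minimality of $G$) contains a set $B$ of potential at most $\rho_*$, and depending on whether $w\in B$, whether $B\cup S=V(G)$, and the savings from the edge $xy$, one contradicts either $\rho(V(G))\geq\rho_*+1$ or the minimality of $\rho(S)$. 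Your proposal lacks both the contraction (which is what makes the merged partition acyclic) and the potential argument (which is how the contradiction is extracted when no coloring exists), so it does not establish (iii).
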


\begin{proof}  If $|S| = |V(G)|-1$
and $\{v\} = V(G)-S$, then
 $\rho_* + 1 \leq \rho(G) = \rho(S) + \rho(v) - d(v)\rho_e$.
By Lemma \ref{2-ve}, either $d(v) = 2$ and $\rho(v) = \rho_0$, or $d(v) \geq 3$.
In the first case, Fact~\ref{relative potentials}~(ii) implies
$$\rho(S) \geq \rho_* + 1 + 2\rho_e - \rho_0 = \rho_* + \rho_0+3
. $$
In the second case, Fact~\ref{relative potentials}~(ii),~(iii) implies
$$\rho(S) \geq \rho_* + 1 + 3\rho_e - \rho_+ = \rho_* + 
\rho_0+3. 
 $$
This proves (i).

\medskip
If there is $u\in V(G)-S$ with $\setsize{E(u, S)}\geq 2$,
then by  Fact \ref{relative potentials}~(iii)
 $$\rho(S + u) = \rho(S) + \rho(u) - \rho_e \setsize{E(u, S)} \leq \rho(S) + \rho_+ - 2\rho_e < \rho(S).$$
Since by (i), $|S+u|\leq |V(G)|-1$, this contradicts the choice of $S$.
Thus (ii) is proved.

\medskip
Now, suppose there is a coloring  $(M', F')$ of $G[S]$ where
$x\in F'\cap \Gamma(S)$. Let  $y$ be a neighbor of $x$ in $V(G)-S$. Construct a graph $G'$ from $G-S$ as follows:
\begin{enumerate}[(a)]
    \item Identify all vertices of $F'$ to a single vertex $w$ and remove all loops.  Give $w$ capacity $0$. 
    \item Reduce the capacity of each vertex in $N(M')-S -y$ to $0$.
    \item Remove all vertices in $M'$.
\end{enumerate}
By (ii), $w$ is not incident to multiple edges, so if $G$ were a simple graph then $G'$ is also simple.

If
 $G'$ has a coloring  $(M'', F'')$, consider  coloring $(M, F)$ where 
  $M=M' \cup M''$ and $F= F' \cup F''-w$. By construction, 
  neither $M$ nor $F$ has a cycle. Also no
   $(M,M)$-edges connect $S$ with $V(G)-S$. Thus $(M, F)$ is a coloring of $G$, a contradiction. 

So, $G'$ has no coloring. By the minimality of $G$, there is $B\subseteq V(G')$ with
$\rho_{G'}(B)\leq \rho_*$. If $w\in B$, then
$$\rho_{G}((B-w)\cup S)\leq \rho_{G'}(B)+\rho_{G}(S)-\rho_{G'}(\{w\})\leq \rho_*+\rho_0 + (2 + \rho_*)-\rho_0\leq \rho_*,$$
a contradiction. 

Otherwise $w\notin B$. In this case, if $B\cup S\neq V(G)$, then
$$\rho_{G}(B\cup S)\leq \rho_{G'}(B)+\rho_{G}(S)\leq \rho_*+\rho_{G}(S),$$
contradicting the choice of $S$, and if $B\cup S= V(G)$, then since the potential of $y$ was not decreased in $G'$,
$$\rho_{G}(B\cup S)\leq \rho_{G'}(B)+\rho_{G}(S)-\rho(xy)\leq \rho_*+
\rho_0 + (2 + \rho_*)-\rho_e\leq \rho_*+(\rho_0-\rho_e)
.$$
By  Fact \ref{relative potentials}~(ii), this is a contradiction, which
 proves (iii).
\end{proof}

\section{Gap Lemmas}\label{sec-gaplems}

In this section, we prove the so called ``Gap Lemmas'' for the types of problems we investigate. 
Roughly speaking, the gap lemmas provide  lower bounds on the potential  of vertex subsets. 

We will first prove a gap lemma that works for all (multi)graphs and all colorings we consider, except when $D$ is even for multigraphs and $\eFF$-colorings.
When $D$ is even for multigraphs and $\eFF$-colorings, we first prove a (weak) gap lemma, and then prove a strong gap lemma that strengthens the (weak) gap lemma.
We separate the case when $D$ is even for multigraphs and $\eFF$-colorings into a subsection.

\begin{lemma}\label{common gap lemma}
Suppose $\rho \in \{\mdrho, \sdrho, \merho\}$, and if $\rho = \merho$, then $D$ is odd.
In particular, $\rho_e = \alpha(D+1)$.
If $S \subsetneq V(G)$ with $2 \leq |S| \leq |V(G)|-1$, then $\rho(S) > \rho_0 + (2 + \rho_*)$.
\end{lemma}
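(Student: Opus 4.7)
The plan is to argue by contradiction. Suppose some $S \subsetneq V(G)$ with $2\le |S|\le |V(G)|-1$ satisfies $\rho(S)\le \rho_0+(2+\rho_*)$, and choose $S$ to minimize $\rho(S)$ among all such candidates. Then I would invoke Lemma \ref{prelim gap lemma} to extract three pieces of structure: $|V(G)-S|\ge 2$, each vertex outside $S$ has at most one edge into $S$, and (crucially) in every coloring $(M',F')$ of $G[S]$ we have $\Gamma(S)\subseteq M'$. A coloring $(M',F')$ of $G[S]$ exists by criticality of $G$; fix one.

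Next I would build a strictly smaller auxiliary graph $G'$ that encodes $(M',F')$ into the outside of $S$. Concretely: delete the interior vertices $M'\setminus\Gamma(S)$; identify all of $F'$ into a single new vertex $w$ of capacity $0$, removing loops; retain each $x\in\Gamma(S)$ together with its edges to $V(G)-S$, to $w$, and to other vertices of $\Gamma(S)$; and reduce the capacity of each $x\in\Gamma(S)$ by $d_{G[S]}^{M'}(x)$ to account for the $M$-neighbors of $x$ that were removed from $G[S]$. By property (ii) of \Cref{prelim gap lemma}, $w$ has no multi-edges, so $G'$ is simple when $G$ is.

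The plan is then to show that any coloring $(\tilde M,\tilde F)$ of $G'$ extends to a coloring of $G$ by taking the union with $(M',F')$ on $S$: the capacity reductions at each $x\in\Gamma(S)$ keep $x$ within its original $M$-budget; the capacity $0$ on $w$ (so $w\in\tilde F$) prevents any $F$-cycle from being introduced through $F'$; and property (ii) together with $\Gamma(S)\subseteq M'$ ensures cycles in $G[M]$ across the boundary are already tracked inside $G'[\tilde M]$. Consequently $G'$ has no coloring either, and by minimality of $G$ there is a set $B\subseteq V(G')$ with $\rho_{G'}(B)\le \rho_*$. I would then lift $B$ to a set $B'\subseteq V(G)$: if $w\in B$, set $B' = (B\setminus\{w\})\cup S$; otherwise set $B' = B$. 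A direct potential computation using the hypothesis $\rho_G(S)\le \rho_0+(2+\rho_*)$ together with Fact \ref{relative potentials}(i)--(iv) should give $\rho_G(B')\le \rho_*$; when $B'=V(G)$ one gains the weight of an edge crossing out of $S$ to contradict $\rho(V(G))\ge \rho_*+1$, and when $B'\subsetneq V(G)$ one contradicts either the minimality of $S$ or the criticality of $G$.

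The main obstacle I anticipate is the tightness of the potential bookkeeping: the hypothesis $\rho(S)\le \rho_0+(2+\rho_*)$ is precisely calibrated so that the potential lost through the capacity reductions on $\Gamma(S)$, the replacement of the vertices of $F'$ with the single capacity-$0$ vertex $w$, and the deleted internal edges of $G[S]$ must exactly offset the slack $\rho_0+(2+\rho_*)-\rho_*$ that the lift affords us. Verifying this cancellation is the technical core. A secondary difficulty is the degenerate case $\Gamma(S)=S$ (equivalently $F'=\emptyset$ and $M'\setminus\Gamma(S)=\emptyset$), in which $G'$ does not shrink; there one argues directly, noting that $G[S]=G[M']$ is a forest meeting the $M$-constraints and combining it with a coloring of $G-(S\setminus\{x\})$ (for a suitable $x\in\Gamma(S)$) provided by the criticality of $G$.
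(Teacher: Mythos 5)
The proposal diverges from the paper's argument in a way that leaves genuine gaps. You construct the auxiliary graph $G'$ by retaining the boundary $\Gamma(S)$, deleting the interior of $M'$, identifying $F'$ to a single vertex $w$, and reducing boundary capacities by $d_{G[S]}^{M'}(x)$. This construction does not correctly encode the interaction of $(M',F')$ with the outside.

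First, nothing in your $G'$ forces $\Gamma(S)\subseteq \tilde M$ in a coloring $(\tilde M,\tilde F)$ of $G'$. Since $\Gamma(S)\subseteq M'$ by \Cref{prelim gap lemma}(iii), you must use $x\in M$ in the glued coloring for every $x\in\Gamma(S)$; but if $G'$'s coloring places some $x\in\tilde F$, then $G'$ silently misclassifies all $M$-edges incident to $x$ that go outside $S$, and the acyclicity of $G'[\tilde M]$ gives no information about $G[M]$ near $x$. Second, even granting $\Gamma(S)\subseteq\tilde M$, deleting the interior of $M'$ destroys connectivity that matters: if $x_1,x_2\in\Gamma(S)$ are joined by a path in $G[M']$ through deleted interior vertices, and $G'$'s coloring joins $x_1,x_2$ by a path in $\tilde M$ through $V(G)-S$, gluing creates a cycle in $G[M]$ that $G'[\tilde M]$ (a forest) never sees. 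Reducing $c(x_1),c(x_2)$ by a degree count does not exclude this. Third, for $\merho$ (the $\eFF$-coloring case with odd $D$), the constraint is on the total weight of an $M$-component, not on the closed neighborhood size, so the degree-based reduction $d_{G[S]}^{M'}(x)$ is the wrong quantity; you would need the weight of the whole $M'$-component of $x$ inside $S$, and even that leaves the merging-of-components issue unaddressed when several $\Gamma(S)$ vertices share such a component.

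The paper sidesteps all three issues with a different design: it removes \emph{all} of $S$ from the auxiliary graph $G''$ (not just the interior), fixes a single boundary edge $xy$, sets the capacity of every vertex of $N(M')-S-y$ to $0$ (forcing those into $F''$, so the only possible $(M,M)$-edge crossing the cut is $xy$), and splits the budget between the two sides using $s=\lceil \rho(S)/\alpha\rceil$: the capacity of $x$ is cut by $s$ inside $G[S]$ and that of $y$ by $D+1-s$ outside. The $s$-split, not a per-vertex degree adjustment, is what makes both the $\dFF$ neighbourhood constraint and the $\eFF$ component-weight constraint close across the designated edge. Your proposal is missing this mechanism, and I don't see how to repair it without essentially reconstructing the paper's argument.
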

\begin{proof}
Let $S$ be a proper subset of $V(G)$ with smallest potential among those with at least two vertices.  Suppose $\rho(S)\leq \rho_0+(2+\rho_*)\leq \rho_0$.

We define $s = \max\{0, \lceil \rho(S)/\alpha \rceil\}$. Since $\rho(S)\leq \rho_0$,
 Fact \ref{relative potentials}~(iv) yields $s \leq D+1$.
Fix an arbitrary edge $xy$ such that $x \in S$ and $y \notin S$, which must exist because $G$ is connected.
Let $G'$ be $G[S]$ after reducing the capacity of $x$ by $\min\{s, c(x)\}$.
By Fact \ref{relative potentials}~(v), $\rho(S) - s \alpha > -\alpha = \rho_*$.
By the choice of $S$, every subset of $V(G')$ has potential strictly greater than $\rho_*$.
As $G$ is a minimum counterexample, and $G'$ is smaller than $G$, the graph $G'$ has no critical subgraph.
Therefore there exists a coloring $(M', F')$ of $G'$. By Lemma~\ref{prelim gap lemma}(iii), $\Gamma(S)\subseteq M'$; in particular, $x\in M'$.

We construct a graph $G''$ by modifying $G$ based on $M'$ and $F'$ as follows:
\begin{itemize}
    \item For each $u \in N(M')-S-y$ reduce the capacity of $u$ to $0$. 
    \item Reduce the capacity of $y$ by $\min\{c(y), D+1-s\}$.
    \item Remove all vertices in $S$.
\end{itemize}
As $x\in S$, $G''$ is smaller than $G$.

We claim that if $G''$ has a coloring $(M'', F'')$, then $(M' \cup M'', F' \cup F'' )$ forms a coloring of $G$.
Since $\Gamma(S)\cap F'=\emptyset$, there is no cycle in $G[F' \cup F''] $.
The only possible edge between $M'$ and $M''$ is $xy$ (by Lemma~\ref{prelim gap lemma}(ii), the unique neighbor of $y$ in $S$ is $x$), and therefore there is no cycle in $G[M' \cup M'']$.
So the only issue left to consider is whether edge $xy$ 
violates the additional restriction on forests.
But 
 if $\set{x,y} \subseteq M' \cup M''$, then $x$ and $y$ must have had positive capacity in $G'$ and $G''$, respectively. 
Thus, in the construction of $G'$, the capacity of $x$ was reduced by $s$, and in the construction of $G''$ the capacity of $y$ was reduced by $D+1-s$.
\begin{itemize}
    \item For an $\dFF$-{coloring}, $x$ having positive capacity in $G'$ after reducing its capacity by $s$ implies $s < D+1$, and therefore the capacity of $y$ was reduced by at least $1$.  
    Symmetrically, $y$ having positive capacity in $G''$ after  reducing its capacity by $D+1-s$ implies $D+1-s < D+1$, and therefore the capacity on $x$ was reduced by at least $1$.  This means that $|N[x] \cap (M' \cup M'')| \leq c(x)$ and $|N[y] \cap (M' \cup M'')| \leq c(y)$, as desired.
    \item For an $\eFF$-{coloring}, let $C'$ be the component of $M'$ containing $x$ and let $C''$ be the component of $M''$ containing $y$.  By the above, we have that $\sum_{u \in C'} c(u) \leq D+1-s$ and $\sum_{v \in C''} c(v) \leq D+1-(D+1-s) = s$.  As $C' \cup C''$ is a component induced by $M' \cup M''$, we get our desired bound $\sum_{u \in C' \cup C''}c(u) \leq (D+1-s) + s = D+1$.
\end{itemize}
This proves the claim, which contradicts the choice of $G$.
Therefore $G''$ contains a critical subgraph on vertex set $W$.
By the minimality of $G$,  $\rho_{G''}(W) \leq \rho_*$.

We will show that $\rho(S \cup W) \leq \min\{\rho(S)-1, \rho_*\}$.
Assuming the inequality is proven, if $S \cup W$ is a proper subset of $V(G)$, then this contradicts the choice of $S$.
If $S \cup W = V(G)$, then this contradicts the choice of $G$.
So in either case we get a contradiction.

We have 
\begin{equation}\label{potential for common gap lemma}
 \rho_G(S \cup W) \leq \rho(S) + \rho_{G''}(W)  + (\rho_{G}(W) - \rho_{G''}(W)) - \rho_e|E(M',V(G)-S)|.
\end{equation}
The term $\rho_{G}(W) - \rho_{G''}(W)$ accounts for setting the capacity to $0$ of the neighbors of $M'$ in $W-y$ in the construction of $G''$; it follows that $\rho_{G}(W) - \rho_{G''}(W) \leq |E(M',V(G)-S)|(\rho_+ - \rho_0)$.
By Fact \ref{relative potentials}~(i), $\rho_e = \rho_+ - \rho_0$, and therefore 
$$ (\rho_{G}(W) - \rho_{G''}(W)) - \rho_e|E(M',V(G)-S)| \leq 0. $$
Thus the above becomes 
$$ \rho_G(S \cup (W)) \leq \rho(S) + \rho_*.$$
As  $\rho_* < 0$, we have proven $ \rho_G(S \cup W) \leq \rho(S)-1$.
We now will show that $\rho_G(S \cup (W)) \leq \rho_*$.

Recall $\rho(S) \leq \rho_0$.
So the above becomes 
$$ \rho_G(S \cup W) \leq \rho_0 + \rho_*.$$

 Since 
 we reduced the capacity of $y$ by at most $D+1-s$ instead of $D+1$, the maximum possible. 
Therefore our above bound improves to 
$$\rho_{G}(W) - \rho_{G''}(W) \leq |E(M',V(G)-S)|(\rho_+ - \rho_0) - \alpha(D+1 - (D+1-s))$$
So we can improve \eqref{potential for common gap lemma} to say 
$$\rho_G(S \cup W) \leq \rho(S) + \rho_* - \alpha \lceil \rho(S)/\alpha \rceil \leq \rho_* . $$
This contradiction proves the lemma.
\end{proof}

\subsection{$\merho$ with even $D$}

Throughout this subsection, 
assume that $D$ is even.
We will establish an analogue of Lemma \ref{common gap lemma} for $\merho$ under these circumstances.
Since we are working with a specific potential function, we will not use notation like $\rho_e, \rho_+$.
However, some of our arguments will be similar, and it might help to keep as reference that $\rho_0 = 2D$, $\rho_+ = 4D+1$, $\rho_e = 2D+1$, $\rho_* = -2$, and $\alpha = 2$.

\begin{lemma}[Weak gap lemma for $\merho$ and even $D$]\label{gap lemma for even D} 
If $S\subsetneq V(G)$ with $2\leq |S|\leq |V(G)|-1$, then $\merho(S) \geq  D$. 
Moreover, if $\merho(S) =  D$, then
 $|E(S,V(G)-S)|=1$ and $\merho(V(G)-S) = D$.
\end{lemma}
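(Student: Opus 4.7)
The plan is to follow the template of Lemma \ref{common gap lemma}, adapted to $\merho$, which has a ``jump'' of size one at capacity $D/2$. Pick a proper subset $S$ of $V(G)$ with $2 \leq |S| \leq |V(G)|-1$ of minimum potential, and assume $\merho(S) \leq D$. Since $D \leq 2D = \rho_0 + 2 + \rho_*$, Lemma \ref{prelim gap lemma} applies: $|S| \leq |V(G)|-2$, each outside vertex has at most one neighbor in $S$, and every coloring of $G[S]$ places $\Gamma(S)$ in $M'$. Fix an edge $xy$ with $x \in \Gamma(S)$ and $y \in V(G)-S$, set $s = D/2$, and form $G'$ from $G[S]$ by reducing $c(x)$ by $\min\{s, c(x)\}$. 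If $c_{G'}(x) = 0$, then in every coloring of $G'$ the vertex $x$ has weight $D+2$ and must lie in $F'$; such a coloring is also a coloring of $G[S]$, contradicting Lemma \ref{prelim gap lemma}(iii). So $G'$ has no coloring, contradicting the minimality of $G$. Hence $c(x) \geq D/2 + 1$ and $G'$ has a coloring $(M', F')$ with $x \in M'$.

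Form $G''$ from $G - S$ by reducing $c(y)$ by $\min\{c(y), D/2 + 1\}$ and setting $c(u) = 0$ for every other $u \in N(M' \cap S) \setminus S$. If $G''$ has a coloring, combining it with $(M', F')$ gives a coloring of $G$ --- the only potential $(M, M)$-edge across $S$ is $xy$, and its component has weight sum at most $(D+1-s) + s = D + 1$ --- contradicting criticality. So $G''$ contains a critical subgraph $W$ with $\merho_{G''}(W) \leq -2$. Expanding $\merho_G(S \cup W)$ and using $\delta(u) := \merho_G(u) - \merho_{G''}(u) \leq \rho_+ - \rho_0 = 2D + 1$ for non-$y$ reduced vertices and $\delta(y) \leq 2(D+1-s) = D + 2$ (with equality only when $c(y) = D+1$, since otherwise a regime-crossing during the reduction removes one unit), the per-edge contributions of non-$y$ boundary vertices cancel, leaving
\begin{equation*}
\merho_G(S \cup W) \leq \merho(S) + \rho_* - (D-1)\,\mathbf{1}[y \in W].
\end{equation*}
If $y \notin W$, then $S \cup W \subsetneq V(G)$ (since otherwise $y \in V(G)-S \subseteq W$) and the bound $\leq \merho(S) - 2 < \merho(S)$ contradicts minimality. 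If $y \in W$ and $\merho(S) \leq D-1$, the bound gives $\leq -2$, contradicting minimality or that $G$ is a counterexample. This establishes $\merho(S) \geq D$.

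For the equality case $\merho(S) = D$, the surviving scenario $y \in W$ gives $\merho_G(S \cup W) \leq -1$; minimality forces $S \cup W = V(G)$ and $\merho(V(G)) = -1$. The identity $\merho(V(G)) = \merho(S) + \merho(V(G)-S) - (2D+1)|E(S,V(G)-S)|$ yields $\merho(V(G)-S) = (2D+1)|E(S,V(G)-S)| - D - 1$, and applying the first part of the lemma to $V(G)-S$ gives $|E(S,V(G)-S)| \geq 1$. The main obstacle will be excluding $|E(S,V(G)-S)| \geq 2$: my plan is to first observe that tightness in the above chain forces $c(y_i) = D+1$ for every outside neighbor $y_i$ of $S$, and then run the transfer argument with parameter $s = D/2 + 1$ on a boundary vertex $x$ with $c(x) \geq D/2 + 2$ (whose existence would need a separate structural argument under the hypothesis $|E(S,V(G)-S)| \geq 2$) to push the bound strictly below $-1$, contradicting $\merho(V(G)) = -1$. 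Once $|E(S,V(G)-S)| = 1$ is established, the identity yields $\merho(V(G)-S) = D$.
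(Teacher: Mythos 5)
Your overall template mirrors the paper's (reduce capacity of a boundary vertex $x$ on the $S$ side, reduce capacity of a boundary vertex $y$ on the other side by the complementary amount, glue colorings, derive a potential contradiction), but there are two genuine gaps.

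\textbf{The fixed choice $s = D/2$ fails for small $\merho(S)$.} You conclude $c(x) \geq D/2 + 1$ and that $G'$ has a coloring ``by the minimality of $G$.'' Minimality only applies after you verify that every subset of $V(G')$ has potential strictly above $\rho_* = -2$. Reducing $c(x)$ by $\min\{D/2, c(x)\}$ can lower $\merho_G(T)$ by as much as $D$ (or $D-1$ with a regime crossing) for any $T \ni x$, so you need $\merho(S) \geq D-1$ (roughly) for this to be safe. But at this point you are trying to \emph{prove} $\merho(S) \geq D$ and, a priori, the minimum-potential set $S$ could have potential well below $D-1$; for such $S$, your $G'$ can contain a subset of potential $\leq -2$ and the minimality argument collapses. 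The paper avoids this precisely by taking the \emph{adaptive} $s = \max\{0, \min\{c(x), \lceil \merho(S)/2 \rceil\}\}$, which guarantees $\merho_{G'}(T) \geq -1$ for every $T \subseteq S$ (the drop is at most $2s \leq \merho(S)+1$). This is not a cosmetic difference: your fixed $s = D/2$ only covers the top sliver $\merho(S) \in \{D-2, D-1\}$.

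\textbf{The uniqueness of the boundary edge is left open.} You defer establishing $|E(S, V(G)-S)| = 1$ to the equality case and explicitly flag that it ``would need a separate structural argument.'' The paper instead proves $|E(S,V(G)-S)|=1$ \emph{first}, as a standalone claim (by deleting one boundary edge $e_2$, zeroing $c(y_2)$, and analyzing the resulting low-potential set by cases on whether it intersects $S$ and whether it contains $y_1$), and only then runs the capacity-transfer argument along that unique edge. Reversing the order, as you do, forces your potential arithmetic to absorb multiple boundary edges in the $\merho_G(S\cup W)$ expansion; that part is workable in the main inequality, but it leaves the equality case unresolved, and your sketch for it (find a boundary vertex with $c(x) \geq D/2+2$) is not justified. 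Proving the one-edge claim up front is the cleaner route.

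Finally, you did not verify that the minimum-potential set $S$ satisfies $\merho(S) > -2$ before invoking Lemma~\ref{prelim gap lemma} and the coloring-of-$G[S]$ step; although this is implicit in the paper's flow, your version skips the justification, which compounds the $s=D/2$ gap above.
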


\begin{proof}
Assume that $\merho(S)$ is minimum  among all  $S\subsetneq V(G)$ with $2\leq |S|\leq |V(G)|-1$ and that
 $\merho(S) \leq  D$. Then all hypotheses of Lemma~\ref{prelim gap lemma} hold for $S$, so the conclusions for \Cref{prelim gap lemma} follow as well.


Our first claim is:
\begin{equation}\label{2edges}
   |E_G(S,V(G)-S)|=1.
\end{equation}

Indeed, suppose for $i\in\set{1,2}$, $x_i\in S,$ $y_i\in V(G)-S$, and edge $e_i\in E(G)$ connects $x_i$ with $y_i$.
 Let $G'=G-e_2$, where $c(y_2)$ is decreased to $0$. If $G'$ has an $\eFF$-coloring $(M',F')$, then since this contains  an $\eFF$-coloring of $G[S]$,
 by \Cref{prelim gap lemma}~(iii), $x_2\in M'$. 
 On the other hand, since  $c_{G'}(y_2)=0$, $y_2\in F'$, so $(M',F')$ is an $\eFF$-coloring of $G$, a contradiction.

 Thus $G'$ has no $\eFF$-coloring. By the minimality of $G$, there is 
 $B\subseteq V(G')$ with $\merho_{G'}(B)\leq -2$. By the construction of $G'$,
 $y_2\in B$ and $\merho_{G}(B)\leq\merho_{G'}(B)+(2D+1) \leq 2D-1$.
 If $B\cap S\neq\emptyset,$ then by the choice of $S$, $\merho_{G}(S)\leq 
 \merho_{G}(S\cap B)$ and so by submodularity of the potential function,
 $$\merho_{G}(S\cup B)\leq \merho_{G}( B)-\merho_{G}(x_2y_2)\leq (2D-1)-(2D+1)=-2, $$
a contradiction. 

If $B\cap S=\emptyset$ and $y_1\notin B$, then
$$\merho_{G}(S\cup B)\leq \merho_{G}( B)+\merho_{G}( S)-\merho_{G}(x_2y_2)\leq (2D-1)+\merho_{G}( S)
-(2D+1)=-2+\merho_{G}( S),
 $$
contradicting the choice of $S$. Finally if $B\cap S=\emptyset$ and $y_1\in B$, then
$$\merho_{G}(S\cup B)\leq \merho_{G}( B)+\merho_{G}( S)-\merho_{G}(\{x_1y_1,x_2y_2\})\leq (2D-1)+\merho_{G}( S)
 -2(2D+1)\leq-D-3.
 $$
This contradiction proves~\eqref{2edges}.

So, we may assume that $xy$ is the unique edge connecting $S$ with $V(G)-S$ where $x\in S$.
Suppose $\merho(S)\leq D-1.$
Let $s = \max\{0,\min\{c(x), \lceil \merho(S)/2 \rceil\}\}$, so $s\in\{0,\ldots,D/2\}$.
Let $H$ be $G[S]$ with the capacity of $x$ reduced by $s$.
By choice of $S$ and the minimality of $G$, $H$ has an $\eFF$-coloring $(M', F')$.

Obtain $G''$ from $G-S$ by reducing the capacity of $y$ by 
    $\min\{c(y),D+1-s\}$.
If 
 $G''$ has an $\eFF$-coloring  $(M'', F'')$, then consider
 a coloring $(M, F)$ where   $M=M' \cup M''$ and $F= F' \cup F''$. 
  By construction, 
  neither $M$ nor $F$ has a cycle. Furthermore,
  the only $(M,M)$-edge connecting $S$ with $V(G)-S$ can be $xy$. 
  If $xy$ is not an $(M, M)$-edge, then $(M, F)$ is an $\eFF$-coloring of $G$, a contradiction. 

  Suppose $xy$ is an $(M,M)$-edge and consider the component $M_0$ containing $x$ and $y$. Let 
  $M_x$ (respectively, $M_y$) be the component of $M_0-xy$ containing $x$ (respectively, $y$).
Since $x\in M'$, $s<c(x)$, and
 $W_G(V(M_x))\leq D+1-s$. Similarly, since $y\in M''$, $c_{G''}(y)\geq 1$.
So by the definition of $G''$, $D+1-s\leq c_G(y)$ and $W_G(V(M_y))\leq D+1-( D+1-s)=s$. 
Thus, $W_G(V(M_0))\leq (D+1-s)+s=  D+1$ and hence $(M,F)$ is an $\eFF$-coloring of $G$, a contradiction. 

Therefore, $G''$ has no $\eFF$-coloring, and by the minimality of $G$, $G''$ contains
a vertex subset $B$ with $\merho_{G''}(B)\leq -2$. By construction, $y\in B$
and $c_{G''}(y)>0$. Then
$c_G(y)-c_{G''}(y)=D+1-s$. So, if $s\leq \frac{D}{2}-1$, then
$c_{G''}(y)\leq s\leq \frac{D}{2}-1$ and by~\eqref{nn1}, 
\begin{equation}\label{other}
 \merho_G(B)-\merho_{G''}(B)= \merho_G(y)- \merho_{G''}(y)\leq 2(D+1-s)-1.   
\end{equation}
Thus $$\merho_G(S\cup B)\leq \merho_G(S)+\merho_{G''}(B)+(2D-2s+1) -\merho_{G}(xy)$$
\begin{equation}\label{D-1}
    \leq 2s+(-2)+(2D-2s+1)-(2D+1)=-2,
\end{equation}
 a contradiction. 
 Otherwise, $s=\frac{D}{2}$, so 
 instead of~\eqref{other}, we have a slightly weaker inequality 
 $ \merho_G(B)-\merho_{G''}(B)\leq 2(D+1-s)$
 and instead of $\merho_G(S)\leq 2s$ we have $\merho_G(S)\leq 2s-1$ since $\merho(S)\leq D-1$. 
 Thus, as in~\eqref{D-1}, we still get $\merho_G(S\cup B)\leq -2$.

This contradiction proves the first part of the lemma and that $|E(S,V(G)-S)|=1$ when $\merho(S)=D$.
It remains to show that $\merho(V(G)-S) = D$ when $\merho(S) = D$. Indeed, if 
$\merho(V(G)-S) \leq D-1$, then $\merho(V(G))\leq D+(D-1)-(2D+1)=-2$, a contradiction. 
On the other hand, if
$\merho(V(G)-S) \geq D+1$, then $\merho_{G''}(V(G''))\geq (D+1)-2s\geq 1$. So, in the argument above
$B\neq V(G)-S$ and $\merho_G(S\cup B)< \merho_G(S)$ contradicting the choice of $S$. 
\end{proof}

\begin{lemma}\label{cut-edge} Suppose $G$ has a cut edge $e=xy$, and for $z\in \{x,y\},$ $G_z=(V_z,E_z)$ is the component of $G-e$ containing $z$.
Then for $z\in \{x,y\},$

(a) in each $\eFF$-coloring $(M_z,F_z)$ of $G_z$, $z\in M_z$ and the component
$T_z$ of $G[M_z]$ containing $z$ satisfies $W(T_z)\geq \frac{D+2}{2}$;

(b) $\merho(V_z)=D$;

(c) $w(z)=1$;

(d) $d(z)\geq 4$.
\end{lemma}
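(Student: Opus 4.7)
My plan is to prove the four parts in the order (a)-first-part, then (b) and (c) together, then the weight bound in (a), and finally (d), with a capacity-reduction argument at $x$ (and symmetrically $y$) doing most of the work. For the first part of (a), suppose some $\eFF$-coloring $(M_z,F_z)$ of $G_z$ had $z\in F_z$; pairing it with any $\eFF$-coloring of $G_{3-z}$ would yield an $\eFF$-coloring of $G$, since the bridge $xy$ becomes either an $(F,F)$- or an $(F,M)$-edge, and being a bridge $xy$ cannot close any $F$-cycle nor alter the weight of any $M$-component, which is inherited from the two pieces. This contradicts that $G$ has no coloring, so $z\in M_z$ in every coloring of $G_z$.

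For (b) and (c), let $G_x^{(k)}$ denote $G_x$ with $c(x)$ reduced by a positive integer $k$. Because any coloring of $G_x^{(k)}$ must (by the first part of (a)) have $x\in M$, the graph $G_x^{(k)}$ is colorable iff $G_x$ has a coloring with $W(T_x)\leq D+1-k$. If $G_x^{(k)}$ has no coloring, minimality of $G$ supplies a critical $B\subseteq V_x$ with $\merho_{G_x^{(k)}}(B)\leq -2$, and \Cref{gap lemma for even D} forces $x\in B$. A case split on whether reducing $c(x)$ by $k$ crosses the regime boundary $c=D/2$ in \eqref{nn1} shows the potential drop at $x$ is $\Delta\in\{2k-1,2k\}$; combining $\merho_G(B)\leq -2+\Delta$ with the gap bound $\merho_G(B)\geq D$ yields $\Delta\geq D+2$, which after the regime analysis requires either $k\geq D/2+2$ (regime-crossing) or $c_G(x)=D+1$ with $k=D/2+1$ (same-regime). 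Taking $k=D/2+1$ kills the regime-crossing case, forcing $c_G(x)=D+1$, i.e., $w(x)=1$, which is (c). Then $\merho_G(B)=D$; if $B\subsetneq V_x$, the moreover clause of \Cref{gap lemma for even D} would require a unique edge leaving $B$, but $xy$ is already such an edge, so there would be no edges between $B$ and $V_x\setminus B$ inside $G_x$, contradicting the connectivity of $G_x$. Hence $B=V_x$ and $\merho_G(V_x)=D$, establishing (b); the same argument applies to $y$.

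For the weight bound in (a), applying the same dichotomy at $k=D/2$ (which violates $k\geq D/2+1$) forces $G_x^{(D/2)}$ to be colorable, producing a coloring of $G_x$ with $W(T_x)\leq(D+2)/2$; symmetrically for $G_y$. Non-colorability of $G$ forces $W(T_x)+W(T_y)\geq D+2$ for any pair of colorings (else their union is an $\eFF$-coloring of $G$), so both minima equal $(D+2)/2$ and every coloring of $G_z$ satisfies $W(T_z)\geq(D+2)/2$. For (d), \Cref{2-ve} together with $c(z)=D+1>0$ rules out $d(z)\leq 2$; if $d(z)=3$ and two of $z$'s edges in $G_z$ form a multi-edge to some vertex $u$, the multi-edge forces $u\in F_z$ in every coloring of $G_z$, shrinking $T_z$ to $\{z\}$ with $W(T_z)=1$, violating (a). Otherwise $z$ has two distinct neighbors $u_1,u_2$ in $G_z$, and (a) supplies a coloring of $G_z$ in which some $u_i$ (say $u_1$) lies in $M_z$. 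Restricting to $G_z-z$ and combining with any $\eFF$-coloring of $G_{3-z}$ gives an $\eFF$-coloring of $G-z$ in which $z$'s only possible $F$-neighbor is $u_2$; adding $z$ to $F$ creates no cycle, extending to an $\eFF$-coloring of $G$ and giving the desired contradiction. The main obstacle is the potential accounting at the capacity-reduction step, since the piecewise $\merho$ in \eqref{nn1} shifts $\Delta$ by one at the boundary $c=D/2$, and pinning down that integrality excludes the regime-crossing case exactly at $k=D/2+1$ is what isolates $c_G(x)=D+1$ rather than a range of capacities.
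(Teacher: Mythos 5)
Your proposal reaches all four conclusions and much of the machinery mirrors the paper's (capacity-reduction at the bridge endpoint, the weak gap lemma bound $\merho(S)\ge D$, the ``moreover'' clause of Lemma~\ref{gap lemma for even D} to force $B=V_x$ in (b)). There is, however, a logical ordering gap in how you obtain (c). Setting $k=D/2+1$ and invoking the dichotomy to conclude $c_G(x)=D+1$ is only valid once you know that $G_x^{(D/2+1)}$ has no $\eFF$-coloring --- equivalently, that $G_x$ admits no coloring with $W(T_x)\le D/2$. But that is exactly the weight bound in (a), which you prove only afterwards. As written, nothing at that point excludes the possibility that $G_x^{(D/2+1)}$ is colorable, in which case the critical set $B$ you manipulate does not exist. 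The fix is simple and available to you: either prove the weight bound in (a) before (b)/(c) (your argument for it uses only the dichotomy at $k=D/2$ plus the bridge-pairing inequality $W(T_x)+W(T_y)\ge D+2$, neither of which needs (b) or (c)); or, when proving (c), argue contrapositively that if $w(x)\ge 2$ then the dichotomy makes both $G_x^{(D/2+1)}$ and $G_y^{(D/2)}$ colorable, yielding colorings of $G_x$ and $G_y$ with $W(T_x)\le D/2$ and $W(T_y)\le D/2+1$, whose union across the bridge is an $\eFF$-coloring of $G$.

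Apart from that ordering issue, your route genuinely differs from the paper's in two places, both sound. For the weight bound in (a) the paper reduces capacity at $y$ and extracts a low-potential subset of $V_y$; you instead reduce only at $x$ and recover the bound by pairing minimum-weight colorings of the two sides of the bridge. Both work, and your version has the nice feature of reusing a single ``dichotomy'' computation at several values of $k$, at the cost of needing the combination step. For (d), the paper simply takes any coloring of $G_x-x$ and tries to put $x$ in $F$ (or, if $x$ has two $F$-neighbors, in $M$), contradicting (a) either way; your split on whether $z$'s two $G_z$-edges form a multi-edge, followed by a restriction and recombination across the bridge, is longer but also correct. One small point you should make explicit: when you invoke Lemma~\ref{gap lemma for even D} to get $\merho_G(B)\ge D$ you need $|B|\ge 2$; this holds because a single vertex has potential at least $\rho_0=2D>-2$ and so cannot support a critical subgraph.
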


\begin{proof} If say, $G_x$ has an $\eFF$-coloring $(M_x,F_x)$ such that
$x\in F_x$, then we take any $\eFF$-coloring $(M_y,F_y)$ of $G_y$, and 
$(M_x\cup M_y,F_x\cup F_y)$ is  an $\eFF$-coloring of $G$. 

Suppose now that $G_x$ has an $\eFF$-coloring $(M_x,F_x)$ such that
$x\in M_x$ and the component
$T_x$ of $G[M_x]$ containing $x$ satisfies $W(T_x)\leq \frac{D}{2}$.
Obtain $G'_y$ from $G_y$ by decreasing $c(y)$ by $\min\{c(y),\frac{D}{2}\}$. If $G'_y$ has an $\eFF$-coloring $(M_y,F_y)$, then by construction either $y\in F_y$ or the component
$T_y$ of $G[M_y]$ containing $y$ satisfies $W(T_y)\leq D+1- \frac{D}{2}$.
In both cases, $(M_x\cup M_y,F_x\cup F_y)$ is  an $\eFF$-coloring of $G$. Thus $G'_y$ has no $\eFF$-coloring. Then by the minimality of $G$, there is
$B\subseteq V_y$ with $\merho_{G'_y}(B)\leq -2$. So either $y\notin B$
and $\merho_{G_y}(B)\leq -2$ or $y\in B$ and $\merho_{G_y}(B)\leq -2+2\frac{D}{2}=D-2$.
In both cases we get a contradiction to \Cref{gap lemma for even D}. This proves (a).

\medskip
If $\merho(V_x)\geq D+1$, then by \Cref{gap lemma for even D} every  subset of $V_x$ containing $x$ has potential at least $D+1$.
In this case, 
the graph $G'_x$  obtained from $G_x$ by decreasing $c(x)$ by $\min\{c(x),1+\frac{D}{2}\}$ satisfies $\merho_{G'_x}(A)\geq -1$ for every $A\subseteq V_x$ containing $x$. Thus, $G'_x$ has an $\eFF$-coloring $(M_x,F_x)$. By (a),
$x\in M_x$ and $W(T_x)\geq \frac{D+2}{2}$. But this contradicts the definition of $G'_x$. Thus, (b) holds.

\medskip
If $w(x)\geq 2,$ then by~\eqref{nn1}, in the graph $G'_x$ defined in the paragraph above, the potential of $x$ (and hence of any subset of $V_x$  containing $x$) decreases by at most $D+1$. So we get a contradiction as in the paragraph above. Thus (c) holds.

\medskip
 Suppose $d(x)\leq 3$. 
Consider $G''=G_x-x$, which is a subgraph of $G$. 
So $G''$ has has an $\eFF$-coloring $(M'',F'')$.
If adding $x$ to $F''$ gives an $\eFF$-coloring of $G_x$, then this contradicts (a) since $x\not\in M_x$.
Otherwise, $x$ has two neighbors in $F''$, so 
we can add $x$ to $M''$.
Yet, since $w(x)= 1$ by (c), we obtain $W(\set{x})=1<\frac{D+2}{2}$, again contradicting (a).
 \end{proof}

\begin{lemma}\label{unique}
$G$ has at most one cut edge.
\end{lemma}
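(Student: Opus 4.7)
I argue by contradiction, assuming $G$ has two distinct cut edges $e_1 = x_1y_1$ and $e_2 = x_2y_2$. First I set up the bridge-tree decomposition of $G$ and choose $e_1$ so that $V(B_1) := V_{x_1}$ (the $x_1$-side of $G - e_1$) is a leaf block, i.e., contains no other cut edge; then $e_2$ lies inside $G_{y_1} := G - V(B_1)$ and remains a cut edge there. By Lemma~\ref{cut-edge} I collect the following data: $\merho_G(V(B_1)) = \merho_G(V_{y_1}) = D$, $w_G(y_1) = 1$ (so $c_G(y_1) = D+1$ and $\merho_G(y_1) = 4D+1$ via~\eqref{nn1}), $d(y_1)\geq 4$, and symmetric conclusions for $e_2$.

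The strategy is to construct an $\eFF$-coloring of $G$ and contradict criticality. Form the weighted graph $G^\star$ from $G_{y_1}$ by setting $c(y_1) = 0$, so $w_{G^\star}(y_1) = D+2$ and $\merho_{G^\star}(y_1) = 2D$. A direct computation using Lemma~\ref{cut-edge}(b) gives
\[
\merho_{G^\star}(V(G^\star)) \;=\; \merho_G(V_{y_1}) - (2D+1) \;=\; -(D+1).
\]
If $G^\star$ admits an $\eFF$-coloring $(M',F')$, then $y_1\in F'$ is forced by $c(y_1)=0$. Combining with any $\eFF$-coloring $(M_1,F_1)$ of the proper subgraph $G[V(B_1)]$ produces an $\eFF$-coloring of $G$: the bridge $e_1$ is neither an $M$-edge (since $y_1\in F'$) nor an $F$-edge (since $x_1\in M_1$ by Lemma~\ref{cut-edge}(a)), so $G[M]$ and $G[F]$ decompose as disjoint unions of forests and no $M$-component weight can exceed $D+1$.

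It therefore suffices to show $G^\star$ has an $\eFF$-coloring. Suppose not; then by the minimality of $G$, there is a critical subgraph $W\subseteq V(G^\star)$ with $\merho_{G^\star}(W)\leq -2$. Via the translation $\merho_G(W) = \merho_{G^\star}(W) + (2D+1)\mathbf{1}[y_1\in W]$, I case-analyze using \Cref{gap lemma for even D}. If $y_1\notin W$, then $\merho_G(W)\leq -2 < D$ with $W\subsetneq V(G)$ directly contradicts the gap lemma. If $y_1\in W$ but $W\subsetneq V(G^\star)$, then $W\cup V(B_1)\subsetneq V(G)$ and
\[
\merho_G(W\cup V(B_1)) \;\leq\; (2D-1) + D - (2D+1) \;=\; D-2 \;<\; D,
\]
again contradicting the gap lemma. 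The only remaining possibility is $W = V(G^\star)$, i.e., $G^\star$ itself is critical.

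The main obstacle is closing this last case. I would iterate the same construction with respect to the cut edge $e_2$ of the now-critical $G^\star$: form $G^{\star\star}$ from $G^\star - V_{x_2}$ by setting $c(y_2) = 0$. By the same dichotomy, either $G^{\star\star}$ has an $\eFF$-coloring (extending through the bridge $e_2$ to an $\eFF$-coloring of $G^\star$, contradicting its criticality), or $G^{\star\star}$ is itself critical. In the latter scenario, $G^{\star\star} = G[V(B_0)]$ carries both $c(y_1)=c(y_2)=0$ and satisfies $\merho_{G^{\star\star}}(V(B_0)) = (2D+1) - 2(2D+1) = -(2D+1)$. I expect the final contradiction to come from combining this $\merho$-bookkeeping with the degree lower bounds $d(y_1), d(y_2)\geq 4$ from Lemma~\ref{cut-edge}(d), which force enough edges inside $V(B_0)$ to drive $\merho_G(V(B_0))$ strictly below the $2D+1$ value demanded by the counterexample assumption $\merho_G(V(G)) \geq -1$.
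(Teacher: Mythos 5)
Your proposal takes a genuinely different route from the paper, and it has a gap at exactly the point you flag. The paper's argument is a one-shot merge: take the two \emph{outer} components $V_{y_1}, V_{y_2}$ of $G - e_1 - e_2$, form $H = (G - V_x) + y_1y_2$, and observe that $H$ is smaller, has $\merho_H(V(H)) = D + D - (2D+1) = -1$, and is not $\eFF$-colorable --- because any coloring restricted to $G_{y_j}$ would, by Lemma~\ref{cut-edge}(a), place each $y_j$ in $M$ inside a component of weight at least $\tfrac{D+2}{2}$, and the new edge $y_1y_2$ would then create an $M$-component of weight at least $D+2$. That makes $H$ a smaller counterexample, and one then rules out any proper critical subset of $V(H)$ using the weak gap lemma on each side.

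Your approach instead reduces $c(y_1)$ to $0$ and recurses. The setup is fine through the dichotomy ``$G^\star$ colorable or $W = V(G^\star)$,'' but the closing step is not a proof. Two concrete problems. First, the iteration applies Lemma~\ref{cut-edge}-type facts to $G^\star$ (e.g.\ that in any coloring of $G^\star[V_{x_2}]$ the boundary vertex lies in $M$ with a heavy component), yet Lemma~\ref{cut-edge} is proved only for the fixed minimum counterexample $G$, not for the derived weighted graph $G^\star$, so those facts do not transfer for free. Second, and more decisively, the final contradiction you ``expect'' cannot occur: by Lemma~\ref{cut-edge}(b) applied to $e_2$, $\merho_G(V_{y_2}) = D$, and since $V_{y_2} = V_{x_1} \cup B_0$ with exactly the edge $e_1$ between them and $\merho_G(V_{x_1}) = D$, we get
\[
\merho_G(V(B_0)) \;=\; \merho_G(V_{y_2}) - \merho_G(V_{x_1}) + (2D+1) \;=\; 2D+1
\]
exactly. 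So the value cannot be ``driven strictly below $2D+1$'' by degree considerations or anything else; it is pinned. The clean way to finish is the paper's: once you know $G^{\star}$ (or the merged graph $H$) must itself be the critical subgraph, compare its total potential against the induction hypothesis rather than hunting for slack in $\merho_G(V(B_0))$.
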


\begin{proof} Suppose $G$ has  cut edges $e_1=x_1y_1$ and $e_2=x_2y_2$.
Suppose further  that $x_1$ and $x_2$ are in the same component $G_x=(V_x,E_x)$ of $G-e_1-e_2$ and that the other two components are
$G_{y_1}$ and $G_{y_2}$ where $G_{y_i}$ contains $y_j$ for $j\in\set{1,2}$.

By \Cref{cut-edge}, for $j\in\set{1,2}$ $\merho(V_{y_j})=D$ and 
in each $\eFF$-coloring $(M_{y_j},F_{y_j})$ of $G_{y_j}$, ${y_j}\in M_{y_j}$ and the component
$T_{y_j}$ of $G[M_{y_j}]$ containing ${y_j}$ satisfies $W(T_{y_j})\geq \frac{D+2}{2}$. But then the multigraph  obtained from $G-V_x$ by adding edge $y_1y_2$ is a smaller counterexample to our theorem.
\end{proof}

\begin{lemma}[Strong gap lemma for $\merho$ and even $D$]\label{stronger gap lemma for even D}  
If $A\subsetneq V(G)$ with $2\leq |A|\leq |V(G)|-1$ and $|E(A,V(G)-A)|\geq 2$, then $\merho(A) \geq  2D+1$.
\end{lemma}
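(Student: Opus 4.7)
The plan is to argue by contradiction, following the template of the weak gap lemma (Lemma~\ref{gap lemma for even D}) but handling two boundary edges simultaneously. Suppose some violator $A$ exists and choose one that minimizes $\merho(A)$, and secondarily $|A|$. Lemma~\ref{gap lemma for even D} already gives $\merho(A)\geq D$, and the assumption $|E(A, V(G)\setminus A)|\geq 2$ upgrades this to $\merho(A)\geq D+1$. A short direct argument rules out $|V(G)\setminus A| = 1$: if $V(G)\setminus A = \set{v}$, then Lemma~\ref{2-ve} gives $d(v)\geq 2$, and in either case $d(v)=2$ (with $c(v)=0$ forced, so $\merho(v)=2D$) or $d(v)\geq 3$ (with $\merho(v)\leq 4D+1$), the identity $\merho(V(G)) = \merho(A)+\merho(v)-(2D+1)\,d(v)$ yields $\merho(V(G))\leq -2$, contradicting that $G$ is a counterexample.

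Fix edges $x_1y_1, x_2y_2 \in E(A, V(G)\setminus A)$ with $x_i\in A$, $y_i\notin A$; the case $x_1=x_2$ is handled with minor modifications by reducing capacity at the single vertex $x_1$ by $s_1+s_2$. In the main case $x_1\neq x_2$, choose nonnegative integers $s_1, s_2$ with $s_i\leq\min\set{c(x_i),D/2}$ that maximize $s_1+s_2$ subject to $2(s_1+s_2)\leq \merho(A)+1$ (with minor adjustments coming from the regime-dependent $-1$ in $\merho$). Form $G_A'$ from $G[A]$ by reducing $c(x_i)$ by $s_i$, and verify that every $B\subseteq A$ with $|B|\geq 2$ satisfies $\merho_{G_A'}(B)\geq -1$: this uses Lemma~\ref{gap lemma for even D} when $B$ contains only one of $\set{x_1,x_2}$ (giving $\merho_G(B)\geq D$, absorbed by $s_i\leq D/2$), the minimality of $A$ when $B\subsetneq A$ contains both $x_i$ (such $B$ has at least two edges out in $G$, so minimality forces $\merho_G(B)\geq \merho(A)+1$), and the choice of $s_1+s_2$ when $B=A$. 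Therefore $G_A'$ has an $\eFF$-coloring $(M',F')$, in which Lemma~\ref{prelim gap lemma}(iii) forces $x_1,x_2\in M'$ and the $M'$-component of $x_i$ has total weight at most $D+1-s_i$.

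Form $G''$ from $G[V(G)\setminus A]$ by reducing $c(y_i)$ by $\min\set{c(y_i), D+1-s_i}$. If $G''$ has an $\eFF$-coloring $(M'',F'')$, then $(M'\cup M'', F'\cup F'')$ is an $\eFF$-coloring of $G$: each combined $M$-component obtained by gluing across some $x_iy_i$ has total weight at most $(D+1-s_i)+s_i = D+1$, and no cycle forms since $F'$ contains no boundary vertex of $A$ by Lemma~\ref{prelim gap lemma}(iii). This contradicts $G$, so $G''$ contains a critical subset $B\subseteq V(G)\setminus A$ with $\merho_{G''}(B)\leq -2$. The final step bounds
\[
\merho_G(A\cup B) \leq \merho(A) + \merho_{G''}(B) + \Bigl(\text{capacity restorations at the $y_i\in B$}\Bigr) - (2D+1)\,|E_G(A,B)|
\]
by case analysis on which of $y_1, y_2$ lie in $B$; whenever $A\cup B\subsetneq V(G)$, the resulting value is either strictly less than $D$ (contradicting Lemma~\ref{gap lemma for even D}) or at most $2D$ with at least two edges out (contradicting the choice of $A$).

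The main obstacle is the remaining extremal subcase $A\cup B = V(G)$, which forces $B = V(G)\setminus A$ with $y_1, y_2\in B$. Here the straightforward bound only gives $\merho_G(V(G))\leq -1$ instead of the required $\leq -2$. Closing this $1$-unit gap requires exploiting the $-1$ correction in the definition of $\merho$ when a capacity crosses between the two regimes $c(v)\geq D/2$ and $c(v)\leq (D-2)/2$: a case analysis on the regimes of $c(x_1), c(x_2), c(y_1), c(y_2)$ combined with the choice of $s_i$ shows that at least one such regime crossing must occur (contributing an extra $-1$), and the few remaining ``no-crossing'' extremal configurations, in which each $c(x_i)$ and $c(y_i)$ is confined to $\set{0, D, D+1}$, are dispatched by a direct computation using $\merho(v)=2D$ for $c(v)=0$, $\merho(v)=4D+1$ for $c(v)=D+1$, and $\merho(e)=2D+1$.
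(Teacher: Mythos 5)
Your proposal takes a genuinely different route from the paper: you split $G$ symmetrically into $G_A'$ (capacities reduced at $x_1,x_2$) and $G''$ (capacities reduced at $y_1,y_2$), whereas the paper deletes $e_2$ and sets $c(y_2)=0$, and crucially the paper leans on the cut‑edge structure established in Lemmas~\ref{cut-edge} and~\ref{unique} (in particular observation~\eqref{x0y0}) to dispatch the cases. You do not use any cut‑edge structure, and this is where the gaps appear.

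There are two concrete issues. First, you cite Lemma~\ref{prelim gap lemma}(iii) to force $x_1,x_2\in M'$, but that lemma's hypothesis requires $S$ to have minimum potential among \emph{all} subsets with $2\leq |S|\leq |V(G)|-1$, not merely among those with two or more boundary edges. When the global minimizer is a one‑boundary‑edge set (e.g.\ a cut‑edge side $V_{x_0}$ with $\merho(V_{x_0})=D<D+1\leq\merho(A)$), the lemma says nothing about $A$. The paper re‑derives the boundary‑in‑$M'$ statement~\eqref{inM2} from scratch for $A$, and its proof of~\eqref{inM2} itself invokes Lemma~\ref{cut-edge} in the subcase $B\cup A\neq V(G)$: the bound $\merho_G(B\cup A)\leq -2+\merho_G(A)$ forces $B\cup A$ to be a cut‑edge side, and $\merho(V_y)=D$ from Lemma~\ref{cut-edge}(b) finishes it. Without the cut‑edge lemmas, this step does not close.

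Second, the ``main obstacle'' you acknowledge is not actually resolved by your sketch. Tracking the computation: with $s_i\leq D/2$, restoration at $y_i$ of size at most $2(D+1-s_i)-\delta_i$ (where $\delta_i=1$ unless $s_i=D/2$ and $c(y_i)=D+1$), and $|E(A,B)|=2$, one gets $\merho_G(V(G))\leq \merho(A)-2(s_1+s_2)-(\delta_1+\delta_2)$. Since $s_1+s_2\leq D$ and $\delta_i\leq 1$, reaching $\leq -2$ with $\merho(A)=2D$ needs $s_1=s_2=D/2$ \emph{and} $\delta_1=\delta_2=1$. But the configuration $\merho(A)=2D$, $s_1=s_2=D/2$, $c(x_1),c(x_2)\geq D$ (so no crossing on the $A$‑side, and the $G_A'$ constraint $\merho_{G_A'}(A)\geq -1$ is satisfied), and $c(y_1)=c(y_2)=D+1$ gives $\delta_1=\delta_2=0$ and only $\merho_G(V(G))\leq 0$. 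I see no constraint in your argument that rules this configuration out, and your claim that the ``no‑crossing'' cases confine $c(x_i),c(y_i)$ to $\{0,D,D+1\}$ and can be dispatched directly is asserted without a computation; the case $c(y_i)=D+1$ is precisely the bad one. The paper avoids this entirely because its $G'=G-e_2$ trick puts $y_2$ into $B$ with capacity $0$, and the cut‑edge observations carry the endgame.
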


\begin{proof} 
By \Cref{unique}, either $G$
is $2$-edge-connected or there is a unique cut edge $e_0=x_0y_0$. In the latter case,
if the components of $G-e_0$ are $G_{x_0}=(V_{x_0},E_{x_0})$ and $G_{y_0}=(V_{y_0},E_{y_0})$, then
$\merho(V_{x_0})=\merho(V_{y_0})=D$ by \Cref{cut-edge}. We will use the following observation:
\begin{equation}\label{x0y0}
    \mbox{\em If $V_{x_0}\subseteq  U \subset V(G)$ and $\merho( U )\leq 2D$, then
    $ U =V_{x_0}$ or $y_0\in  U $.
 }
\end{equation}
Indeed, if $V_{x_0}\subsetneq  U $ and $y_0\notin  U $, then
$ U' := U -V_{x_0}\neq \emptyset$ and $\merho( U' )\leq 2D-D=D$. This contradicts either \Cref{gap lemma for even D} or \Cref{unique}.
Thus~\eqref{x0y0} holds.

Assume that $\merho(A)$ is minimum  among all  $A\subsetneq V(G)$ with $2\leq |A|\leq |V(G)|-1$ and $|E(A,V(G)-A)|\geq 2$ and that
 $\merho(A) \leq  2D$. 

We now prove that
\begin{equation}\label{inM2}
    \parbox{14cm}{\em for each $x\in \Gamma(A) $  and any $\eFF$-coloring  $(M', F')$ of $G[A]$, $x\in M'$.}
\end{equation}
Indeed, suppose $x\in F'\cap \Gamma(A)$ and $y$ is a neighbor of $x$ in $V(G)-A$. Construct a graph $G'$ from $G-A$ as follows:
\begin{enumerate}[(a)]
    \item Identify all vertices of $F'$ to a single vertex $w$ and remove all loops.  Give $w$ capacity $0$. 
    \item Reduce the capacity of each vertex in $N(M')-A-y$ to $0$. 
    \item Remove all vertices in $M'$.
\end{enumerate}
Observe that if $u \notin A$ has two neighbors in $F'$, then $wu$ is a multi-edge.

If $G'$ has an $\eFF$-coloring  $(M'', F'')$, consider a coloring $(M, F)$ where  $M=M' \cup M''$ and $F= F' \cup F''-w$. 
  By construction, 
  neither $M$ nor $F$ has a cycle. Also no
   $(M,M)$-edges connect $A$ with $V(G)-A$. Thus $(M, F)$ is an $\eFF$-coloring of $G$, a contradiction.

So, $G'$ has no $\eFF$-coloring. By the minimality of $G$, there is $B\subseteq V(G')$ with
$\merho_{G'}(B)\leq -2$. If $w\in B$, then
$$\merho_{G}((B-w)\cup A)\leq \merho_{G'}(B)+\merho_{G}(A)-\merho_{G'}(\{w\})\leq -2+2D-2D= -2,$$
a contradiction. 
 Note that reducing the capacity of a vertex to $0$ changes the potential by at most the potential of an edge.
Otherwise $w\notin B$. 

If $y \in B$, then we did not account for edge $xy$,  so 
$$\merho_{G}((B-w)\cup A)\leq \merho_{G'}(B)+\merho_{G}(A) - \merho(xy) \leq -2 + 2D - (2D+1) =-3, $$
a contradiction.
 Thus, $y\notin B$.

If $B\cup A\neq V(G)$, then
$$\merho_{G}(B\cup A)\leq \merho_{G'}(B)+\merho_{G}(A)\leq -2+\merho_{G}(A),$$
 which by choice of $A$ implies that $|E(B \cup A, V(G)-B \cup A)| = 1$.
In particular, $E(B \cup A, V(G)-B \cup A)= \{xy\}$, as $x \in A$ and $y \notin B \cup A$.
So $B \cup A = V_x$ and $xy$ is a cut edge.
By Lemma \ref{cut-edge}, $\rho(V_y) = D$, so 
$$\rho(A \cup V_y) = \rho(A) + \rho(V_y) - \rho(xy) \leq 2D + D - (2D+1) = D-1,$$
which contradicts the  weak gap lemma, \Cref{gap lemma for even D}.
If $B\cup A= V(G)$, then since the potential of $y$ was not decreased,
$$\merho_{G}(B\cup A)\leq \merho_{G'}(B)+\merho_{G}(A)-\merho(xy)\leq -2+2D-(2D+1)=-3.$$
This contradiction proves~\eqref{inM2}.

We now prove the lemma. 
Suppose for $i\in\set{1,2}$, $x_i\in A,$ $y_i\in V(G)-A$, and edge $e_i\in E(G)$ connects $x_i$ with $y_i$.
 Let $G'=G-e_2$, where $c(y_2)$ is decreased to $0$. If $G'$ has an $\eFF$-coloring $(M',F')$, then since this contains  an $\eFF$-coloring of $G[A]$,
and by~\eqref{inM2}, $x_2\in M'$. 
 On the other hand, since $c_{G'}(y_2)=0$, $y_2\in F'$,  so $(M',F')$ is an $\eFF$-coloring of $G$, a contradiction.

So, $G'$ has no $\eFF$-coloring. By the minimality of $G$, there is $B\subseteq V(G')$ with $\merho_{G'}(B)\leq -2$. 
By the construction of $G'$,
 $y_2\in B$ and $\merho_{G}(B)\leq\merho_{G'}(B)+(2D+1)\leq 2D-1$.

 \medskip
{\bf Case 1:}  $B\cap A\neq\emptyset$.
If $\merho_{G}(A)\leq 
 \merho_{G}(A\cap B)$, then
 $$\merho_{G}(A\cup B)\leq \merho_{G}( B) -\merho_{G}(x_2y_2)\leq
 (2D-1)-(2D+1)=-2,
 $$
a contradiction. So suppose $\merho_{G}(A)> 
 \merho_{G}(A\cap B)$.  This implies  $A\cap B$ cannot be a single vertex since $\merho_G(A\cap B)< 2D$.
Then by the choice of $A$, $A\cap B\in \{V_{x_0},V_{y_0}\}$, say $A\cap B= V_{x_0}$. 
 Since $y_2\in B$, $ B\neq V_{x_0}$, so~\eqref{x0y0} implies $y_0\in B$. 
 Since $|E(A,V(G)-A)|\geq 2$, $ A\neq V_{x_0}$, so again~\eqref{x0y0} implies $y_0\in A$.
 This contradicts $A\cap B=V_{x_0}$.

 \medskip
{\bf Case 2:}  $B\cap A=\emptyset$.
If $y_1\notin B$, then
$$\merho_{G}(A\cup B)\leq \merho_{G}( B)+\merho_{G}( A)-\merho_{G}(x_2y_2)\leq (2D-1)+\merho_{G}( A)
-(2D+1)=-2+\merho_{G}( A).
 $$
Since $A\cup B\neq V(G)$, by the choice of $A$, we have $A\cup B\in \{V_{x_0},V_{y_0}\}$, say $A\cup B= V_{x_0}$. If $x_0\in A$, then $\merho(A\cup V_{y_0})\leq \merho(A)+D-(2D+1)\leq D-1$, contradicting \Cref{gap lemma for even D}. Otherwise,
$x_0\in B$, and $\merho(B\cup V_{y_0})\leq \merho(B)+D-(2D+1)\leq D-2$, again contradicting \Cref{gap lemma for even D}.
 
Finally if $y_1\in B$, then
$$\merho_{G}(A\cup B)\leq \merho_{G}( B)+\merho_{G}( A)-\merho_{G}(\{x_1y_1,x_2y_2\})\leq (2D-1)+\merho_{G}( A)
 -2(2D+1)\leq-3, $$
 a contradiction.
\end{proof}

\section{Common Reducible Configurations}
\label{sec-common-reducibles}

We prove two lemmas that apply to all theorems we wish to prove. 

\begin{lemma}\label{3 multi edge}
    If $d(u) = 3$ and $c(u) \geq 1$, then $u$ is not in a multi-edge.
\end{lemma}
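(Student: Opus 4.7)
The plan is to suppose for contradiction that $d(u)=3$, $c(u) \geq 1$, and $u$ is incident to a multi-edge $uv$; let $x$ denote the third neighbor of $u$, which may or may not coincide with $v$. If $x = v$ (a triple edge), then $u$ has only one distinct neighbor, so by criticality we take a coloring $(M,F)$ of $G-u$ and place $u$ in the class opposite to $v$: $u$ has no neighbors in its own class, so the forest condition holds and either $|N[u]\cap M|=1\leq c(u)$ (for $\dFF$) or $u$ is a singleton $M$-component of weight $w(u)\leq D+1$ (for $\eFF$). This extension contradicts criticality.

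In the main case $x \neq v$, the strategy is to define $H$ from $G-u$ by reducing $c(x)$ to $0$, thereby forcing $x \in F$ in any coloring of $H$. If $H$ has a coloring $(M,F)$, then $x \in F$, and I can extend to $G$ as follows. If $v \in M$, put $u \in F$; then $u$'s only $F$-neighbor is $x$, connected by a single edge, so no cycle is created. If $v \in F$, put $u \in M$; then $u$ has no $M$-neighbors, so $|N[u]\cap M| = 1 \leq c(u)$ for $\dFF$, and the $M$-component of $u$ is just $\{u\}$ with $w(u)\leq D+1$ for $\eFF$. Either way we contradict the fact that $G$ has no coloring.

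Hence $H$ has no coloring, so by the minimality of $G$ there exists $B \subseteq V(H)$ with $\rho_H(B) \leq \rho_*$; note $|B|\geq 2$ since a single vertex of reduced capacity still has potential $\rho_0 > \rho_*$, and $|B|\leq |V(G)|-1$ since $u \notin B$. Transferring back, $\rho_G(B) \leq \rho_H(B) + \alpha c_G(x)\cdot \mathbb{1}[x\in B]$. For $\rho \in \{\mdrho, \sdrho\}$ and for $\merho$ with odd $D$, plugging into Fact~\ref{relative potentials} gives $\rho_G(B) \leq \rho_* + \alpha(D+1) < \rho_0 + 2 + \rho_*$, directly contradicting \Cref{common gap lemma}.

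The main obstacle is the $\merho$ case with even $D$, where the analogous bound $\rho_G(B)\leq 2D-1$ is not refuted by the weak gap lemma (\Cref{gap lemma for even D}). Here I would invoke the strong gap lemma (\Cref{stronger gap lemma for even D}) by showing $|E(B,V(G)-B)|\geq 2$: if $v\in B$, then the two parallel $uv$ edges already give two boundary edges; if $v\notin B$ but $x\in B$, the edge $ux$ gives one boundary edge, and I would combine this with the equality clause of \Cref{gap lemma for even D} (forcing $\rho(B)=D$ and $\rho(V(G)-B)=D$, hence $\rho(V(G))\leq D+D-(2D+1)=-1$ together with the structural constraint that $V(G)-B\supseteq\{u,v\}$) to rule out the remaining borderline case. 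The case $x\notin B$ is easy since then $\rho_G(B)\leq \rho_*$ immediately contradicts either gap lemma.
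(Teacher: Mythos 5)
Your approach mirrors the paper's in structure: delete $u$, reduce the capacity of the simple-edge neighbor, argue that a coloring of the reduced graph extends to $G$, and then bound the potential of the critical vertex set $B$ and contradict a gap lemma. Zeroing out $c(x)$ outright (versus the paper's reduction by $\min\{c,w(u)\}$) gives the same worst-case potential increment $\rho_G(B)-\rho_H(B)\le \rho_+-\rho_0=\rho_e$, so this difference is immaterial; your handling of the triple-edge case and of $\rho\in\{\mdrho,\sdrho\}$ and $\merho$ with odd $D$ is correct and essentially identical to the paper's argument.

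The gap is in the $\merho$, even $D$ case, in the subcase where the multi-edge partner $v$ lies outside $B$ but your $x$ lies in $B$. There you must rule out $|E(B,V(G)-B)|=1$, and your proposed route via the equality clause of \Cref{gap lemma for even D} does not do it: first, that clause runs in the wrong direction (it infers $|E(S,V(G)-S)|=1$ from $\merho(S)=D$, not conversely); and even if one separately establishes $\merho(B)=\merho(V(G)-B)=D$ (which does follow from \Cref{cut-edge}(b)), the resulting computation $\merho(V(G))=D+D-(2D+1)=-1$ is not a contradiction, since a counterexample $G$ is only required to satisfy $\merho(V(G))\ge\rho_*+1=-1$. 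The correct way to close the case is structural, using part (d) of \Cref{cut-edge} rather than parts (a),(b): if $|E(B,V(G)-B)|=1$, then the unique boundary edge is the simple edge from $u$ into $B$, hence a cut edge of $G$, so both of its endpoints --- in particular $u$ --- must have degree at least $4$, contradicting $d(u)=3$. That observation is precisely how the paper's proof dispatches the even-$D$ case; you mention a ``structural constraint'' but then try to close the argument with potential arithmetic, which leaves the gap.
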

\begin{proof}
By way of contradiction, suppose that $ux$ is a multi-edge, $uy$ is a simple edge, and $u$ has no other neighbors.
Let $G'$ be $G-u$ with the capacity of $y$ reduced by $\min\{c(y),w(u)\}$.
Observe that any coloring $(M, F)$ of $G'$ can be extended to $G$ by adding $u$ to the  color class not containing $x$ (recall that $w(u) \geq 1$, so if $y \in M$, then its capacity has been reduced by at least $1$).
So $G'$ contains a critical subgraph on vertex set $B$ and $y\in B$.
By the minimality of $G$, we have that $\rho_{G'}(B) \leq \rho_*$.

{
By Fact~\ref{relative potentials}~(i) and~(ii), $\rho_+ - \rho_0 = \rho_0 + 1$.
Observe that 
$$\rho_G(B) \leq  \rho_* + (\rho_+ - \rho_0)  =  \rho_0 + (2 + \rho_*) - 1. $$
Also, as $u$ is not in $G'$, it follows that $u$ is not in $B$, and so $B$ is a proper vertex subset of $G$.
If $\rho \neq \merho$ or $D$ is odd, then this contradicts Lemma \ref{common gap lemma}.

So assume $\rho = \merho$ and $D$ is even.
Lemma \ref{cut-edge} implies that $uy$ is not a cut edge because $d(u) = 3$.
So the above bound on potential contradicts Lemma~\ref{stronger gap lemma for even D}.}
%
%
%
\end{proof}

\begin{lemma}\label{D+2}
If $xy \in E(G)$ and $d(x) = d(y) = 3$, then $w(x) + w(y) \geq D+2$.
\end{lemma}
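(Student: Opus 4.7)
I would argue by contradiction, assuming $w(x)+w(y)\leq D+1$, which gives $c(x)+c(y)\geq D+3$ and, since $c\leq D+1$ for every vertex, forces $c(x),c(y)\geq 2$. Then \Cref{3 multi edge} applies to both $x$ and $y$, so all edges incident to them are simple; write $N(x)=\{y,x_1,x_2\}$ and $N(y)=\{x,y_1,y_2\}$.

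The plan is to delete $\{x,y\}$ and extend a coloring of $G'=G-\{x,y\}$ back to $G$, contradicting the criticality of $G$. First, if $G'$ itself had no coloring, then by the minimality of $G$ it would contain a critical subgraph on vertex set $B\subseteq V(G')$ with $\rho_G(B)\leq\rho_*$; since $B$ is a proper subset of $V(G)$ and $|B|\geq 2$ (the single-vertex case is trivial), this contradicts the appropriate gap lemma---\Cref{common gap lemma}, or \Cref{stronger gap lemma for even D} when $\rho=\merho$ and $D$ is even. Hence $G'$ admits a coloring $(M',F')$.

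To show that some such coloring extends, I would case-analyze how $\{x_1,x_2,y_1,y_2\}$ splits between $M'$ and $F'$. The crucial observation is that placing both $x,y\in M$ as the isolated two-vertex component $\{x,y\}$ (connected by edge $xy$) simultaneously satisfies the $\eFF$ weight bound (total weight $w(x)+w(y)\leq D+1$) and the $\dFF$ degree bound (each of $|N[x]\cap M|$ and $|N[y]\cap M|$ equals $2$, at most $c(x)$ and $c(y)$ respectively since both capacities are $\geq 2$). This placement works directly when $\{x_1,x_2,y_1,y_2\}\subseteq F'$. In other configurations, one uses mixed placements---e.g., $x\in F$ as a leaf of $F'$ at $\{x_1,x_2\}\cap F'$ and $y\in M$ joined to at most one $M'$-neighbor---where the slack $c(x)+c(y)\geq D+3$ provides enough room in the degree or component-weight bound.

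The main obstacle is the $\eFF$ subcase in which some $y_j\in M'$ belongs to an $M'$-component of weight exceeding $c(y)$, so attaching $y$ there via the direct extension would overflow. I would resolve this either by picking an alternative placement that sends $y$ to $F$ (safe because the forest $F'$ at worst gains a leaf) or, failing that, by modifying $(M',F')$ before extension---moving one vertex of the heavy $M'$-component into $F'$, a move that still yields a coloring of $G'$ by the minimality-driven flexibility. Exhaustively verifying each subcase using $c(x)+c(y)\geq D+3$ is the bulk of the technical work, but every case terminates with a valid placement, yielding the desired contradiction with criticality.
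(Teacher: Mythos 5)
Your approach has a genuine gap. After deleting $\{x,y\}$ you take $G' = G-\{x,y\}$ with the original vertex capacities and then try to extend some coloring of $G'$ to $G$. (Incidentally, since this $G'$ is a proper subgraph of the critical graph $G$ with unchanged weights, it automatically has a coloring; your ``if $G'$ has no coloring'' branch is vacuous.) The real difficulty is that you have no control over the coloring $(M',F')$ near $x_1,x_2,y_1,y_2$. The slack $c(x)+c(y)\geq D+3$ only constrains $x$ and $y$; it says nothing about whether an outside neighbor $x_i\in M'$ already has $|N_{G'}[x_i]\cap M'|=c(x_i)$, in which case putting $x$ in $M$ overflows the $\dFF$ constraint at $x_i$, not at $x$. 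The analogous obstruction in the $\eFF$ setting is that the $M'$-component containing $x_i$ may already have weight $D+1$. When both $x$-side and $y$-side $M'$-neighbors are saturated, you are forced to place $x$ and $y$ in $F$, and that creates a cycle whenever the relevant $F'$-neighbors lie in a common $F'$-tree. Your proposed escape --- ``modifying $(M',F')$ before extension, moving one vertex of the heavy $M'$-component into $F'$'' --- is not a legal move in general (it can create a cycle in $F'$), and ``minimality-driven flexibility'' is not an argument; you would need a concrete recoloring procedure and a proof that it works.

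The paper builds the needed slack into $G'$ instead: it deletes $x$ and $y$ and, crucially, reduces the capacity of each of $x$'s other neighbors $u,v$ by $\min\{c(\cdot),w(x)\}$. With that reduction, in any coloring of $G'$ the constraints at $u,v$ automatically leave room for $x$ to join $M$, and a simple greedy extension (color $y$ first, then put $x$ in the minority class among its three colored neighbors) goes through in every case. The cost is that this modified $G'$ might not have a coloring; that case is handled by a short potential computation on the critical set $B$ (or $B\cup\{x\}$), using $w(x)\leq (D+1)/2$, to obtain a contradiction with the gap lemmas. That capacity-reduction step is the essential idea your proof is missing, and without it the case analysis does not close.
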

\begin{proof}
By way of contradiction, suppose that $w(x) + w(y) \leq D + 1$.
By symmetry, suppose $w(x) \leq (D+1)/2$, so $c(x)\geq 2$. 
Note that by~\Cref{3 multi edge}, $x$ is not in a multi-edge.
Let $N(x) = \{y, u, v\}$ and $N(y) = \{x,a,b\}$.
Let $G'$ be $G-x,y$ where for $z \in \{u,v\}$ the capacity of $z$ is decreased by $\min\{c(z), w(x)\}$.

We claim that if $G'$ has a coloring $(M', F')$, then we can color $G$ by the following steps.
\begin{enumerate}
    \item If  $\{a,b\} \subseteq F'$, then add $y$ to $M'$.  Otherwise add $y$ to $F'$.
    \item After $y$ has been colored, add $x$ to the color class that appears the least among its three neighbors.
\end{enumerate}
Because for each of the two above steps we do not add more than one edge whose endpoints are the same color, at no point we created a cycle in $M'$ or $F'$.
The first step does not expand any components in $M'$.
So if the additional restriction on the first forest is violated, then it happened when $x$ was added to $M'$.
If $x$ was added to $M'$, then by construction it had at most one neighbor $z$ that was also in $M'$.
If $z = y$, then the component in $M'$ that contains $x$ is induced on $\{x,y\}$, and by assumption it has weight at most $D+1$.
If $z \in \{u,v\}$, then the change in capacity in the construction of $G'$ offsetted the additional weight caused by adding $x$ to the component in $M'$.
This proves the claim.
The claim contradicts the choice of $G$, so $G'$ contains a critical subgraph on vertex set $B$.

By the minimality of $G$, we have that $\rho_{G'}(B) \leq \rho_*$.
Let $t = |B \cap \{u,v\}|$.
As $G$ is critical, $t \geq 1$.
If $t=1$, then let $B' = B$ and observe $\rho_G(B) = \rho_{G'}(B) + (\rho_G(B) - \rho_{G'}(B)) \leq \rho_* + \alpha w(x) < \rho_0-1$.
If $t=2$, then let $B' = B \cup \{x\}$ and observe (using Fact \ref{relative potentials} and $c(x) \geq D+ 2 - (D+1)/2 > D/2$).
\begin{eqnarray*}
    \rho_G(B') & = & \rho_{G'}(B) + (\rho_G(B) - \rho_{G'}(B)) + \rho(x) - 2\rho_e \\
            & \leq & \rho_* + 2 \alpha w(x) + (\rho_+ - \alpha (w(x)-1)) - 2\rho_e \\
            & = & \rho_* + \alpha (w(x) + 1) - 1 \\
            & = & \alpha w(x) - 1 < \rho_0-1.
\end{eqnarray*}
In either case we find a vertex subset $B'$ with potential at most $\rho_0$ that is proper (because it does not contain $y$) and an edge $e'$ incident with $x$ that has exactly one endpoint in $B'$.
\begin{itemize}
    \item If $D$ is even and $\rho = \merho$, then because $d(x) = 3$, Lemma \ref{cut-edge} implies that $e'$
    is not a cut edge.  The bound $\rho_G(B') < \rho_0 -1$ contradicts Lemma \ref{stronger gap lemma for even D}.
    \item Otherwise the bound $\rho_G(B') < \rho_0-1$ contradicts Lemma \ref{common gap lemma}.
\end{itemize}
This proves the lemma.    
\end{proof}

\section{$\dFF$-colorings}\label{sec-dFF-coloring}

In this section we finish the proofs to Theorem \ref{thm:multi-degreeSc} when $D\geq 2$ and \Cref{thm:simple-degreeSc}.
In particular, this section only deals with $\dFF$-colorings, and so $\rho \in \{\mdrho, \sdrho\}$.
Observe that for $\rho \in \{\mdrho, \sdrho\}$ we have that $\rho_e = \alpha(D+1)$.

\begin{lemma}\label{max degree d3}
    If $D \geq 2$ and $d(u) = 3$, then $c(u) \leq 1$.    
\end{lemma}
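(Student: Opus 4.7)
The plan is to argue by contradiction: suppose $d(u)=3$ and $c(u)\geq 2$. Since $c(u)\geq 1$, Lemma~\ref{3 multi edge} forbids $u$ from lying on a multi-edge, so $u$ has three distinct neighbors $x_1,x_2,x_3$. I would then build a smaller weighted graph $G'$ from $G-u$ by reducing $c(x_i)$ by $\min\{c(x_i),1\}$ for each $i$, so that a spare unit of capacity remains at each $x_i$ in case $u$ is later reinserted into $M$.

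Next I would show that any $\dFF$-coloring $(M',F')$ of $G'$ extends to $G$, contradicting criticality. If at most one of $x_1,x_2,x_3$ lies in $F'$, placing $u$ in $F$ adds at most one edge to $G[F']$ and preserves the forest property. Otherwise $|N(u)\cap M'|\leq 1$; placing $u$ in $M$ creates no cycle, satisfies $|N[u]\cap M|\leq 2\leq c_G(u)$ because $c(u)\geq 2$, and for each $x\in N(u)\cap M'$ the capacity reduction forces $c_G(x)\geq 1$ (otherwise $x$ would be pinned to $F'$) and turns the $G'$-constraint $|N_{G'}[x]\cap M']|\leq c_G(x)-1$ into the valid $G$-constraint $|N_G[x]\cap M|\leq c_G(x)$. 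Hence $G'$ has no coloring, and by the minimality of $G$ there exists $B\subseteq V(G')$ with $\rho_{G'}(B)\leq \rho_*$.

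Finally, because each capacity reduction lowers the vertex potential by exactly $\alpha$, we have $\rho_G(B)\leq \rho_*+3\alpha$. Since $u\notin B$ and any single vertex has potential at least $\rho_0>\rho_*$, we get $2\leq |B|\leq |V(G)|-1$. Using $\rho_0=\alpha(D+1)-1$ from Fact~\ref{relative potentials}, one computes $\rho_0+(2+\rho_*)-(\rho_*+3\alpha)=\alpha(D-2)+1>0$ for every $D\geq 2$, so $\rho_G(B)<\rho_0+(2+\rho_*)$, contradicting Lemma~\ref{common gap lemma}. The only real obstacle is calibrating the capacity reduction: reducing by too little would fail in the tight case when a neighbor of $u$ in $M'$ is already saturated, while reducing by too much would inflate the potential gap beyond what the gap lemma can absorb when $D=2$, and subtracting exactly one unit from each $c(x_i)$ hits both marks uniformly for the multigraph ($\alpha=2$) and simple graph ($\alpha=3$) settings.
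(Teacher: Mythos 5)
Your proposal is correct and follows the same overall scheme as the paper's proof: delete $u$, reduce the capacity of each of $u$'s three (distinct, by Lemma~\ref{3 multi edge}) neighbors by $\min\{1,c(\cdot)\}$, show that a coloring of the reduced graph extends to $G$ (using $c(u)\geq 2$ for the case $u$ lands in $M$, and the restored unit of capacity at each $x_i$ for the neighbor side), so that the minimality of $G$ produces a critical vertex set $B$ with $\rho_{G'}(B)\leq\rho_*$, $u\notin B$, $2\leq |B|\leq |V(G)|-1$, and then use the gap lemma. The only difference from the paper's argument is that you close with a single, uniform bound $\rho_G(B)\leq\rho_*+3\alpha$, whereas the paper splits on $t=|B\cap N(u)|$: for $t\leq 2$ it uses $\rho_G(B)\leq\rho_*+2\alpha$ and the gap lemma, and for $t=3$ it instead bounds $\rho_G(B\cup\{u\})$ below $\rho_*$. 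Your observation that $\rho_0+(2+\rho_*)-(\rho_*+3\alpha)=\alpha(D-2)+1\geq 1>0$ for all $D\geq 2$ makes the case split unnecessary, so your version is a minor but genuine streamlining of the same argument; the paper's split would be needed only if one tried to push this lemma down to $D=1$, which is outside its scope.
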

\begin{proof}
Suppose $N(u) = \{x,y,z\}$.
By way of contradiction, suppose $c(u) \geq 2$.
By Lemma \ref{3 multi edge}, $x,y,z$ are distinct.
Let $G'$ be $G - u$ where each $v \in \{x,y,z\}$ has its capacity reduced by $\min\{1,c(z)\}$.
An $\dFF$-coloring of $G'$ can be extended to an $\dFF$-coloring of $G$ by adding $u$ to the color class that appears the least on its neighbors.
So $G'$ contains an $\dFF$-critical subgraph $G''$ on vertex set $B$.
By the minimality of $G$, we have that $\rho_{G''}(B) \leq \rho_*$.

Let $t = |B \cap N(u)|$.
As $G$ is $\dFF$-critical, $t \geq 1$.
If $t \leq 2$, then $\rho_G(B) \leq \rho_* + 2 \alpha = \alpha < \rho_0$.
As $u \notin B$, this contradicts Lemma \ref{common gap lemma}.
If $t=3$, then 
\begin{eqnarray*}
  \rho_G(B \cup \{u\}) &\leq& \rho_{G''}(B) + (\rho_{G}(B) - \rho_{G''}(B)) + \rho(u) - 3\rho_e \\
                & \leq & \rho_* + 3 \alpha + \rho_+ - 3\rho_e \\
                & = & \rho_* + 3 \alpha - 1 - \rho_e \\
                & = & \rho_* + (2- D)\alpha - 1.  
\end{eqnarray*}
Because $D \geq 2$, we have $\rho(B \cup \{u\}) <  \rho_*$.
This contradicts the choice of $G$ if $B \cup \{u\} = V(G)$, and it contradicts Lemma \ref{common gap lemma} otherwise.
\end{proof}

\begin{proof}[Proof of Theorem \ref{thm:multi-degreeSc} when $D \geq 2$ and Theorem \ref{thm:simple-degreeSc}]
Recall that $\rho(G) = \sum_{u \in V(G)}ch(u)$ and that $ch(u) = \rho(u) - d(u)\rho_e/2$.
Also, $ch(u) \leq -1$ unless $d(u) = 3$.
By Lemma \ref{max degree d3}, if $d(u) = 3$, then 
$$ch(u) \leq \rho_0 + \alpha - 3\rho_e/2 = \alpha - \rho_e/2 - 1 = \alpha(1-(D+1)/2)-1 < -1.$$
So $\rho(G) \leq \sum_{u \in V(G)}(-1) \leq \rho_*$, which proves part 1 of~\Cref{potential theorem} when $D \geq 2$ and part 2 of Theorem \ref{potential theorem}.
\end{proof}

An $\ndFF{1}$-coloring is equivalent to an $\neFF{1}$-coloring---partitioning into a matching and a forest.
Recall, when $D=1$ we have $\mdrho = \merho$.
The proof of Theorem \ref{thm:multi-degreeSc} when $D = 1$ will follow from the proof of Theorem \ref{thm:multi-orderSc}~(i), presented in Section \ref{edge odd section}.

\section{$\eFF$-colorings}\label{sec-eFF-coloring}

In this section we prove \Cref{thm:multi-orderSc}, so we only deal with multigraphs and potential function $\merho$.
The proof for odd $D$ required a bit more work, which we put in a subsection. 

\begin{lemma}\label{edges d3}
    If $d(v) = 3$, then $w(v) \geq (D+1)/2$.
\end{lemma}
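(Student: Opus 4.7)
The plan is to mimic the treatment of \Cref{max degree d3} but carry along the component-weight constraint. Assume for contradiction that $2 w(v) \leq D$, so $c(v) \geq \lceil D/2 \rceil + 2 \geq 3$. In particular $c(v) \geq 1$, so by \Cref{3 multi edge} the vertex $v$ is incident to no multi-edge; let $N(v) = \{x, y, z\}$ with $x, y, z$ distinct. Form $G' := G - v$ by lowering the capacity of each $u \in \{x, y, z\}$ by $\min\{c(u), w(v)\}$.

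I would then show that any $\eFF$-coloring $(M', F')$ of $G'$ extends to $G$. By pigeonhole one color class contains at most one neighbor of $v$, so I can place $v$ in that class without creating a cycle. When $v$ goes into $M$: if it has no $M'$-neighbor then the new component is the singleton $\{v\}$, which has weight $w(v) \leq \lfloor D/2\rfloor \leq D+1$; otherwise its unique $M'$-neighbor $u$ must satisfy $w_{G'}(u) \leq D+1$, forcing $c_G(u) \geq w(v)$, so the $G$-weight of $u$'s $G'$-component is at most $D+1-w(v)$, leaving exactly enough room for $v$. Hence $G'$ has no $\eFF$-coloring, and by minimality of $G$ it contains an $\eFF$-critical subgraph on some vertex set $B$ with $\merho_{G'}(B) \leq -2$.

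Set $t := |B \cap \{x, y, z\}|$. Each capacity reduction decreases $\merho$ by at most $2 w(v)$ (the piecewise jump in~\eqref{nn1} can only make the decrease smaller), so $\merho_G(B) \leq -2 + 2 t w(v)$. The case $t = 0$ already contradicts a gap lemma since $v \notin B$. For $t = 1$ the bound $\leq D - 2$ lies strictly below $D$, contradicting \Cref{gap lemma for even D} when $D$ is even and \Cref{common gap lemma} when $D$ is odd. For $t = 2$ the bound $\leq 2D - 2 < 2D + 1$; the two edges from $v$ into $B$ witness $|E(B, V(G) - B)| \geq 2$, enabling \Cref{stronger gap lemma for even D} (even $D$) or \Cref{common gap lemma} (odd $D$) to derive a contradiction.

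The remaining case $t = 3$ requires considering $B \cup \{v\}$ instead of $B$. Using that $c(v) \geq \lceil D/2 \rceil + 2$ places $v$ in the upper branch of~\eqref{nn1} when $D$ is even, a direct computation gives
\[
 \merho_G(B \cup \{v\}) \;\leq\; (-2 + 6w(v)) + \merho(v) - 3\,\merho_e \;\leq\; 4w(v) - 2D - 2 \;\leq\; -2
\]
for even $D$, and analogously $\leq -5$ for odd $D$ (where $\merho_e = 2D + 2$ and $\merho(v) = 2D+1+2c(v)$). If $B \cup \{v\} = V(G)$ this contradicts $\merho(V(G)) \geq -1$; otherwise it contradicts a gap lemma. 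The main delicacy I anticipate is the bookkeeping around the threshold of~\eqref{nn1} for even $D$, together with verifying that the strong gap lemma really applies in the $t = 2$ case by exhibiting two edges across the boundary of $B$.
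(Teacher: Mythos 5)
Your proof is correct and follows essentially the same strategy as the paper's: reduce the capacities of $v$'s three neighbors, extend a coloring by placing $v$ in the less-popular class, and then split on $t = |B \cap N(v)|$ with the gap lemmas. The calculations for all four values of $t$ check out, including the $t=3$ case for both parities of $D$, and your observation that the piecewise jump in~\eqref{nn1} can only shrink the potential change is the right bookkeeping for even $D$. The one small organizational difference is that the paper lumps $t \in \{1,2\}$ together with the single bound $\rho_* + D\alpha < \rho_0$ and, for even $D$, appeals to \Cref{stronger gap lemma for even D} in both subcases, which requires the extra observation (via \Cref{cut-edge}(d)) that edges incident to a $3$-vertex are not cut edges. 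By treating $t=1$ separately with the sharper bound $\leq D-2$ and the weak gap lemma (\Cref{gap lemma for even D}), you sidestep that cut-edge argument entirely, reserving the strong gap lemma for $t=2$ where the two crossing edges from $v$ are already visible. That is a mild but genuine streamlining.
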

\begin{proof}
    By way of contradiction, suppose $d(v) = 3$ and $w(v) \leq D/2$.
    By Lemma \ref{3 multi edge}, $v$ is not in a multi-edge.
    Let $N(v) = \{x,y,z\}$.

    Construct $G'$ from $G-v$ by decreasing the capacity of each $u \in \{x,y,z\}$ by $\min\{c(u), w(v)\}$.
    If $G'$ has an $\eFF$-coloring, then it can be extended to $G$ by giving $u$ the color that appears the least among its neighbors.
    This contradicts the choice of $G$, so $G'$ contains an $\eFF$-critical subgraph on vertex set $B$.
    By the minimality of $G$, $\rho_{G'}(B) \leq \rho_*$.

    Observe that as $d(v) = 3$, by Lemma \ref{cut-edge} none of the edges incident with $v$ are cut edges when $D$ is even. 

    Let $t = |B \cap N(v)|$.
    As $G$ is critical, $t \geq 1$.
    If $t \leq 2$, then 
    $$\rho_G(B) \leq \rho_* + t \alpha w(v) \leq \rho_* + D \alpha \leq \rho_* + \rho_e - 1 = \rho_* + \rho_0 < \rho_0.$$
    This contradicts Lemma \ref{common gap lemma} or \ref{stronger gap lemma for even D}.

    So $t = 3$.
    Observe that because $w(v) \leq D/2$, we have that $\rho(v) = \rho_+ - \alpha (w(v)-1)\leq \rho_+-\alpha w(v)$.
    It follows that 
    \begin{eqnarray*}
        \rho_{G}(B \cup \{v\})  & = & \rho_{G'}(B) + (\rho_G(B) - \rho_{G'}(B)) + \rho(v) - 3 \rho_e \\
                & \leq & \rho_* + 3 \alpha w(v) + (\rho_+ - \alpha w(v)) - 3 \rho_e \\
                & = & \rho_* + 2 \alpha w(v) - 1 - \rho_e \\
                & \leq & \rho_* + D \alpha - 1 - \rho_e < \rho_*.
    \end{eqnarray*}
    If $B \cup \{v\} = V(G)$, then this contradicts the choice of $G$.
    Otherwise this contradicts Lemma \ref{common gap lemma} or \ref{gap lemma for even D}.
\end{proof}

\begin{proof}[Proof of Theorem \ref{thm:multi-orderSc}.(ii)]
Assume $D$ is even, and therefore $D \geq 2$.
Recall that $\rho(G) = \sum_{u \in V(G)}ch(u)$ and that $ch(u) = \rho(u) - d(u)\rho_e/2$.
Also, $ch(u) \leq -1$ unless $d(u) = 3$.
By Lemma \ref{edges d3}, if $d(u) = 3$, then $w(u) \geq D/2 + 1$, which implies $c(u) \leq D+2 - (D/2+1) = D/2+1$.
So if $d(u) = 3$ and $c(u) \in \{D/2, D/2+1\}$, then 
$$ch(u) \leq (2D + 2(D/2+1)-1) - 3(2D+1)/2 = -1/2.$$
Otherwise if $d(u) = 3$ and $c(u) \leq D/2-1$, then 
$$ch(u) \leq (2D + 2(D/2-1)) - 3(2D+1)/2 = -7/2.$$
So $\rho(G) \leq \sum_{u \in V(G)}(-1/2) \leq -3/2$.
As $\rho(G)$ is integral, we have $\rho(G) \leq \rho_*$, which proves part 4 of Theorem \ref{potential theorem}.
\end{proof}

\subsection{Odd $D$}\label{edge odd section}

In this section we consider $\eFF$-colorings on multigraphs for odd $D$.
Let $T$ denote the set of $3$-vertices with weight $(D+1)/2$; by Lemma \ref{edges d3} this is the smallest possible weight for a $3$-vertex.
By Lemma \ref{3 multi edge}, no vertex in $T$ is incident with a multi-edge.
By Lemma \ref{D+2}, $T$ is an independent set.

\begin{lemma}\label{multi-4vx-3333o}
$G$ has no $4$-vertex $u$ with $w(u)\leq \frac{D+1}{2}$ such that $N(u) \subseteq T$.
\end{lemma}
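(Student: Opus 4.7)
My plan is to argue by contradiction: suppose $u$ is a $4$-vertex with $w(u) \le (D+1)/2$ and $N(u) = \{v_1, v_2, v_3, v_4\} \subseteq T$, so each $v_i$ is a $3$-vertex of weight $(D+1)/2$. The remarks immediately preceding the lemma (via Lemmas~\ref{3 multi edge} and~\ref{D+2}) already tell us that the four edges $uv_i$ are simple and the $v_i$'s are pairwise non-adjacent, so I may write $N(v_i) = \{u, a_i, b_i\}$ with $a_i \ne b_i$ and $\{a_i, b_i\} \cap T = \emptyset$ (using that $d(u) = 4$ forces $u \notin T$).

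I would set $S = \{u, v_1, v_2, v_3, v_4\}$ and construct $G'$ from $G - S$ by reducing $c(x)$ by $(D+1)/2$ (capped at $c(x)$) once for every incidence of $x$ in some $\{a_i, b_i\}$. By minimality of $G$, either $G'$ admits an $\eFF$-coloring $(M', F')$, or $G'$ contains a critical subgraph $B$ with $\rho_{G'}(B) \le -2$. In the latter case, each edge from $S$ to $B$ contributes $+(D+1)$ when translating $\rho_{G'}(B)$ back to $\rho_G(B)$ but saves $\rho_e = 2(D+1)$ in computing $\rho_G(S \cup B)$; combined with the bound $\rho_G(S) \le 8D+11$ (coming from $\rho(u) \le 4D+3$, $\rho(v_i) = 3D+4$, and $\rho_e = 2D+2$), one obtains $\rho_G(S \cup B) \le 8D+9 - t(D+1)$ with $t = |E(S,B)| \le 8$. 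For large $t$ this contradicts Lemma~\ref{common gap lemma} (or the minimality of $G$ when $S \cup B = V(G)$); for small $t$ the set $B$ itself is essentially a critical subgraph of $G$ (its $G$-potential differs from $\rho_{G'}(B)$ by at most $t(D+1)$), again contradicting Lemma~\ref{common gap lemma}.

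To extend $(M', F')$ to $G$, I would place $u \in F$ and, for each $v_i$, set $v_i \in M$ as an isolated component (weight $(D+1)/2 \le D+1$) whenever both $a_i, b_i \in F'$, and $v_i \in F$ otherwise. The $\eFF$-weight constraint then holds automatically. The main obstacle will be avoiding $F$-cycles when several $v_i$'s are placed in $F$: any such cycle must close through the star at $u$ via an $F'$-path joining two external neighbors. I would handle this by case analysis on the coloring types of the $a_i, b_i$, and in the remaining hard cases switch $u$ into $M$ as a singleton instead and absorb one $v_i$ into $u$'s $M$-component using $w(u) + w(v_i) \le D+1$. The capacity reductions of $(D+1)/2$ at each external neighbor are calibrated precisely so that either the extension succeeds or the minimality step produces a low-potential set suitable for the gap-lemma contradiction above.
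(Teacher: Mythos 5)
Your overall strategy (delete a neighborhood, adjust capacities, extend a coloring or extract a low-potential set) is in the right spirit, but it diverges from the paper's construction in a way that leaves a genuine gap. The paper does \emph{not} delete $\{u,v_1,\dots,v_4\}$ and tweak capacities; instead it deletes only $\{u,v_1,v_2,v_3\}$ and \emph{adds the three edges} $x_iy_i$ for $i\in\{1,2,3\}$, keeping $v_4$ in $G'$. That edge-addition trick is precisely what makes cycle-avoidance automatic: when you undo it by subdividing $x_iy_i$ into the path $x_iv_iy_i$ with $v_i\in F$, you cannot create a new $F$-cycle, because any $F'$-cycle using both $x_i$ and $y_i$ would already have been a cycle in $F'$ through the edge $x_iy_i$. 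Your version, where all of $u,v_1,\dots,v_4$ are freshly colored, faces exactly the obstruction you flag — several $v_i$'s in $F$ attached to $u$ can close a cycle through $F'$-paths between their external neighbors — and your proposed fallback does not actually close it. If $u\in M$, the weight constraint $w(u)+\sum_{v_i\in M}w(v_i)\le D+1$ forces at most one $v_i$ into $u$'s $M$-component; but your extension rule already forces $v_i\in M$ whenever both $a_i,b_i\in F'$, so the case where two distinct $v_i$ have both external neighbors in $F'$, while two others have $F'$-connected external neighbors, defeats both branches. The ``case analysis'' you defer is the hard part of the lemma, and the paper's construction is designed to make it vanish.

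The Case 2 potential arithmetic is also hand-waved in a way that doesn't survive scrutiny. With $S=\{u,v_1,\dots,v_4\}$ deleted and capacities reduced at up to eight external vertices, a low-potential $B\subseteq V(G')$ can regain up to $\alpha\cdot\tfrac{D+1}{2}=D+1$ per reduced vertex when translated to $\merho_G(B)$, and you have not shown that the edges from $S$ to $B$ (at $\merho_e=2D+2$ each) always overcome both that slack and $\merho_G(S)\le 8D+11$. The paper's calculation is clean precisely because only three added edges (each worth $2D+2$) need to be accounted for, and $t$ ranges over $\{1,2,3\}$ with $v_4\in A$ or not as a final subcase; you should rework your estimate with that structure rather than the vaguer ``essentially a critical subgraph'' claim.
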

\begin{proof} 
Suppose a $4$-vertex $u$ with $w(u)\leq{\frac{D+1}{2}}$ is adjacent to four $3$-vertices $v_1,\ldots,v_4$ in $T$.
Note that each $v_i$ has weight ${\frac{D+1}{2}}$ and is not in a multi-edge.  
For $1\leq i\leq 4$, let $N(v_i)=\{u,x_i,y_i\}$.
By assumption, $x_i\neq y_i$ for $1\leq i\leq 4$.
Recall that $ \merho(u)\leq 4D+3$ and $ \merho(v_i)= 3D+4.$

Obtain $G'$ from $G-\{u,v_1,v_2,v_3\}$ by adding edges $x_iy_i$ for $1\leq i\leq 3$.

\medskip
{\bf Case 1:} $G'$ has an $\eFF$-coloring $(M, F)$. 
If $v_4\in F$, then $(M+u, F\cup\set{v_1,v_2,v_3})$ is an $\eFF$-coloring of all of $G$, which is a contradiction. 
Suppose $v_4\in M$.
If $\set{x_4, y_4}\subseteq F$, then $(M+u, F\cup\set{v_1,v_2,v_3})$ is an $\eFF$-coloring of all of $G$, since $w(u)+w(v_4)\leq 2\frac{D+1}{2}= D+1$.
Otherwise, $(M+u-v_4, F\cup\set{v_1,v_2,v_3,v_4})$ is an $\eFF$-coloring of all of $G$.

\medskip
{\bf Case 2:} $G'$ has no $\eFF$-coloring. By the minimality of $G$, $G'$ has a vertex subset $A$ with $ \merho_{G'}(A)\leq -2$. Since $G'[A]$ is not a subgraph of $G$, it must contain at least one of the added edges. 
Let $t=\setsize{\set{i:\set{x_i,y_i}\subseteq A}}$.
By symmetry, suppose $x_1y_1, \ldots, x_ty_t \subset A$.

{\bf Case 2.1}: $t=1$. 
Then $ \merho(A \cup \{v_1\}) \leq \merho_{G'}(A)+ (2D+2)+(3D+4)-2(2D+2)\leq D ,$ which contradicts Lemma \ref{common gap lemma}.  

{\bf Case 2.2}: $t=2$. 
Then $$ \merho(A \cup \{u, v_1, v_2\})\leq \merho_{G'}(A)+
    2(2D+2)+(4D+3)+2(3D+4)-6(2D+2)\leq 2D+1,$$ which contradicts Lemma \ref{common gap lemma}.  

{\bf Case 2.3}: $t=3$. 
If $v_4\not\in A$, then 
$$ \merho(A \cup \{v_1, v_1, v_3, u\})\leq \merho_{G'}(A)+ 3(2D+2)+(4D+3)+3(3D+4)-9(2D+2)\leq D+1,$$ 
which contradicts Lemma \ref{common gap lemma}.  
If $v_4\in A$, then 
$$ \merho(A\cup \{v_1, v_1, v_3, u\} )\leq \merho_{G'}(A)+
    3(2D+2)+(4D+3)+3(3D+4)-10(2D+2)\leq -D-1\leq -2,
$$ which contradicts the choice of $G$. 
\end{proof}

\begin{lemma}\label{2-veo}
If $v$ is a $2$-vertex, then $v$ is in a multi-edge.
\end{lemma}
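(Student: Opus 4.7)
The plan is to suppose for contradiction that $v$ is a $2$-vertex with distinct neighbors $x, y$ (so $v$ is not in a multi-edge) and derive a contradiction by a minimality argument on a modified multigraph. By \Cref{2-ve}, $c(v) = 0$, so $w(v) = D+2$, which forces $v \in F$ in every $\eFF$-coloring because any $M$-component containing $v$ would have weight at least $D+2 > D+1$.

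Let $G'$ be obtained from $G - v$ by adding one additional copy of the edge $xy$, regardless of whether $xy$ is already present in $E(G)$. Since $|V(G')| < |V(G)|$, minimality applies. I would argue that any $\eFF$-coloring $(M, F)$ of $G'$ extends to one of $G$ by placing $v$ in $F$, which will contradict the criticality of $G$. If $x$ and $y$ lie in different classes of $(M, F)$, then $v$ has at most one $F$-neighbor in $G$ and we are done. If $x$ and $y$ lie in the same class, then the added $xy$ edge lies within $G'[M]$ or $G'[F]$; this is possible only when $xy \notin E(G)$, since otherwise we would have a forbidden multi-edge inside one of the two induced forests. When both lie in $M$, the extra $xy$ disappears in passing to $G-v$, splitting the $M$-component into pieces each of weight at most $D+1$, and $v$ has no $F$-neighbors so can safely be added to $F$. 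When both lie in $F$, the added $xy$ is a bridge of the forest $G'[F]$; removing it splits its $F$-component into two pieces containing $x$ and $y$ respectively, which $v$ then reconnects via the path $xvy$ without creating a cycle.

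Consequently $G'$ has no $\eFF$-coloring, so by minimality $G'$ contains an $\eFF$-critical subgraph on some vertex set $B$ with $\merho_{G'}(B) \leq \rho_* = -2$. Since $v \notin V(G')$, $B$ is a proper subset of $V(G)$, and $|B| \geq 2$ because a single vertex has potential at least $\rho_0 = 2D+1 > -2$. Passing from $G'$ back to $G$ costs at most the one added edge, of potential $\rho_e = 2(D+1)$, so $\merho_G(B) \leq \merho_{G'}(B) + \rho_e \leq 2D$. But then $B$ is a proper subset of $V(G)$ with $2 \leq |B| \leq |V(G)|-1$ and $\merho_G(B) \leq 2D < 2D+2 = \rho_0 + (2+\rho_*) + 1$, contradicting the common gap lemma (\Cref{common gap lemma}) for odd $D$.

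The only delicate point is the uniform construction $G' = (G-v) + xy$: adding a (possibly parallel) edge $xy$ is what lets the same extension argument go through regardless of whether $xy$ was originally in $E(G)$. When $xy \in E(G)$, the new parallel copy forbids $x$ and $y$ from lying in the same class of $G'$, making the extension trivial; when $xy \notin E(G)$, the bridge-swap through $v$ handles the remaining sub-cases. I expect no other real obstacle, since both the extension step and the potential bookkeeping are standard once this uniform choice of $G'$ is made.
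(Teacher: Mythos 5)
Your proof is correct and takes essentially the same approach as the paper's: construct $G' = (G - v) + xy$, observe that any $\eFF$-coloring of $G'$ extends to $G$ by placing $v$ in $F$, and then use the minimality of $G$ together with the potential increase of at most $\rho_e = 2D+2$ to contradict \Cref{common gap lemma}. You spell out the extension step in more detail (the case analysis on which class $x,y$ land in), but the core argument is identical.
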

\begin{proof} Suppose a $2$-vertex $v$ has two distinct neighbors $u_1$ and $u_2$.
Consider the multigraph $G''$ obtained from $G-v$ by adding one edge 
connecting $u_1$ with $u_2$. 
If $G''$ has an $\eFF$-coloring $(M', F')$, then we can extend it to $G$ by adding $v$ to $F'$. 
Otherwise, by the minimality of $G$, there is $A\subseteq V(G'')$ with $\merho_{G''}(A)\leq -2$.
Note that $\{u_1, u_2\}\subseteq A$. 
Observe that $\merho_G(A)\leq \merho_{G''}(A)+2D+2\leq 2D$. 
As $u \notin A$, this contradicts Lemma \ref{common gap lemma}.
\end{proof}

A consequence of Lemma \ref{2-veo} is that no vertex in $T$ is adjacent to a $2$-vertex.

\begin{proof}[Proof of Theorem \ref{thm:multi-orderSc}.(i)]

By \Cref{2-ve}, recall that $ ch(u) = (4D+5 - 2w(u))-d(u)(D+1)$ and if $d(u) \neq 3$, then $ch(u) \leq -1$.
Moreover, a direct calculation gives that if a $3$-vertex $u \notin T$, then $ch(u) \leq -1$.
If $u \in T$, then $ch(u) = 1$.

\bigskip

We use one {\bf discharging rule}:

\begin{enumerate}[({R}1)]
    \item\label{R:32o} Each vertex in $T$ sends charge $\frac{1}{3}$ to each neighbor.
\end{enumerate}

Let $ch^*(v)$ denote the final charge of a vertex $v$. 
If $v\in T$, then as $T$ is an independent set, $ch^*(v)=ch(v)-3(1/3)=0$. 
For each $v\notin T$, let $j(v)$ denote
the number of edges incident with both $v$ and a vertex in $T$. 
By (R1),
\begin{equation}\label{disch}
\mbox{\em for each $v\notin T$,
 $ch^*(v)=ch(v)+\frac{j(v)}{3}$. }
\end{equation}

{\bf Case 1}:
  $v\notin T$ and $d(v)\leq 3$. Here, 
  $$ ch^*(v)=ch(v)+\frac{j(v)}{3}\leq -1+\frac{j(v)}{3}\leq
 -1+\frac{d(v)}{3}\leq 0.
  $$
  Moreover, if $j(v) < 3$, then $ch^*(v) < 0$.
In particular, by Lemma~\ref{2-veo}, if $d(v)=2$, then $j(v)=0$ so  $ch^*(v)=-1$.

{\bf Case 2}: $d(v)=4$. 
If $w(v)=1$, then by \Cref{multi-4vx-3333o}, $j(v)\leq 3$, so 
\begin{equation}\label{di3o}
ch^*(v)=ch(v)+\frac{j(v)}{3}\leq -1+\frac{3}{3}=0.    
\end{equation}
Moreover, if $j(v) < 3$, then $ch^*(v) < 0$.
If $w(v)\geq 2$, then
\begin{equation}\label{di4o}
ch^*(v)= ch(v)+\frac{j(v)}{3}\leq 4D+5-4-4(D+1)+\frac{j(v)}{3}\leq 
-3+\frac{4}{3}= -\frac{5}{3}.
\end{equation}

{\bf Case 3}: $d(v)\geq 5$.
A direct calculation gives 
$$ch^*(v)\leq (4D+3) - d(v)(D+1-1/3)  < -D \leq -1.$$

By the discharging above, $ch^*(v)\leq 0$ for each $v\in V(G)$. 
So, if there is a set $A\subseteq V(G)$ with $\sum_{v\in A}ch^*(v)<-1$,
 then $\sum_{v\in V(G)}ch^*(v)=\merho(V(G))<-1$, and hence $\merho(V(G))\leq -2$. 
 Thus assume that
\begin{equation}\label{dis3}
  \mbox{for each $A\subseteq V(G)$,  $\quad\sum_{v\in A}ch^*(v)\geq -1.$}  
\end{equation}

In particular $d(u)\leq 4$ for every vertex $u$. 

Suppose first that $G$ has a $2$-vertex $v$.
By Case 1 above, $ch^*(v) = -1$.
Moreover, by Lemma~\ref{2-veo}, $v$ is in a multi-edge, say $vu$.
In this case $u$ has at most $d(u)-2$ neighbors in $T$, and so $ch^*(u)<0$, which yields $ch^*(\{v,u\})<-1$.

Observe that if a $3$-vertex $v$ satisfies $w(v)\geq \frac{D+5}{2}$, then 
 $ch^*(v)\leq (4D+5)-2\frac{D+5}{2}-3(D+1)+3\frac{1}{3}=-2$.
So $G$ only contains (A) vertices in $T$, (B) $3$-vertices with weight $(D+3)/2$, and (C) $4$-vertices with weight $1$.
Let $T' = V(G) - T$ be the vertices of type (B) and (C).

Recall that by Lemma~\ref{multi-4vx-3333o},
  $j(v)\leq 3$ for each $v\in T'$. 
  So, in order for~\eqref{dis3}  to hold,  we need
\begin{equation}\label{charge-edgeso}
Q(A):=\sum_{v\in A}(3-j(v)) \leq 3\quad
   \mbox{\em for each $A \subseteq T'$.}
\end{equation}

If $G[T']$ has no cycles, then the partition $(T,T')$ is an $\eFF$-coloring of $G$, so let $C=u_1u_2\ldots u_su_1$ be an $s$-cycle in $G[T']$. 
Since $j(u_i)\leq d(u_i)-2$ for
each $u_i$, and hence $Q(V(C))\geq \sum_{i=1}^s(5-d(u_i))\geq s,$
by~\eqref{charge-edgeso}
we need $s\leq 3$. We consider three cases.

\medskip
{\bf Case 1:} $s=3$. In order to satisfy~\eqref{charge-edgeso},
we need that
$d(u_i)=4$ for all
$1\leq i\leq 3$, 
all other vertices in $T'$ have degree at most $1$ in $G[T']$, $C$ is a component in $G[T']$, and each $3$-vertex in $T'$ is isolated in $G[T']$.

Hence, the components of $G[T'-u_1]$  can be isolated vertices or $K_2$s such that both vertices of a $K_2$ are $4$-vertices. This implies that
partition $(T'-u_1,T+u_1)$ is an $\eFF$-coloring of $G$ since $T$ is independent and no vertex in $T$ is in a multi-edge.

\medskip
 {\bf Case 2:} $s=2$ and there is $u_0\in T'-\{u_1,u_2\}$ with  $j(u_0)\leq 2$. 
 Then $Q(\{u_0,u_1,u_2\})\geq 3$ with equality only if
 $j(u_0)=j(u_1)=j(u_2)=2$. So, in order to satisfy~\eqref{charge-edgeso},
we also need that
$d(u_1)=d(u_2)=4$, 
all  vertices in $T'\setminus \{u_0,u_1,u_2\}$ have degree at most $1$ in $G[T']$, $C$ is a component in $G[T']$, and each $3$-vertex in $T'-u_0$ is isolated in $G[T']$. 

Then for $i\in\set{1,2}$, each component of $G[T'-u_0-u_i]$ has at most one edge. 
Therefore, if $N(u_0)\cap T\neq N(u_i)\cap T$, then $G[T+u_0+u_i]$ is a forest, and hence $(T'-u_0-u_i, T+u_0+u_i)$ is an $\eFF$-coloring of $G$,
a contradiction. Otherwise, if $N(u_0)\cap T= N(u_1)\cap T= N(u_1)\cap T=
\{x_1,x_2\}$, then
$$\merho(\{x_1,x_2,u_0,u_1,u_2\})\leq 2(3D+4)+3(4D+3)-8(2D+2)=2D+1.$$
This contradicts \Cref{common gap lemma} when $|V(G)|> 5$; and $|V(G)|> 5$ since $u_0$ has a neighbor in $T'$ that is neither $u_1$ nor $u_2$. 

 \medskip
 {\bf Case 3:} $s=2$ and $j(v)= 3$
 for each $v\in T'-\{u_1,u_2\}$. In order to satisfy~\eqref{charge-edgeso},
we also need that at least one $u_i\in V(C)$, say $u_2$, has no neighbors in $T'-u_{3-i}$. Then again, each component of $G[T'-u_1]$ has at most one edge,
 and hence $(T'-u_1, T+u_1)$ is an $\eFF$-coloring of $G$, a contradiction.
\end{proof}

\section{Concluding remarks}

1. As remarked in the introduction, we were not able to prove Theorem~\ref{thm:simple-degreeS} for $D=1$.  We conjecture that  every $\paren{\frac{9}{5},\frac{2}{5}}$-sparse simple graph has an $\ndFF{1}$-coloring.

2. It would be interesting to prove an analog of Theorem~\ref{thm:multi-orderS} for simple graphs.

\section*{Acknowledgment}

The authors thank Daniel Cranston for  helpful discussions and comments.

\end{document}